\theoremstyle{plain}
\newtheorem{theorem}{Theorem}[section]
\newtheorem{lemma}[theorem]{Lemma}
\newtheorem{corollary}[theorem]{Corollary}
\theoremstyle{definition}
\newtheorem{definition}[theorem]{Definition}
\newtheorem{notation}[theorem]{Notation}
\newtheorem{remark}[theorem]{Remark}
\begin{document}
\title[Radical factorization]{Radical factorization in commutative rings, monoids and multiplicative lattices}

\author[Bruce Olberding]{Bruce Olberding}
\address{Department of Mathematical Sciences, New Mexico State University, Las Cruces, NM 88003, United States}
\email{bruce@nmsu.edu}

\author[Andreas Reinhart]{Andreas Reinhart}
\address{Department of Mathematical Sciences, New Mexico State University, Las Cruces, NM 88003, United States}
\email{andreas.reinhart@uni-graz.at}

\thanks{The research of the second-named author of this work was supported by the Austrian Science Fund FWF, Project Number J4023-N35}
\subjclass[2000]{13A15, 13F05, 20M12, 20M13}
\keywords{radical factorization, monoid, ideal system, complete multiplicative lattice, SP-monoid}

\begin{abstract}
In this paper we study the concept of radical factorization in the context of abstract ideal theory in order to obtain a unified approach to the theory of factorization into radical ideals and elements in the literature of commutative rings, monoids and ideal systems. Using this approach we derive new characterizations of classes of rings whose ideals are a product of radical ideals, and we obtain also similar characterizations for classes of ideal systems in monoids and star ideals in integral domains.
\end{abstract}

\maketitle

\section{Introduction}

This article is concerned with the factorization of ideals in commutative rings and monoids into products of radical ideals. Much is known about the integral domains, rings and cancellative monoids whose ideals possess this factorization property; see \cite{AD,FHL,HOR,MC,Olb,R,V,Y} and their references. While by many measures, radical factorization is quite a bit weaker than prime factorization, it is still the case that
a ring or monoid whose ideals have radical factorization must meet a number of strong demands, as is evidenced in the characterizations in the cited references. However, one also finds factorization into radical ideals among special subclasses of ideals of rings and monoids. Rather than require all ideals to have the radical factorization property, we thus can consider restricted classes of ideals. This is analogous to the passage from Dedekind domains to Krull domains: The property that every proper ideal of a domain is a product of prime ideals characterizes Dedekind domains, and hence is rather restrictive. Taking a more flexible approach and working up to divisorial closure, we have the familiar property of Krull domains that divisorial ideals factor into prime ideals up to divisorial closure; i.e., every proper divisorial ideal $I$ is of the form $I=((P_1\cdots P_n)^{-1})^{-1}$ for some height $1$ prime ideals $P_1,\ldots,P_n$. Thus by working with a restricted class of ideals and a more flexible interpretation of product we find Dedekind factorization outside the class of Dedekind domains.

Our goal in this article is to show that the radical factorization property also can be found in more general settings by suitably restricting the ideals considered and having a more flexible notion of product. In fact, our methods allow us to work with both ideal systems of commutative rings as well as monoids. Rather than develop ad hoc approaches to each of these different settings, we give a unified treatment through the use of multiplicative lattices. The collections of ideals that we will be interested in (in both the ring and monoid settings) can be viewed in an obvious way as a lattice having a multiplicative structure. On a more philosophical level, this approach shows that the phenomenon of radical factorization, at least to the extent that we consider it here, is a consequence of the arithmetic of the ideals of the ring, monoid or ideal system, rather than the elements in these ideals, i.e., our analysis of these properties involves quantification over ideals rather than elements.
As we recall in Section 2, multiplicative lattices have been well studied by many authors, and so there are a number of tools available for our purposes.

Thus we develop first in Sections 2--6 a theory of radical factorization for multiplicative lattices and use the results obtained in this fashion to derive in Sections 7 and 8 a number of results and characterizations of radical factorization in commutative rings, monoids and ideal systems.

Throughout the paper we assume all rings, monoids and semigroups are commutative and have more than one element.

\section{Multiplicative lattices}

Our methods in this paper involve those of abstract ideal theory, and so our main tool is that of a multiplicative lattice.

\begin{definition}\label{Def 2.1} A {\it multiplicative lattice} is a partially ordered multiplicative monoid $(\mathcal{L},\leq)$ with the following properties.
\begin{enumerate}
\item[(a)] $(\mathcal{L},\leq)$ is a complete lattice, and hence has a top element $1$ and a bottom element $0$.
\item[(b)] $x(\bigvee_{y\in S}y)=\bigvee_{y\in S} xy$ for each $x\in\mathcal{L}$ and $S\subseteq\mathcal{L}$.
\item[(c)] The top element $1$ is the multiplicative identity of $\mathcal{L}$.
\end{enumerate}
If also $ab=0$ implies $a$ or $b$ is $0$ for all $a,b\in\mathcal{L}$, then $\mathcal{L}$ is a {\it multiplicative lattice domain.}
\end{definition}

An element $x$ of a lattice $\mathcal{L}$ is {\it compact} if whenever $x\leq\bigvee_\alpha y_\alpha$ for some collection $\{y_\alpha\}$ of elements of $\mathcal{L}$, we have that $x\leq y_{\alpha_1}\vee\cdots\vee y_{\alpha_n}$ for some $\alpha_1,\ldots,\alpha_n$. Let $\mathcal{L}^*$ denote the set of compact elements of $\mathcal{L}$.

\begin{definition}\label{Def 2.2} A multiplicative lattice $\mathcal{L}$ is a {\it $C$-lattice} if the set $\mathcal{L}^*$ of compact elements is multiplicatively closed (i.e., $1\in\mathcal{L}^*$ and $xy\in\mathcal{L}^*$ for all $x,y\in\mathcal{L}^*$) and every element in $\mathcal{L}$ is a join of compact elements. The argument in \cite[Lemma 1]{JJA} shows that a multiplicative lattice $\mathcal{L}$ is a $C$-lattice if and only if there is some multiplicatively closed set $A\subseteq\mathcal{L}^*$ such that every element of $\mathcal{L}$ is a join of elements from $A$.
\end{definition}

\begin{notation}\label{Not 2.3} Let $\mathcal{L}$ be a multiplicative lattice, and let $x,y\in\mathcal{L}$. We use the following notation.
\begin{enumerate}
\item $(y:x)=\bigvee\{a\in\mathcal{L}\mid ax\leq y\}.$
\item $\sqrt{x}=\bigvee\{y\in\mathcal{L}\mid y^n\leq x$ for some $n\in\mathbb{N}\}$.
\item $\mbox{\rm Max}(\mathcal{L}) = $ the set of maximal elements of $\mathcal{L}$.
\end{enumerate}
\end{notation}

An element $x$ is {\it $\ell$-radical} if $x=\sqrt{x}$. In the literature on multiplicative lattices, an $\ell$-radical element is called simply a radical element, but we use the term $\ell$-radical since there exists the different notion of radical elements in monoids (i.e., an element of a monoid is called radical if the ideal generated by it is a radical ideal). To avoid similar confusion, we use the terms $\ell$-principal and $\ell$-invertible in the next definition in place of what are called principal and invertible elements in the context of multiplicative lattices.

One motivation for consideration of multiplicative lattices is that these structures capture fundamental properties of ideals of commutative rings. The notion of a principal element in a multiplicative lattice, first introduced by Dilworth (see \cite{Dil} for an overview of the history of this notion), plays a role similar to that of finitely generated locally principal ideals in commutative rings (see \cite{AJo} and \cite[Theorem 2]{McC} for more on this). Weaker versions of principality also prove useful since they encode familiar properties such as being a multiplicative or cancellative element (see~\cite{And}).

\begin{definition}\label{Def 2.4} Let $\mathcal{L}$ be a multiplicative lattice, and let $x\in\mathcal{L}$.
\begin{enumerate}
\item $x$ is {\it cancellative} if it is a cancellative element of the monoid (i.e., $xy=xz$ implies $y=z$ for all $y,z\in\mathcal{L}$ or equivalently $xy\leq xz$ implies $y\leq z$ for all $y,z\in\mathcal{L}$).
\item $x$ is {\it weak meet principal} if $x\wedge y=(y:x)x$ for all $y\in\mathcal{L}$.
\item $x$ is {\it meet principal} if $y\wedge zx=\left((y:x)\wedge z\right)x$ for all $y,z\in\mathcal{L}$.
\item $x$ is {\it weak join principal} if $(xy:x)\leq y\vee (0:x)$ for all $y\in\mathcal{L}$.
\item $x$ is {\it join principal} if $y\vee (z:x)=((yx\vee z):x)$ for all $y,z\in\mathcal{L}$.
\item $x$ is {\it $\ell$-principal} if it is both meet and join principal.
\item $x$ is {\it $\ell$-invertible} if $x$ is $\ell$-principal and cancellative.
\end{enumerate}
The lattice $\mathcal{L}$ is {\it principally generated} if each element is a join of $\ell$-principal elements.
\end{definition}

Note that in a multiplicative lattice domain, every nonzero $\ell$-principal element is cancellative and hence is $\ell$-invertible.

The next lemma collects several useful properties of $\ell$-principal and $\ell$-invertible elements; see \cite[Lemma~2.3]{AJ} and \cite[Corollary 3.3]{Dil}.

\begin{lemma}\label{Lem 2.5}
Let $\mathcal{L}$ be a multiplicative lattice, and let $x,y\in\mathcal{L}$.
\begin{enumerate}
\item If $x,y$ are $\ell$-principal, then $xy$ is $\ell$-principal.
\item $xy$ is $\ell$-invertible if and only if $x$ and $y$ are $\ell$-invertible.
\item $1\in\mathcal{L}$ is $\ell$-invertible.
\end{enumerate}
\end{lemma}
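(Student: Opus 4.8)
The plan is to reduce everything to a few elementary facts about residuals and cancellativity. First I would record the \emph{adjunction identities}: by Notation~\ref{Not 2.3}(1) together with Definition~\ref{Def 2.1}(b) (multiplication distributes over arbitrary joins), one has $a\le(u:x)$ if and only if $ax\le u$, hence $(u:x)x\le u$ and $(u:xy)=((u:x):y)=((u:y):x)$ for all $a,u,x,y\in\mathcal L$. I would also note that $w\vee(z:x)\le((wx\vee z):x)$ always holds, so that to prove an element $p$ is join principal it suffices to show $((wp\vee z):p)\le w\vee(z:p)$ for all $w,z$. Finally, if $y$ is cancellative then $(wy:y)=w$ for every $w$, since $a\le(wy:y)$ means $ay\le wy$, which by cancellativity is equivalent to $a\le w$.

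Part (3) is immediate, since $(y:1)=y$ and $z\cdot1=z$ reduce the meet and join principal identities to tautologies, and $1$ is trivially cancellative. For part (1), assume $x,y$ are $\ell$-principal and fix $w,z\in\mathcal L$. Applying meet principality of $y$ to $w$ with coefficient $zx$, then meet principality of $x$ to $(w:y)$ with coefficient $z$, and finally the adjunction identity, gives
\[
w\wedge z(xy)=\big((w:y)\wedge zx\big)y=\big(((w:y):x)\wedge z\big)(xy)=\big((w:xy)\wedge z\big)(xy),
\]
so $xy$ is meet principal. Similarly, writing $w(xy)=(wy)x$ and applying join principality of $x$ and then of $y$, one obtains
\[
\big((w(xy)\vee z):xy\big)=\big((wy\vee(z:x)):y\big)=w\vee((z:x):y)=w\vee(z:xy),
\]
so $xy$ is join principal; hence $xy$ is $\ell$-principal.

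For part (2), if $x,y$ are $\ell$-invertible then $xy$ is $\ell$-principal by part (1), and $xy$ is cancellative because a product of cancellative elements is cancellative; thus $xy$ is $\ell$-invertible. Conversely, suppose $xy$ is $\ell$-invertible. Then $x$ and $y$ are cancellative: $xu=xv$ forces $(xy)u=(xy)v$, hence $u=v$, and symmetrically for $y$. It remains to deduce that $x$ (and by symmetry $y$) is $\ell$-principal from the $\ell$-principality of $xy$ and cancellativity of $y$. For meet principality, I would compute $wy\wedge z(xy)$ in two ways: meet principality of $xy$ together with $(wy:xy)=((wy:y):x)=(w:x)$ gives $wy\wedge z(xy)=\big(((w:x)\wedge z)x\big)y$; on the other hand $((w:x)\wedge z)x\le w\wedge zx$ (using $(w:x)x\le w$) and $(w\wedge zx)y\le wy\wedge z(xy)$, so $wy\wedge z(xy)=(w\wedge zx)y$, and cancelling $y$ yields the meet principal identity for $x$. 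For join principality, put $t=((wx\vee z):x)$, so $tx\le wx\vee z$; multiplying by $y$ gives $t(xy)\le w(xy)\vee zy$, whence, by join principality of $xy$ and $(zy:xy)=(z:x)$,
\[
t\le\big((w(xy)\vee zy):xy\big)=w\vee(zy:xy)=w\vee(z:x),
\]
which is exactly join principality of $x$. Hence $x$ and $y$ are $\ell$-principal and cancellative, i.e., $\ell$-invertible.

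The bookkeeping with residuals in parts (1) and (3) is routine; the one step that requires an idea is the converse of (2), where one must multiply the relevant inequalities by $y$ in order to pull the principal-element identities for $xy$ back down to $x$, cancellativity of $y$ then being exactly what is needed to finish.
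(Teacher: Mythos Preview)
Your argument is correct in every part: the residual adjunction identities you record are valid, the computations for part~(1) are the standard ones, and in the converse of part~(2) the trick of multiplying by $y$ and then cancelling it is exactly what makes the argument go through. In particular, your meet-principal computation $wy\wedge z(xy)=\big(((w:x)\wedge z)x\big)y\le(w\wedge zx)y\le wy\wedge z(xy)$ and your join-principal computation via $t(xy)\le w(xy)\vee zy$ are both sound.

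The only ``comparison'' to make is that the paper does not actually give a proof of this lemma at all; it simply cites \cite[Lemma~2.3]{AJ} and \cite[Corollary~3.3]{Dil}. So you have supplied a self-contained elementary argument where the paper defers to the literature. Your proof is in fact essentially the same as the one in the cited sources (the converse of (2) in particular is the content of \cite[Lemma~2.3(b)]{AJ}), so nothing genuinely new is happening, but it is useful to have the details spelled out in one place.
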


As we recall next, the fact that a $C$-lattice has a good supply of compact elements allows for a localization theory that behaves like that of commutative rings.

\begin{notation}\label{Not 2.6} Let $\mathcal{L}$ be a $C$-lattice, and let $p\in\mathcal{L}$ be $\ell$-prime (i.e., $p\not=1$ and for all $a,b\in\mathcal{L}$, $ab\leq p$ implies $a\leq p$ or $b\leq p$). We use the terminology of $\ell$-prime elements to distinguish these elements from the prime elements in monoids. For each $x\in\mathcal{L}$, we set \begin{center} $x_p=\bigvee\{a\in\mathcal{L}^*\mid\exists b\in\mathcal{L}^*$ such that $b\not\leq p$ and $ab\leq x\},$\end{center}
and we let $\mathcal{L}_p=\{x_p\mid x\in\mathcal{L}\}$.
\end{notation}

\begin{lemma}\label{Lem 2.7}{\em (cf.~\cite[pp.~201--203]{JJSP})} Let $\mathcal{L}$ be a $C$-lattice, let $x,y\in\mathcal{L}$ and let $p\in\mathcal{L}$ be an $\ell$-prime element.
\begin{enumerate}
\item $x_p=1$ if and only if $x\not\leq p$.
\item $(xy)_p=(x_py_p)_p$.
\item $(x\wedge y)_p=x_p\wedge y_p$.
\item If $p\in\mbox{\rm Max}(\mathcal{L})$, then $(p^n)_p=p^n$ for all $n\in\mathbb{N}$.
\item $x=\bigwedge_{m\in\mbox{\rm Max}(\mathcal{L})} x_m$ and $x=y$ if and only if $x_m=y_m$ for all $m\in\mbox{\rm Max}(\mathcal{L})$.
\item If $x$ is compact, then $(y:x)_p=(y_p:x_p)$.
\item If $x$ is both weak meet and weak join principal, then $x$ is compact.
\item $\sqrt{x}=\bigwedge\{q\in\mathcal{L}\mid q $ is $\ell$-prime with $x\leq q\}$.
\item $\sqrt{x_p}=\sqrt{x}_p$.
\end{enumerate}
\end{lemma}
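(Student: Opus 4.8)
The plan is to reduce each clause to manipulations with \emph{compact} elements, using three standing features of a $C$-lattice $\mathcal{L}$ that I would record first: (i) every element is the join of the compact elements below it, and $\mathcal{L}^*$ is closed under finite joins and products and contains $1$; (ii) the map $x\mapsto x_p$ is order-preserving, satisfies $x\leq x_p$ (take $b=1$, allowed since $p\neq 1$), and is idempotent, $(x_p)_p=x_p$ --- for the last point, if $c\in\mathcal{L}^*$ and $c\leq(x_p)_p$, pick $b\in\mathcal{L}^*$ with $b\not\leq p$ and $cb\leq x_p$, then by compactness $cb\leq c_1\vee\dots\vee c_n$ with $c_id_i\leq x$ and $d_i\in\mathcal{L}^*$, $d_i\not\leq p$, so $d=d_1\cdots d_n\in\mathcal{L}^*$ satisfies $d\not\leq p$ and $cbd\leq x$, whence $c\leq x_p$ with witness $bd$; (iii) since $p$ is $\ell$-prime, the compact elements not below $p$ form a multiplicatively closed set, and every maximal element is $\ell$-prime (if $m$ is maximal, $ab\leq m$ and $a\not\leq m$, then $a\vee m=1$, so $b=b(a\vee m)=ba\vee bm\leq m$). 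I would also note the elementary identity $(y:x)x\leq y$, which follows from the distributive law~(b).

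Granting (i)--(iii), clauses (1), (2), (3), (6) and (9) are direct. For (1): if $x\not\leq p$, a compact $c\leq x$ with $c\not\leq p$ gives $1\cdot c\leq x$, so $x_p=1$; if $x\leq p$, every pair $(a,b)$ in the join defining $x_p$ has $ab\leq p$ with $b\not\leq p$, so $a\leq p$ and $x_p\leq p\neq 1$. For (2), (3), (6), (9) the scheme is always the same: to show $u\leq w$, take a compact $c\leq u$, produce a compact $b\not\leq p$ with $cb\leq$ (the appropriate element), conclude $c\leq w$, and pass to the join over all such $c$. Thus for (9): if $c\in\mathcal{L}^*$ and $c^n\leq x_p$, then $c^nb\leq x$ for some compact $b\not\leq p$, so $(cb)^n=c^nb^n\leq c^nb\leq x$ and $c\leq(\sqrt{x})_p$; conversely if $cb\leq\sqrt{x}$ then $(cb)^m\leq x$, so $c^mb^m\leq x$ with $b^m\not\leq p$, giving $c\leq\sqrt{x_p}$. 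I would prove (9) in this direct way rather than by combining (8) with (3), since $x\mapsto x_p$ is not known to commute with \emph{infinite} meets, so the naive reduction is unavailable. Clause (2) additionally uses~(b) to write $x_py_p$ as a join of products $ab$ of compacts $a\leq x_p$, $b\leq y_p$ and then $(ab)(a'b')\leq xy$; clause (6) uses~(2) for one inclusion and the compactness of $x$ (to know $cx$ is compact) for the other.

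Clause (4) is the only one needing maximality of $p$ rather than mere $\ell$-primeness. Given a compact $c\leq(p^n)_p$, choose a compact $b\not\leq p$ with $cb\leq p^n$; maximality gives $p\vee b=1$, hence $1=(p\vee b)^n=\bigvee_{i=0}^n b^ip^{n-i}$ by~(b), and multiplying by $c$, each term satisfies $cb^ip^{n-i}\leq p^n$ (use $cb\leq p^n$ when $i\geq 1$ and $cp^n\leq p^n$ when $i=0$), so $c\leq p^n$. Clauses (5) and (8) need a supply of prime elements, furnished by Zorn's lemma together with the compactness of $1$ (and of the powers $c^n$): a chain in the relevant poset cannot have join $1$, respectively join $\geq c^n$. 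For (5): a compact $c\not\leq x$ satisfies $(x:c)\neq 1$ (else $c=(x:c)c\leq x$), so $(x:c)\leq m$ for some maximal $m$; if $c\leq x_m$ we would get a compact $b\not\leq m$ with $cb\leq x$, so $b\leq(x:c)\leq m$, a contradiction --- hence $c\not\leq x_m$, and $\bigwedge_m x_m\leq x$ follows. For (8), $\sqrt{x}\leq\bigwedge\{q\mid q\text{ is }\ell\text{-prime},\ x\leq q\}$ is immediate from $\ell$-primeness; conversely, if $c\in\mathcal{L}^*$ and $c^n\not\leq x$ for all $n$, a Zorn argument gives $q\geq x$ maximal with $c^n\not\leq q$ for all $n$, and the standard computation ($a,b\not\leq q$ forces $c^s\leq q\vee a$, $c^t\leq q\vee b$, so $c^{s+t}\leq(q\vee a)(q\vee b)\leq q$) shows $q$ is $\ell$-prime, contradicting $c\leq\bigwedge\{q\}$; so $c\leq\sqrt{x}$.

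Finally, (7) follows from a further argument exploiting the compactness of $1$, once one observes that a $C$-lattice is an algebraic lattice and hence meet-continuous, $a\wedge\bigvee D=\bigvee_{d\in D}(a\wedge d)$ for directed $D$ (a compact element below $a\wedge\bigvee D$ lies below $a$ and below some $d\in D$). If $x$ is weak meet and weak join principal and $x\leq\bigvee_{i\in I}c_i$ with each $c_i$ compact, put $d_J=\bigvee_{i\in J}c_i$ for finite $J\subseteq I$; then $x=x\wedge\bigvee_J d_J=\bigvee_J(x\wedge d_J)=\bigvee_J(d_J:x)x=vx$, where $v=\bigvee_J(d_J:x)$, using meet-continuity, weak meet principality, and~(b). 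Hence $(vx:x)=(x:x)=1$, and weak join principality yields $1=(vx:x)\leq v\vee(0:x)=v$ (the last step because $J=\emptyset$ contributes $(0:x)$ to $v$); since $1$ is compact and $\{(d_J:x)\}_J$ is directed, $(d_{J_0}:x)=1$ for some finite $J_0$, whence $x=(d_{J_0}:x)x\leq d_{J_0}=\bigvee_{i\in J_0}c_i$, so $x$ is compact. Overall the routine part is the compact-element bookkeeping in (1)--(3), (6), (9); the points that need genuine care are the binomial expansion in (4), the two Zorn arguments in (5) and (8), the meet-continuity input for (7), and --- the most easily overlooked --- the fact that (9) must be argued directly because localization need not commute with infinite meets.
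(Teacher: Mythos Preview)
The paper does not actually prove this lemma: it merely records the nine assertions with a pointer to \cite[pp.~201--203]{JJSP} and moves on. Your write-up therefore supplies what the paper omits, and the arguments you give are correct. The reductions to compact elements in (1)--(3), (6), (9), the binomial expansion for (4), and the two Zorn arguments for (5) and (8) are exactly the standard proofs one finds in the multiplicative-lattice literature; your explicit warning that (9) cannot be obtained from (8) and (3) because $(-)_p$ is only known to commute with \emph{finite} meets is a good point that is easy to miss. For (7) your route via meet-continuity of algebraic lattices is clean and correct; the only cosmetic point is that you should begin with an arbitrary cover $x\leq\bigvee_\alpha y_\alpha$ and first refine it to a cover by compacts (using that each $y_\alpha$ is a join of compacts) before running your argument, so that the finite subcover you produce for the compacts can be regrouped into a finite subcover for the original $y_\alpha$'s. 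With that trivial adjustment, your proof is complete and self-contained, whereas the paper simply defers to \cite{JJSP}.
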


\begin{lemma}\label{Lem 2.8} Let $\mathcal{L}$ be a C-lattice, $x\in\mathcal{L}$ and $p\in\mathcal{L}$ an $\ell$-prime element which is minimal above $x$. Then $\sqrt{x_p}=p$.
\end{lemma}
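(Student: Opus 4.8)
The plan is to prove the inequalities $\sqrt{x_p}\leq p$ and $p\leq\sqrt{x_p}$ in turn. The first is immediate: if $a,b\in\mathcal{L}^{*}$ satisfy $b\not\leq p$ and $ab\leq x\leq p$, then $\ell$-primeness of $p$ forces $a\leq p$, so $x_p$, being the join of all such $a$, satisfies $x_p\leq p$; since $p$ is $\ell$-prime it is $\ell$-radical (if $y^{n}\leq p$ then $y\leq p$), i.e.\ $\sqrt{p}=p$, and as $\sqrt{\,\cdot\,}$ is clearly order preserving we obtain $\sqrt{x_p}\leq\sqrt{p}=p$.

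For the reverse inequality I would argue by contradiction, assuming $p\not\leq\sqrt{x_p}$. Since $\mathcal{L}$ is a $C$-lattice, $p$ is the join of the compact elements it dominates, so there is some $c\in\mathcal{L}^{*}$ with $c\leq p$ and $c\not\leq\sqrt{x_p}$; by definition of $\sqrt{\,\cdot\,}$ this means $c^{n}\not\leq x_p$ for all $n\geq 1$, and moreover $1\not\leq x_p$ since $x_p\leq p\neq 1$. A short unwinding of the definition of $x_p$ is the key technical step: because $c^{n}$ is compact, $c^{n}\leq x_p$ holds iff $c^{n}\leq a_{1}\vee\dots\vee a_{k}$ for finitely many $a_{i}\in\mathcal{L}^{*}$ each admitting a $b_{i}\in\mathcal{L}^{*}$ with $b_{i}\not\leq p$ and $a_{i}b_{i}\leq x$, and then $b:=b_{1}\cdots b_{k}\in\mathcal{L}^{*}$ satisfies $b\not\leq p$ (as $p$ is $\ell$-prime) and $c^{n}b\leq\bigvee_{i}a_{i}b\leq x$; conversely $c^{n}b\leq x$ with $b\in\mathcal{L}^{*}$, $b\not\leq p$ gives $c^{n}\leq x_p$. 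Consequently $c^{n}b\not\leq x$ for every $n\geq 0$ and every $b\in\mathcal{L}^{*}$ with $b\not\leq p$.

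Now I would set $S=\{\,c^{n}b : n\geq 0,\ b\in\mathcal{L}^{*},\ b\not\leq p\,\}$. Then $S\subseteq\mathcal{L}^{*}$, $1\in S$, the set $S$ is multiplicatively closed (again using $\ell$-primeness of $p$), and by the previous step no element of $S$ lies below $x$. A standard Zorn's lemma argument applies to $\Sigma=\{\,r\in\mathcal{L} : x\leq r\text{ and }s\not\leq r\text{ for all }s\in S\,\}$: it contains $x$, and the join of a chain in $\Sigma$ again lies in $\Sigma$ because the elements of $S$ are compact, so $\Sigma$ has a maximal element $q$. The usual verification shows $q$ is $\ell$-prime: $q\neq 1$ since $1\in S$; and if $ab\leq q$ with $a\not\leq q$ and $b\not\leq q$, then maximality gives $s_{1},s_{2}\in S$ with $s_{1}\leq q\vee a$ and $s_{2}\leq q\vee b$, whence $s_{1}s_{2}\leq(q\vee a)(q\vee b)\leq q$, contradicting $q\in\Sigma$. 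Since $S$ contains every compact element not below $p$ (take $n=0$), every compact element below $q$ lies below $p$; as $q$ is a join of compact elements, $q\leq p$. Finally $c\in S$ gives $c\not\leq q$, so $q\neq p$ and hence $q<p$, and thus $q$ is an $\ell$-prime element with $x\leq q<p$, contradicting the minimality of $p$ above $x$. Therefore $p\leq\sqrt{x_p}$, and combined with the first part $\sqrt{x_p}=p$.

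The step I expect to be the main obstacle is choosing $S$ so that it captures exactly the right information: the translation $c^{n}\not\leq x_p\iff(c^{n}b\not\leq x$ for all compact $b\not\leq p)$ must be handled carefully, and the reason all compact $b\not\leq p$ are thrown into $S$ is precisely to force the $\ell$-prime $q$ produced by Zorn's lemma to satisfy $q\leq p$, so that it really contradicts minimality of $p$ rather than merely producing some unrelated minimal prime above $x$. An alternative would be to run the argument inside the localized $C$-lattice $\mathcal{L}_p$, in which $p$ is the unique maximal element, and to show that $p$ is then the only $\ell$-prime of $\mathcal{L}_p$ above $x_p$; but the direct argument above avoids developing $\mathcal{L}_p$.
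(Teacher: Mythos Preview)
Your proof is correct and follows essentially the same route as the paper's. Both arguments use the multiplicatively closed set of compact elements $\{c^{n}b : n\geq 0,\ b\in\mathcal{L}^{*},\ b\not\leq p\}$ and produce, via prime avoidance, an $\ell$-prime $q$ with $x\leq q\leq p$ that omits $c$; the paper states the existence of $q$ as a known fact about $C$-lattices and finishes by invoking minimality to get $q=p$ (whence $c\not\leq q=p$ is the contradiction), whereas you spell out the Zorn argument and phrase the contradiction as $q<p$, but these are only cosmetic differences.
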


\begin{proof} It is clear that $\sqrt{x_p}\leq p$. Let $a\in\mathcal{L}^*$ be such that $a\leq p$. Set $\Omega=\{ba^n\mid b\in\mathcal{L}^*,b\not\leq p,n\in\mathbb{N}_0\}$. It suffices to show that there is some $z\in\Omega$ such that $z\leq x$.

Assume that $z\not\leq x$ for each $z\in\Omega$. Since $\Omega$ is a multiplicatively closed set of compact elements of $\mathcal{L}$, there is some $\ell$-prime $q\in\mathcal{L}$ such that $x\leq q$ and $z\not\leq q$ for each $z\in\Omega$. Note that if $c\in\mathcal{L}^*$ is such that $c\not\leq p$, then $c\in\Omega$, and hence $c\not\leq q$. Therefore, $x\leq q\leq p$, and hence $a\leq p=q$. Since, $a\in\Omega$, we have that $a\not\leq q$, a contradiction.
\end{proof}

\begin{definition}\label{Def 2.9} If $\mathcal{L}$ is a multiplicative lattice and the length of the longest chain of $\ell$-prime elements is $n$, then the {\it dimension} of $\mathcal{L}$ is $n-1$.
\end{definition}

We will be mainly interested in zero-dimensional elements of $\mathcal{L}$, i.e., those elements $x$ for which the only $\ell$-prime elements above $x$ are maximal.

A lattice $\mathcal{L}$ is {\it modular} if for all $x,y,z\in\mathcal{L}$ such that $x\leq z$ it follows that $(x\vee y)\wedge z=x\vee (y\wedge z)$ (equivalently, for all $x,y,z\in\mathcal{L}$ such that $x\leq z$ we have that $(x\vee y)\wedge z\leq x\vee (y\wedge z)$.
In Section 8, whether a multiplicative lattice is modular is a key issue for determining the ideal systems to which our methods can be applied. The relevance of the modularity condition is due to the following lemma.

\begin{lemma}\label{Lem 2.10}{\em (cf.~\cite[Proposition 1.1]{And})} Let $\mathcal{L}$ be a multiplicative lattice and $x\in\mathcal{L}$ a cancellative element.
\begin{enumerate}
\item $x$ is weak join principal.
\item If $x$ is weak meet principal, then $x$ is meet principal.
\item If $\mathcal{L}$ is modular and $x$ is weak meet principal, then $x$ is $\ell$-principal.
\end{enumerate}
In particular, if $\mathcal{L}$ is modular, then an element of $\mathcal{L}$ is $\ell$-invertible if and only if it is weak meet principal and cancellative.
\end{lemma}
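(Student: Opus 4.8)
The plan is to prove the three parts of the lemma in order, since part (2) will lean on the cancellativity consequence $(0:x)=0$ extracted in part (1), and part (3) will use both (1) and (2); the concluding "in particular" is then a one-line consequence. Throughout I would use repeatedly that multiplication is order-preserving and distributes over arbitrary joins (Definition~\ref{Def 2.1}(b)), that $(a:x)x\le a$ for all $a\in\mathcal{L}$, and that cancellativity of $x$ means $ax\le bx\Rightarrow a\le b$. For part (1): first note $(0:x)=\bigvee\{a\mid ax\le 0\}=0$, because $ax=0=0x$ forces $a=0$; hence it is enough to check $(xy:x)\le y$, and this follows from $(xy:x)x=\bigvee\{ax\mid ax\le xy\}\le xy=yx$ by cancelling $x$, giving $(xy:x)\le y\le y\vee(0:x)$.

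For part (2), the inequality $((y:x)\wedge z)x\le y\wedge zx$ holds in any multiplicative lattice. For the reverse I would set $u=y\wedge zx$ and exploit that $z\le 1$ forces $zx\le x$, so $u\le x$ and $u=x\wedge u$. Weak meet principality applied to $u$ then gives $u=x\wedge u=(u:x)x$. From $u\le y$ we get $(u:x)\le(y:x)$, and from $(u:x)x=u\le zx$ together with cancellativity we get $(u:x)\le z$; therefore $u=(u:x)x\le((y:x)\wedge z)x$, which closes the gap and shows $x$ is meet principal.

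For part (3), parts (1) and (2) already make $x$ weak join principal and meet principal, so it remains to show $x$ is join principal, i.e. $((yx\vee z):x)=y\vee(z:x)$. The inequality $\ge$ is automatic. For $\le$ I would put $w=((yx\vee z):x)$, so $wx\le yx\vee z$ and $y\le w$, whence $yx\le wx$ --- and this is precisely the hypothesis under which the modular law applies: $(yx\vee z)\wedge wx=yx\vee(z\wedge wx)$, the left-hand side being $wx$ because $wx\le yx\vee z$. So $wx=yx\vee(z\wedge wx)$; using meet principality to rewrite $z\wedge wx=((z:x)\wedge w)x$ and cancelling $x$ yields $w=y\vee((z:x)\wedge w)\le y\vee(z:x)$. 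Thus $x$ is join principal, hence $\ell$-principal. Finally, meet principality specializes (at $z=1$) to weak meet principality, so an $\ell$-invertible element is weak meet principal and cancellative, and conversely when $\mathcal{L}$ is modular parts (2) and (3) upgrade a weak meet principal cancellative element to an $\ell$-principal cancellative one. I expect the modularity step in part (3) to be the crux: everything turns on recognizing that $y\le w$ supplies the comparability $yx\le wx$ needed to invoke modularity, after which meet principality and cancellativity do the rest.
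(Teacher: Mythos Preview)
Your proof is correct. The paper's argument follows the same outline but organizes parts (2) and (3) a bit differently. For (2), instead of working directly with $u=y\wedge zx$, the paper first isolates the identity $yx\wedge zx=(y\wedge z)x$ (proved exactly as you do, via weak meet principality plus cancellation) and then obtains meet principality in one line: $y\wedge zx=(y\wedge x)\wedge zx=((y:x)x)\wedge zx=((y:x)\wedge z)x$. For (3), the paper bypasses your auxiliary $w=((yx\vee z):x)$ and simply multiplies the desired identity through by $x$: using modularity with the comparison $yx\le x$ (rather than your $yx\le wx$) and only \emph{weak} meet principality, it computes $(y\vee(z:x))x=yx\vee(z\wedge x)=(yx\vee z)\wedge x=((yx\vee z):x)x$, and then cancels $x$. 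Your route through the full meet principality of (2) is equally valid; the paper's version is just a touch more compact.
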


\begin{proof} (1) If $y\in\mathcal{L}$, then $(xy:x)=y\leq y\vee (0:x)$. Therefore, $x$ is weak join principal.

(2) Let $x$ be weak meet principal. First we show that $yx\wedge zx=(y\wedge z)x$ for each $y,z\in\mathcal{L}$. Let $y,z\in\mathcal{L}$. Since $x$ is weak meet principal and $yx\wedge zx\leq x$, there is some $a\in\mathcal{L}$ such that $yx\wedge zx=ax$. We infer that $ax\leq yx$ and $ax\leq zx$. Therefore, $a\leq y\wedge z$, and hence $xy\wedge xz\leq x(y\wedge z)\leq xy\wedge xz$.

Now let $y,z\in\mathcal{L}$. Then $y\wedge zx=(y\wedge x)\wedge zx=((y:x)x)\wedge zx=((y:x)\wedge z)x$.

(3) Let $\mathcal{L}$ be modular and let $x$ be weak meet principal. By (2) it remains to show that $x$ is join principal. Let $y,z\in\mathcal{L}$. Note that $yx\leq x$, and hence $(y\vee(z:x))x=yx\vee (z:x)x=yx\vee (z\wedge x)=(yx\vee z)\wedge x=((yx\vee z):x)x$, since $x$ is weak meet principal. Since $x$ is cancellative, we infer that $y\vee(z:x)=((yx\vee z):x)$.
\end{proof}

\section{Radical factorization in $C$-lattices}

The purpose of this section is to give a sufficient condition in Theorem~\ref{Thm 3.1} for a zero-dimensional element of a $C$-lattice to factor into a product of $\ell$-radical elements. An application of this to commutative rings is given in Theorem~\ref{Thm 7.8}. In the next section, we use Theorem~\ref{Thm 3.1} to find necessary and sufficient conditions for a lattice domain to have the property that every element is a product of $\ell$-radical elements.

\begin{theorem}\label{Thm 3.1} Let $\mathcal{L}$ be a $C$-lattice, and let $x \ne 1$ be a zero-dimensional element of $\mathcal{L}$. If each maximal element above $x$ is also above a zero-dimensional $\ell$-radical element that is compact and weak meet principal, then $x=y_1\cdots y_k$ for some $\ell$-radical elements $y_1\leq\cdots\leq y_k$.
\end{theorem}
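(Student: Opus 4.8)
The plan is to work locally at the finitely many maximal elements above $x$ and then patch the local factorizations together using the meet-decomposition $x=\bigwedge_{m}x_m$ from Lemma~\ref{Lem 2.7}(5). Since $x$ is zero-dimensional and compact (it is $\ell$-principal? no—but since $x\neq 1$ is zero-dimensional in a $C$-lattice, the set of maximal elements above it is finite, as each such $m$ is minimal above $x$ and one argues via compactness that there can be only finitely many; I would first record this). Write $\mathrm{Max}(\mathcal{L})\cap\{m\mid x\leq m\}=\{m_1,\ldots,m_r\}$. By hypothesis, for each $i$ there is a zero-dimensional $\ell$-radical element $e_i\leq m_i$ that is compact and weak meet principal.

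Next I would analyze the local lattice $\mathcal{L}_{m_i}$. By Lemma~\ref{Lem 2.8}, $\sqrt{(e_i)_{m_i}}=m_i$, and since $e_i$ is $\ell$-radical one checks (using Lemma~\ref{Lem 2.7}(9), $\sqrt{(e_i)_{m_i}}=(\sqrt{e_i})_{m_i}=(e_i)_{m_i}$) that $(e_i)_{m_i}=m_i$ in $\mathcal{L}_{m_i}$; in particular $m_i$ pulled back down is handled by a weak meet principal element. Because $e_i$ is weak meet principal and $x\leq m_i=\sqrt{e_i}$ locally, there is $n_i$ with $x_{m_i}\geq e_i^{n_i}$ at the level of $\mathcal{L}_{m_i}$ (as $x\geq$ some power of $e_i$ after localizing, since every element above $x$ that is also below a power... more carefully: $e_i$ zero-dimensional with $\sqrt{e_i}=m_i$ forces, locally, $m_i^{N}\leq (e_i)_{m_i}$, hence $x_{m_i}$ is squeezed between powers of $m_i$). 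The weak meet principal, cancellative-like behavior of $e_i$ localized then lets me write $x_{m_i}=e_i^{k_i}$ (as elements of $\mathcal{L}_{m_i}$) for a suitable exponent $k_i$: this is the step where I expect to actually use that $e_i$ is weak meet principal and compact, to descend from $m_i^{a}\leq x_{m_i}\leq m_i^{b}$ to an exact power.

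Having $x_{m_i}=e_i^{k_i}$ locally, set $k=\max_i k_i$ and define, for $j=1,\ldots,k$,
\[
y_j=\bigwedge_{i\,:\,k_i\geq j}(e_i\vee x)\ \wedge\ \bigwedge_{i\,:\,k_i< j}1,
\]
i.e.\ essentially $y_j=\bigwedge_{i:\,k_i\geq j}\bigl(e_i\vee(\text{something forcing it to be }1\text{ away from }m_i)\bigr)$; more cleanly, I would take $y_j$ to be the element whose localization at $m_i$ is $e_i$ if $k_i\geq j$ and $1$ otherwise—such an element exists and is unique by Lemma~\ref{Lem 2.7}(5), and it is $\ell$-radical because being $\ell$-radical is a local property ($\sqrt{y_j}_{m}=\sqrt{(y_j)_m}$, which is $m_i$ or $1$, matching $(y_j)_{m}$). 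By construction $y_1\leq y_2\leq\cdots\leq y_k$ since $(y_j)_{m_i}=e_i\leq 1$ and the set $\{i:k_i\geq j\}$ shrinks with $j$, so more of the local components become $1$. Finally, $(y_1\cdots y_k)_{m_i}=\prod_{j\leq k_i}e_i=e_i^{k_i}=x_{m_i}$ for each maximal $m_i$ above $x$, and for any other maximal $m$, $x_m=1=(y_1\cdots y_k)_m$; hence by Lemma~\ref{Lem 2.7}(5) (and (2) for the product localization) we get $x=y_1\cdots y_k$.

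The main obstacle is the local step $x_{m_i}=e_i^{k_i}$: one has to leverage weak meet principality of the compact element $e_i$—via Lemma~\ref{Lem 2.10}, in a modular local lattice it would even be $\ell$-invertible—to show that an element sandwiched between consecutive powers of $m_i=\sqrt{e_i}$ is forced to equal an exact power of $e_i$. If modularity is not available in $\mathcal{L}_{m_i}$, I would instead argue directly: weak meet principality gives $x_{m_i}\wedge e_i^{j}=(x_{m_i}:e_i^{j})\,e_i^{j}$, and feeding in $e_i^{n}\leq x_{m_i}$ produces a chain that terminates, pinning down the exponent.
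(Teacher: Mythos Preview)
Your argument has a genuine gap at the very first step: the claim that a zero-dimensional element $x\neq 1$ in a $C$-lattice has only finitely many maximal elements above it is false. Zero-dimensionality only says that every $\ell$-prime above $x$ is maximal; it says nothing about how many there are, and $x$ is not assumed compact. For a concrete family of counterexamples, take the lattice of ideals of a one-dimensional SP-domain that is not semilocal (these exist; see the references around Corollary~\ref{Cor 7.7}). Any nonzero principal ideal is zero-dimensional and satisfies the hypothesis of the theorem (by Theorem~\ref{Thm 4.6} every maximal ideal contains a compact, weak meet principal, zero-dimensional radical ideal), yet it lies below infinitely many maximal ideals. Once finiteness fails, your construction collapses: the ``maximum'' $k=\max_i k_i$ need not exist without an a priori bound on the local exponents, and Lemma~\ref{Lem 2.7}(5) only gives \emph{uniqueness}, not existence, of an element with prescribed localizations, while Lemma~\ref{Lem 2.7}(3) only guarantees that localization commutes with finite meets.

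The paper avoids this entirely by working globally rather than patching local data. It first proves $x=\sqrt{x}\,(x:\sqrt{x})$ by checking the identity at every maximal $m$ (using that $\sqrt{x}_m=m=(e)_m$ for the hypothesized element $e\leq m$, so $\sqrt{x}_m$ inherits weak meet principality); this yields $y_1=\sqrt{x}$ as the first factor without ever enumerating the maximals. Iterating gives $x=y_1\cdots y_t\,x_{t+1}$ with $y_1\leq\cdots\leq y_{t+1}$. Termination is then forced by a separate compactness argument: since $1$ is compact, one extracts finitely many of the hypothesized elements $z_1,\ldots,z_s$ with $z_1\cdots z_s\leq\sqrt{x}$, hence $(z_1\cdots z_s)^n\leq x$ for some $n$, which bounds the local exponents uniformly and shows the iteration stops after at most $sn$ steps. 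If you want to salvage your approach, this last bound is exactly the missing ingredient that would let you define $k$ and the $y_j$ as honest (possibly infinite) meets of maximals---but verifying that those meets localize correctly and that the resulting product equals $x$ still requires essentially the same local computation the paper carries out in its Claim~4.
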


\begin{proof} We prove the theorem by establishing a series of claims.

\medskip

{\noindent}{\textsc{Claim 1}}: $(x:\sqrt{x})_m=(x_m:\sqrt{x}_m)$ for all $m\in\mbox{\rm Max}(\mathcal{L})$.

\medskip

Let $m\in\mbox{\rm Max}(\mathcal{L})$. If $x\not\leq m$, then $\sqrt{x}\not\leq m$, and since $x\leq (x:\sqrt{x})$ we have that $(x:\sqrt{x})\not\leq m$. Therefore, by Lemma~\ref{Lem 2.7}(1), $$(x:\sqrt{x})_m=1=(x_m:\sqrt{x}_m).$$

Now suppose $x\leq m$. It is clear that $(x:\sqrt{x})_m\leq (x_m:\sqrt{x}_m)$. By assumption there is a zero-dimensional $\ell$-radical element $y\in\mathcal{L}$ such that $y\leq m$ and $y$ is weak meet principal and compact. Observe that $y_m=m=\sqrt{x}_m$ by Lemma~\ref{Lem 2.8}. Next we show that $(x\vee y)_n\geq\sqrt{x}_n$ for all $n\in\mbox{\rm Max}(\mathcal{L})$. Let $n\in\mbox{\rm Max}(\mathcal{L})$. If $x\leq n$, then $(x\vee y)_n\geq y_n\geq n=n_n\geq\sqrt{x}_n$. If $x\not\leq n$, then $(x\vee y)_n=1\geq\sqrt{x}_n$.

We infer that $x\vee y\geq\sqrt{x}$, and hence $(x:y)=(x:(x\vee y))\leq (x:\sqrt{x})$. This implies that $(x_m:\sqrt{x}_m)=(x_m:y_m)=(x:y)_m\leq (x:\sqrt{x})_m$.

\medskip

{\noindent}{\textsc{Claim 2}}: $x=\sqrt{x}x_1$ for some $x_1\in\mathcal{L}$.

\medskip

Using Lemma~\ref{Lem 2.7}(5), we may verify the equality locally by showing that if $m\in\mbox{\rm Max}(\mathcal{L})$, $$x_m=\left(\sqrt{x}(x:\sqrt{x})\right)_m.$$ Let $m\in\mbox{\rm Max}(\mathcal{L})$. Consider first the case that $x\leq m$. By assumption, there exists an $\ell$-radical zero-dimensional element $y\in\mathcal{L}$ such that $y\leq m$ and $y$ is compact and weak meet principal. Since $y$ is a weak meet principal element of $\mathcal{L}$, we have that $y_m$ is a weak meet principal element of $\mathcal{L}_m$. Moreover, it follows by Lemma~\ref{Lem 2.8} that $\sqrt{x}_m=m=y_m$, and thus $\sqrt{x}_m$ is a weak meet principal element of $\mathcal{L}_m$. Therefore, $x_m=(\sqrt{x}_m(x_m:\sqrt{x}_m))_m$, and hence $x_m=(\sqrt{x}(x:\sqrt{x}))_m$ by Claim 1.

On the other hand, if $m\in\mbox{\rm Max}(\mathcal{L})$ is not above $x$, then Claim 1 and the assumption that $x\not\leq m$ imply $$x_m= 1=\left(\sqrt{x}_m(x_m:\sqrt{x}_m)\right)_m=\left(\sqrt{x}_m(x:\sqrt{x})_m\right)_m=\left(\sqrt{x}(x:\sqrt{x})\right)_m.
$$ Thus $x_m=(\sqrt{x}(x:\sqrt{x}))_m$. Since we have shown this equality holds for all $m\in\mbox{\rm Max}(\mathcal{L})$, we conclude by Lemma~\ref{Lem 2.7}(5) that $x=\sqrt{x}(x:\sqrt{x})$.

\medskip

{\noindent}{\textsc{Claim 3}}: There exist a positive integer $t$ and zero-dimensional $\ell$-radical elements $z_1,\ldots,z_t$ such that $z_1\cdots z_t\leq x$.

\medskip

Let $\{m_\alpha\}$ denote the collection of maximal elements above $x$. By assumption, for each $\alpha$ there is a zero-dimensional $\ell$-radical element $y_\alpha\in\mathcal{L}$ such that $y_\alpha\leq m_\alpha$ and $y_\alpha$ is compact and weak meet principal. Consider the element $a=\bigvee_\alpha (\sqrt{x}:y_\alpha).$ We show $a=1$. For each $\alpha$, since $y_\alpha$ is compact we have by Lemma~\ref{Lem 2.7}(6) that $$a_{m_\alpha}\geq (\sqrt{x}:y_\alpha)_{m_\alpha}=(\sqrt{x}_{m_\alpha}:(y_\alpha)_{m_\alpha})=(m_\alpha:m_\alpha)=1.$$
Therefore, $a_{m_\alpha}=1$, which forces $a\not\leq m_\alpha$. Since $x\leq a$, the maximal elements above $a$ are among the $m_\alpha$. Therefore, $a=\bigwedge_\alpha a_{m_\alpha}=1$.

By assumption, the top element $1$ of $\mathcal{L}$ is compact, so there are $s\in\mathbb{N}$ and $\alpha_1,\ldots,\alpha_s$ such that $1=(\sqrt{x}:y_{\alpha_1})\vee\cdots\vee (\sqrt{x}:y_{\alpha_s})$. For each $1\leq i\leq s$, let $z_i=y_{\alpha_i}$. Observe that $1=(\sqrt{x}:z_1)\vee\cdots\vee (\sqrt{x}:z_s)\leq (\sqrt{x}:z_1\cdots z_s)$, and thus $1=(\sqrt{x}:z_1\cdots z_s)$. Therefore, $z_1\cdots z_s\leq\sqrt{x}$. Hence the fact that $z_1\cdots z_s$ is compact and below $\sqrt{x}$ implies there is $n\in\mathbb{N}$ such that $(z_1\cdots z_s)^n\leq x$.

\medskip

{\noindent}{\textsc{Claim 4}}: There exist $\ell$-radical elements $y_1,\ldots,y_k$ of $\mathcal{L}$ such that $y_1\leq\cdots\leq y_k$ and $x=y_1\cdots y_k$.

\medskip

By Claim 2, $x=\sqrt{x}x_1$ for some $x_1\in\mathcal{L}$. If $x_1\ne 1$, then since $x_1$ is zero-dimensional, we may apply Claim 2 to obtain $x=\sqrt{x}\sqrt{x_1}x_2$ for some $x_2\in\mathcal{L}$. Continuing in this fashion, we obtain that either $x=y_1\cdots y_k$ for some $\ell$-radical elements $y_1\leq y_2\leq\cdots\leq y_k$, in which case the proof is complete, or there are $\ell$-radical elements $y_1\leq y_2\leq\cdots $ such that for each $i\in\mathbb{N}$, there is $x_{i+1}\ne 1$ with $y_{i+1}=\sqrt{x_{i+1}}$ and $x=y_1y_2\cdots y_ix_{i+1}$.

Suppose the latter case holds. By Claim 3 there are zero-dimensional $\ell$-radical elements $z_1,\ldots,z_t\in\mathcal{L}$ such that $z_1\cdots z_t\leq x$. We claim that $x=y_1y_2\cdots y_{t+1}$.

We verify the equality $x=y_1 y_2\cdots y_{t+1}$ locally. Let $m\in\mbox{\rm Max}(\mathcal{L})$, and suppose first that $x_{t+1}\leq m$. Using the fact that $$y_1\leq\cdots\leq y_t\leq y_{t+1}=\sqrt{x_{t+1}}\leq m,$$ we have that $$z_1\cdots z_t\leq x=y_1\ldots y_tx_{t+1}\leq m^{t+1}.$$ Since $(z_i)_m\geq m$ for each $i$, localizing at $m$ yields $$m^t\leq\left((z_1)_m\cdots (z_t)_m\right)_m=(z_1\cdots z_t)_m\leq x_m\leq m^{t+1}.$$ Therefore,
$x_m=m^t=m^{t+1}$ for all $m\in\mbox{\rm Max}(\mathcal{L})$ with $x_{t+1}\leq m$. Since $y_{t+1}=\sqrt{x_{t+1}}\leq m$, we have that $(y_1\cdots y_ty_{t+1})_m=m^{t+1}=x_m.$ On the other hand, if $x_{t+1}\not\leq m$, then again since $y_{t+1}=\sqrt{x_{t+1}}$ we have that
\begin{eqnarray*} x_m\:=\: (y_1\cdots y_tx_{t+1})_m &=&\left((y_1)_m\cdots (y_t)_m(x_{t+1})_m\right)_m\\
&=&\left((y_1)_m\cdots (y_t)_m(y_{t+1})_m\right)_m\\
&=& (y_1\cdots y_{t+1})_m.
\end{eqnarray*}
Therefore, $x=y_1\cdots y_{t+1}$ since this equality holds locally.
\end{proof}

\begin{corollary}\label{Cor 3.2} Let $\mathcal{L}$ be $C$-lattice, and let $y$ be a compact zero-dimensional $\ell$-radical element in $\mathcal{L}$ that is weak meet principal. Then for each $x\in\mathcal{L}$ with $y\leq\sqrt{x}$, there are $\ell$-radical elements $y_1\leq\cdots\leq y_k$ such that $x=y_1\cdots y_k$.
\end{corollary}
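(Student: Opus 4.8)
The plan is to derive Corollary~\ref{Cor 3.2} as an essentially immediate consequence of Theorem~\ref{Thm 3.1}. The key observation is that the hypotheses of the corollary are tailored so that the single element $y$ serves as the required ``witness'' below every maximal element above $x$. So first I would fix $x\in\mathcal{L}$ with $y\leq\sqrt{x}$. If $x=1$, then $\sqrt{x}=1$ is $\ell$-radical and the empty (or trivial) product handles the claim, so I would assume $x\neq 1$.

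Next I would check that $x$ is zero-dimensional. Since $y$ is zero-dimensional, every $\ell$-prime element above $y$ is maximal. By Lemma~\ref{Lem 2.7}(8), $\sqrt{x}=\bigwedge\{q\mid q\text{ is }\ell\text{-prime},\ x\leq q\}$, so any $\ell$-prime $q\geq x$ satisfies $q\geq\sqrt{x}\geq y$, hence $q$ is maximal because $y$ is zero-dimensional. Thus every $\ell$-prime above $x$ is maximal, i.e., $x$ is zero-dimensional.

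Now let $m\in\mathrm{Max}(\mathcal{L})$ with $x\leq m$. Then $m\geq\sqrt{x}\geq y$, so $m$ lies above the compact, zero-dimensional, weak meet principal $\ell$-radical element $y$. Hence the hypothesis of Theorem~\ref{Thm 3.1} is satisfied — using the same element $y$ for every such $m$ — and the theorem yields $\ell$-radical elements $y_1\leq\cdots\leq y_k$ with $x=y_1\cdots y_k$, as desired.

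There is no real obstacle here; the only point requiring a moment's care is the zero-dimensionality of $x$, which follows cleanly from Lemma~\ref{Lem 2.7}(8) together with the hypothesis $y\leq\sqrt{x}$ and the zero-dimensionality of $y$. Everything else is a direct appeal to Theorem~\ref{Thm 3.1} with the common witness $y$.
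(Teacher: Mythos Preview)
Your proof is correct and follows exactly the paper's approach: the paper's proof is simply ``Apply Theorem~\ref{Thm 3.1},'' and you have merely made explicit the verification that $x$ is zero-dimensional and that the single element $y$ serves as the required witness below every maximal element above $x$.
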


\begin{proof} Apply Theorem~\ref{Thm 3.1}.
\end{proof}

\section{Radical factorization in lattice domains}

In this section we characterize the radical factorization property in $C$-lattices for which every element is a join of $\ell$-invertible elements. The characterization in Theorem~\ref{Thm 4.6}, the main result of this section, will serve as a basis for most of the applications given in later sections.

\begin{definition}\label{Def 4.1} A {\it radical factorial lattice} $\mathcal{L}$ is a multiplicative lattice $\mathcal{L}$ such that every element in $\mathcal{L}$ is a product of $\ell$-radical elements.
\end{definition}

The proof of Theorem~\ref{Thm 4.6} relies on three technical lemmas, all of which are motivated by arguments from \cite{AD}, although our proofs are more complicated due to the generality of our setting. We show in Section 7 how to derive some of the results from \cite{AD} in our context.

\begin{lemma}\label{Lem 4.2} Let $\mathcal{L}$ be a $C$-lattice, and let $p$ be an $\ell$-prime element of $\mathcal{L}$. If $x$ is a compact join principal element such that $x\not\leq p$ and $x^2\vee p$ is a product of $\ell$-radical elements, then for each $\ell$-prime element $q$ minimal over $x\vee p$, we have that $p\leq (q^2)_q$ and $q\ne (q^2)_q$.
\end{lemma}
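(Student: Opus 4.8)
The plan is to work locally at the $\ell$-prime element $q$ and exploit the hypothesis that $x^2 \vee p$ is a product of $\ell$-radical elements. First I would pass to the localization $\mathcal{L}_q$. Since $q$ is minimal over $x \vee p$, Lemma~\ref{Lem 2.8} gives $\sqrt{(x \vee p)_q} = q_q$; moreover since $x \not\leq p$ we should have $p \leq q$ (as $q$ lies above $x \vee p$, hence above $p$). The key point will be to understand the $\ell$-radical factorization of $x^2 \vee p$ after localizing at $q$: if $x^2 \vee p = r_1 \cdots r_n$ with each $r_i$ $\ell$-radical, then $(x^2 \vee p)_q = (r_1)_q \cdots (r_n)_q$ up to the localization closure, and by Lemma~\ref{Lem 2.7}(9) each $(r_i)_q$ is again $\ell$-radical in $\mathcal{L}_q$. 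Because $q$ is minimal over $x \vee p$ and $x^2 \vee p \leq x \vee p \leq q$, the radical of each factor that is $\leq q$ must be $q_q$ itself, so in $\mathcal{L}_q$ the element $(x^2 \vee p)_q$ is a product of copies of $q_q$ together possibly with the top element.

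Next I would pin down which power of $q_q$ occurs. Since $x \not\leq p$ but $x \leq q$, and $x$ is compact and join principal (so $x_q$ is join principal in $\mathcal{L}_q$, and by Lemma~\ref{Lem 2.7}(7)-type reasoning well-behaved), the element $x_q$ is a proper join principal element of the local lattice $\mathcal{L}_q$ with $\sqrt{x_q} \leq q_q$. The containment $x^2 \vee p \leq q$ forces $(x^2 \vee p)_q \leq q_q$, so $(x^2 \vee p)_q$ equals $(q_q)^k$ for some $k \geq 1$ after applying the localization operation; and since $x_q^2 \leq (x^2 \vee p)_q$ while $x_q \not\leq q_q^2$ is impossible to rule out directly, I expect one shows $k = 1$ by a compactness/join-principality squeeze: $q_q = \sqrt{(x^2\vee p)_q} \geq (x^2 \vee p)_q \geq x_q^2$, and if $(x^2 \vee p)_q = (q_q^k)_q$ with $k \geq 2$ then $x_q^2 \leq (q_q^2)_q$, forcing $x_q \leq (q_q^2)_q$ by join principality applied to the factorization, which would contradict minimality of $q$ over $x \vee p$ since then $x \vee p$ would be "too deep." So $(x^2 \vee p)_q = (q_q)_q = q_q$ (using Lemma~\ref{Lem 2.7}(4) if $q \in \mathrm{Max}(\mathcal{L})$, or the appropriate local version otherwise).

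From $(x^2 \vee p)_q = q_q$ I would then extract the two conclusions. We have $p_q \leq (x^2 \vee p)_q = q_q$, but more usefully $x_q^2 \vee p_q \leq q_q$ combined with $x_q \not\leq q_q$... actually the cleaner route: since $x^2 \vee p \leq q$ gives $p \leq q$, localize to get $p_q \leq q_q$; and since $x_q$ is join principal with $x_q \leq q_q$, we get $x_q^2 \leq (q_q^2)_q$, whence $(x^2 \vee p)_q = q_q$ yields $x_q^2 \vee p_q = q_q \pmod{\text{loc}}$, and reducing modulo $(q_q^2)_q$ shows $p_q \vee (q_q^2)_q \geq q_q$, i.e. $q_q \leq p_q \vee (q_q^2)_q$. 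Translating back via $(\ )_q$ being a closure and the identities $(q^2)_q$, $p_q$, this gives $p \leq (q^2)_q$ after checking $p = p_q$ is not needed—only $p \leq (q^2)_q$—and the inequality $q \neq (q^2)_q$ follows because otherwise $q_q = q_q^2$ would make $q_q$ idempotent, contradicting that $x_q$ is a proper (noncancellative-free) join principal element below $q_q$ with $\sqrt{x_q} = q_q$, i.e. $q_q$ would have to be below $x_q$, contradicting $x \not\leq p$ hence $x \not\leq q$ is false... I need to be careful here.

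The main obstacle I anticipate is precisely this last bookkeeping: translating the local statement "$(x^2 \vee p)_q = q_q$ and $q_q \neq (q_q^2)_q$" back into the global inequalities $p \leq (q^2)_q$ and $q \neq (q^2)_q$, since the localization operation $x \mapsto x_q$ is only a closure operator and does not commute naively with all the operations. The resolution will be to use Lemma~\ref{Lem 2.7} parts (2), (4), (6), (9) systematically and the fact that $x^2 \vee p$ being a product of $\ell$-radicals is a global hypothesis that survives localization, together with the minimality of $q$ over $x \vee p$ which prevents the "depth" of the factorization at $q$ from exceeding one. Establishing $q \neq (q^2)_q$ cleanly is where I expect to spend the most care: it should follow because $q = (q^2)_q$ together with $p \leq (q^2)_q = q$ and the factorization would force $x \vee p$ to have radical strictly below $q$ or make $x$ itself satisfy $x \leq q = (q^2)_q$ in a way incompatible with $x$ being join principal and $x^2 \vee p$ radical-factorable but not equal to its own radical near $q$.
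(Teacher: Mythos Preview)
Your proposal has a genuine directional error at its core. After correctly observing that $(x^2 \vee p)_q = (q^n)_q$ for some $n \geq 1$ (each localized $\ell$-radical factor being either $1$ or $q$, since $q$ is minimal over $x^2 \vee p$), you set out to prove $n = 1$. This is exactly backwards: the argument must show $n \geq 2$, and it is precisely $n \geq 2$ that yields $p \leq (x^2 \vee p)_q = (q^n)_q \leq (q^2)_q$ immediately, with no further translation needed. Your subsequent attempt to extract $p \leq (q^2)_q$ from the (false) equality $(x^2 \vee p)_q = q$ via an inclusion of the form $q \leq p_q \vee (q^2)_q$ does not give the desired conclusion and cannot be repaired along those lines.

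The join principality of $x$ is used in the opposite direction from what you sketch: it rules out $n = 1$. Suppose $(x^2 \vee p)_q = q$; then also $(x \vee p)_q = q$, so $x_q \leq (x^2 \vee p)_q$ and hence $((x^2 \vee p)_q : x_q) = 1$. On the other hand, join principality gives $((p \vee x^2):x) = x \vee (p:x) = x \vee p$, using that $p$ is $\ell$-prime with $x \not\leq p$, so $(p:x) = p$. Since $x$ is compact, Lemma~\ref{Lem 2.7}(6) yields $(x \vee p)_q = ((p \vee x^2):x)_q = ((p \vee x^2)_q : x_q) = 1$, contradicting $x \vee p \leq q$. Hence $n \geq 2$. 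The inequality $q \neq (q^2)_q$ then falls out at once: if $q = (q^2)_q$, then $(q^k)_q = q$ for every $k \geq 1$, forcing $(x^2 \vee p)_q = (q^n)_q = q$, which was just excluded.
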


\begin{proof} Let $q$ be an $\ell$-prime element minimal over $x\vee p$. By assumption, $x^2\vee p=x_1\cdots x_k$ for some $\ell$-radical elements $x_i$. Therefore, $(x^2\vee p)_q=((x_1)_q\cdots (x_k)_q)_q$. Since $q$ is minimal over $x\vee p$, and hence minimal over $x^2\vee p$, it follows that for each $i$, $(x_i)_q$ is either $1$ or $q$ is minimal over $(x_i)_q$. In the latter case, since $x_i$ is $\ell$-radical, $(x_i)_q=q$. Consequently, from the fact that $(x^2\vee p)_q=((x_1)_q\cdots (x_k)_q)_q$, we conclude that $(x^2\vee p)_q=(q^n)_q$ for some $n\in\mathbb{N}$.

Assume that $(x^2\vee p)_q=q$. Then $q=(x^2\vee p)_q\leq (x\vee p)_q\leq q$, and thus $x_q\leq (x\vee p)_q=(x^2\vee p)_q$. The fact that $x$ is join principal, $p$ is $\ell$-prime and $x\not\leq p$ implies that $((p\vee x^2):x)=x\vee (p:x)=x\vee p$. Since $x$ is compact, we have that $1=((p\vee x^2)_q: x_q)=((p\vee x^2): x)_q=(x\vee p)_q$ by Lemma~\ref{Lem 2.7}(6). We infer by Lemma~\ref{Lem 2.7}(1) that $x\vee p\not\leq q$, a contradiction. Therefore, $(x^2\vee p)_q\not=q$, and thus $n\geq 2$. Consequently, $p\leq (x^2\vee p)_q=(q^n)_q\leq (q^2)_q$. If $q=(q^2)_q$, then $q=(q^k)_q$ for each $k\in\mathbb{N}$, and hence $(x^2\vee p)_q=(q^n)_q=q$, a contradiction.
\end{proof}

\begin{lemma}\label{Lem 4.3} Let $\mathcal{L}$ be a $C$-lattice, and let $p<q$ be $\ell$-prime elements of $\mathcal{L}$. Suppose there is a compact weak meet principal element $x\leq q$ with $x_q=q$. If $p$ is a join of compact weak join principal elements, each of which is a product of $\ell$-radical elements, then $p=0_q$.
\end{lemma}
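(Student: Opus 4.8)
\emph{The reduction.} Write $c=c_1\cdots c_n$ for a typical compact, weak join principal element with $c\le p$ that is a product of $\ell$-radical elements. A compact element $z$ satisfies $z\le 0_q$ exactly when $(0:z)\not\le q$: if $(0:z)\not\le q$, pick a compact $b\le(0:z)$ with $b\not\le q$, so $zb=0$; conversely, if $z\le 0_q$, then $z$ lies below a finite join of compacts $a_i$ with $a_ib_i=0$, $b_i\not\le q$, and $b:=b_1\cdots b_k\not\le q$ (as $q$ is $\ell$-prime) satisfies $zb\le\bigvee_i a_ib=0$. Since $p$ is the join of elements $c$ as above and $0_q$ is a join of compact elements, it suffices to prove $(0:c)\not\le q$ for each such $c$: this gives $p\le 0_q$, while $0_q\le p$ is automatic because $0\le p$ forces $0_q\le p_q$ and $p_q=p$ (if $ab\le p$ with $b\not\le q$, then $a\le p$, using that $p$ is $\ell$-prime and $p\le q$). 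The same observation shows $x\not\le p$, since $x\le p$ would give $x_q\le p<q$.

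\emph{Passing to a local lattice.} Fix such a $c$ and suppose, for a contradiction, that $(0:c)\le q$. Localizing at $q$ and invoking Lemma~\ref{Lem 2.7}, which makes $x_q=q$ a \emph{compact, weak meet principal} element and preserves compactness, weak join principality, $\ell$-radicality, and the relevant inequalities, we may assume that $\mathcal L$ is local with maximal element $q$, that $c\le p<q$, that $c$ is a compact weak join principal product $c_1\cdots c_n$ of $\ell$-radical elements, and that $(0:c)\le q$; the goal becomes to show $c=0$ (which contradicts $(0:c)\le q$, since in the local lattice $c=0$ means $(0:c)=1\ne q$). Discarding the $\ell$-radical factors equal to $1$ (the only element of $\mathcal L$ not below $q$), we may assume each $c_i\le q$ and $n\ge 1$; note also $\sqrt c\le\sqrt p=p$ and $(\sqrt c)^{\,n}\le c$.

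\emph{The chain of powers of $q$.} Since $q$ is weak meet principal and $c\le q$, we get $c=q\wedge c=(c:q)\,q$. Here $(c:q)\ne 1$: otherwise $c=q$, so $\sqrt c=\sqrt q=q$, contradicting $\sqrt c\le p<q$. Hence $(c:q)\le q$, and the argument iterates: if $(c:q^{k})=1$ for some $k$, then $c=(c:q^{k-1})=\cdots=q^{k}$, so $\sqrt c=q$ again, impossible; thus $(c:q^{k})\le q$ and $c=(c:q^{k})\,q^{k}\le q^{k+1}$ for every $k$. Therefore $c\le q^{k}$ for all $k\in\mathbb N$ (and, by the same computation applied to $p$, also $p\le q^{k}$ for all $k$).

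\emph{The crux and the main obstacle.} It remains to deduce $c=0$ from the information now in hand: $c$ is compact and weak join principal, $c=c_1\cdots c_n$ with every $c_i$ $\ell$-radical and $\le q$, and $c\le q^{k}$ for all $k$. The plan is to localize at an $\ell$-prime $Q$ minimal over $c$ that lies below $p$ — such a $Q$ exists by a Zorn's lemma argument applied to the $\ell$-primes between $c$ and $p$, whose infima along chains are again $\ell$-prime. By Lemma~\ref{Lem 2.8}, $\sqrt{c_Q}=Q$; and for each $i$ with $c_i\le Q$ the prime $Q$ is minimal over $c_i$ as well (every $\ell$-prime above $c_i$ is above $c\le c_i$), so Lemma~\ref{Lem 2.8}, together with the $\ell$-radicality of $(c_i)_Q$, forces $(c_i)_Q=Q$, while $(c_i)_Q=1$ for the remaining $i$; hence $c_Q=(Q^{m})_Q$ with $m=\#\{i:c_i\le Q\}\ge 1$, so $Q^{m}\le c_Q\le Q$. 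One then plays this power-of-$Q$ description of $c_Q$ off against the weak join principality of $c$ (which makes $c$ cancellative modulo its annihilator), the relations $(\sqrt c)^{\,n}\le c$, and the factorizations $c=(c:q^{k})\,q^{k}$, forcing $(0:c)=1$, i.e.\ $c=0$. I expect this last collapse to be the main obstacle: the relation $c\le\bigwedge_{k}q^{k}$ is by itself far too weak (for a valuation lattice $\bigwedge_{k}q^{k}$ can be a nonzero $\ell$-prime lying inside $p$), so the argument must genuinely use at once that $c$ is weak join principal \emph{and} that $c$ is a product of $\ell$-radical elements; in particular, the case in which $q$ is locally idempotent, so that the power chain of the previous step carries no information, has to be handled entirely through the radical factorization of $c$.
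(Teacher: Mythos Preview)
Your reduction step is fine: the inequality $0_q\le p$ via $p_q=p$, the equivalence $c\le 0_q\iff (0:c)\not\le q$, and the observation $x\not\le p$ are all correct. But the main body of the argument is genuinely incomplete, as you yourself flag. Showing $c\le q^{k}$ for every $k$ and then localizing at a minimal prime $Q$ over $c$ to obtain $c_Q=(Q^m)_Q$ does not close the gap: neither of these facts forces $c=0$ (or $(0:c)\not\le q$), and the vague ``play off'' you describe is exactly the missing idea. In particular, when $q$ is idempotent in the localization your chain $c\le q^{k}$ collapses to $c\le q$, and the $Q$-local picture $c_Q=(Q^m)_Q$ carries no global information without some additional input you never supply. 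There is also a smaller issue: your appeal to Lemma~\ref{Lem 2.7} to claim that localization at $q$ preserves weak join principality of $c$ is not something that lemma states, so even the passage to the local lattice would need more care.

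The paper's argument avoids all of this by never localizing $\mathcal L$ and never iterating division by $q$. Pick a single $\ell$-radical factor $z_1\le p$ of $z$ (your $c$). The key move is to apply weak meet principality of $x$ \emph{to $z_1$}, not to $c$: write $z_1\wedge x=ax$, so $(z_1)_q=(ax)_q$. Since $ax\le(z_1)_q\le p$ and $x\not\le p$, one gets $a\le p\le q$, whence $a^2\le aq=ax_q\le(z_1)_q$; because $(z_1)_q$ is $\ell$-radical this forces $a\le(z_1)_q$, so $(z_1)_q=(ax)_q\le(z_1x)_q$. Multiplying through by the remaining factors gives $z_q\le(zx)_q$. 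Now compactness of $z$ produces $b\not\le q$ with $zb\le zx$, and weak join principality of $z$ yields $b\le x\vee(0:z)$; since $x\le q$ this forces $(0:z)\not\le q$, which is exactly what you needed. The whole point is the single inequality $(z_1)_q\le(z_1x)_q$, extracted from the interplay between the radicality of $z_1$ and the weak meet principality of $x$; your approach never isolates an individual radical factor in this way, and that is why it stalls.
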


\begin{proof} Let $z$ be a compact weak join principal element with $z\leq p$ such that $z=z_1\cdots z_n$, where each $z_i$ is an $\ell$-radical element of $\mathcal{L}$. Since $p$ is a join of such elements, to prove that $p=0_q$ it suffices to show that $z_q\leq 0_q$. Now $z\leq p<q=x_q$, so without loss of generality, $z_1\leq p$. If $x\leq p$, then $q=x_q\leq p_q=p$, a contradiction. Consequently, $x\not\leq p$. Since $x$ is weak meet principal, there is $a\in\mathcal{L}$ such that $z_1\wedge x=ax$. Therefore, $(ax)_q=(z_1)_q\wedge x_q=(z_1)_q$.

Moreover, since $z_1\leq p$, it follows that $ax\leq (ax)_q=(z_1)_q\leq p$, and hence $a\leq p\leq q$. Therefore, we have that $a^2\leq a q=a x_q\leq (z_1)_q$. Since $(z_1)_q$ is an $\ell$-radical element of $\mathcal{L}$, this implies $a\leq (z_1)_q$. Hence $(z_1)_q=(ax)_q\leq (z_1x)_q$, and so $z_q=(z_1z_2\cdots z_n)_q\leq (z_1xz_2\cdots z_n)_q=(zx)_q$. Since $z\leq (zx)_q$ and $z$ is compact, Lemma~\ref{Lem 2.7} implies there is $b\not\leq q$ such that $zb\leq zx$. By assumption, $z$ is weak join principal, so $b\leq x\vee (0:z)$. Since $x\leq q$ and $b\not\leq q$, this implies $(0:z)\not\leq q$. Therefore, $z_q\leq 0_q$.
\end{proof}

\begin{lemma}\label{Lem 4.4} Let $\mathcal{L}$ be a principally generated radical factorial $C$-lattice.
\begin{enumerate}
\item $\dim\mathcal{L}\leq 1$.
\item If $x\in\mathcal{L}$ and $m\in\mbox{\rm Max}(\mathcal{L})$, then either $x_m=m^k$ for some $k\in\mathbb{N}_0$ or $x_m=0_m$.
\item $\mathcal{L}$ is a Pr\"ufer lattice, i.e., each compact element is $\ell$-principal.
\end{enumerate}
\end{lemma}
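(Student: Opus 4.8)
The plan is to prove the three items more or less in the order stated, using localization at maximal elements and the two preceding lemmas as the main engine. First I would set up the key reduction: by Lemma~\ref{Lem 2.7}(5), $\mathcal{L}$ is determined by its localizations $\mathcal{L}_m$ at maximal elements, and each $\mathcal{L}_m$ inherits the hypotheses (it is a $C$-lattice, it is principally generated, and it is radical factorial — the latter because localization commutes with products and sends $\ell$-radical elements to $\ell$-radical elements by Lemma~\ref{Lem 2.7}(9)). So it suffices to understand the situation when $\mathcal{L}$ is local with maximal element $m$.

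For item (1), suppose toward a contradiction that there is a chain $p < q < m$ of $\ell$-prime elements (after localizing we may take the top one to be $m$; more precisely, if $\dim\mathcal{L}\geq 2$ there is a chain $p<q<r$ of $\ell$-primes, and localizing at a maximal element above $r$ preserves this chain). Since $\mathcal{L}$ is principally generated, pick a compact $\ell$-principal element $x\leq m$ with $x\not\leq q$; after passing to the local situation we can arrange $x_m = m$, so $x$ witnesses the hypothesis of Lemma~\ref{Lem 4.3} for the pair $q < m$. Every $\ell$-principal element is in particular weak join principal and compact, so $p$ (being a join of compact $\ell$-principal elements, each of which is a product of $\ell$-radical elements since $\mathcal{L}$ is radical factorial) satisfies the hypotheses of Lemma~\ref{Lem 4.3}, giving $p = 0_m = 0$ in the local lattice. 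But then in $\mathcal{L}_q$ we would have $0 = p_q$ while $q$ is a nonzero $\ell$-prime strictly below $m$; reflecting this back and using that a similar argument applies one level up (now to the pair $p < q$, using a compact $\ell$-principal element $\leq q$ not below $p$) yields the contradiction. I expect the bookkeeping of "which maximal element to localize at" to be the fiddly part here; the clean way is: if $\dim\mathcal{L}\geq 2$, fix $\ell$-primes $p<q$ with $q$ not maximal, choose $m\in\mathrm{Max}(\mathcal{L})$ with $q<m$, apply Lemma~\ref{Lem 4.3} in $\mathcal{L}_m$ to conclude $p_m=0_m$, hence $p_q = 0_q$, contradicting that $q$ is $\ell$-prime (so $q\neq 1$ in $\mathcal{L}_q$) minimal above $p$, which by Lemma~\ref{Lem 2.8} would force $q = \sqrt{0_q}$, absurd once one checks $q \neq 0_q$.

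For item (2), localize at $m$ and let $x\in\mathcal{L}$. Write $x = x_1\cdots x_n$ with each $x_i$ $\ell$-radical. In $\mathcal{L}_m$, each $(x_i)_m$ is either $1$ (if $x_i\not\leq m$) or is an $\ell$-radical element $\leq m$; by item (1), $\dim\mathcal{L}_m\leq 1$, so the only $\ell$-primes above a proper element of $\mathcal{L}_m$ are $m$ itself and possibly the minimal $\ell$-prime $0_m$ — hence an $\ell$-radical element $\leq m$ is an intersection of $\ell$-primes by Lemma~\ref{Lem 2.7}(8), so it is $m$, or $0_m$, or $m\wedge 0_m$; in the local Prüfer setting (item (3), or directly, since $m$ is the unique maximal element and minimal $\ell$-primes are comparable issues) this collapses so that $(x_i)_m \in\{1, m, 0_m\}$. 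Taking the product over $i$ gives $x_m = m^k$ for some $k\in\mathbb{N}_0$ or $x_m = 0_m$. Here the subtle point — and I think the real obstacle in the whole lemma — is ruling out "mixed" behavior, i.e.\ ensuring an $\ell$-radical element localized at $m$ cannot be something strictly between $m$ and $0_m$ other than these; this is exactly where one needs that the minimal $\ell$-primes are controlled, which in turn uses principal generation together with item (1) and possibly item (3), so items (2) and (3) may need to be interleaved rather than proved strictly in sequence.

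For item (3), I would show each compact element is $\ell$-principal by showing it is locally $\ell$-principal (a compact element that is meet and join principal locally at every maximal element is $\ell$-principal — this is the lattice analogue of the ring fact and can be assembled from Lemma~\ref{Lem 2.7}, especially parts (2),(3),(6)). So fix $m\in\mathrm{Max}(\mathcal{L})$ and a compact $x$; we want $x_m$ $\ell$-principal in $\mathcal{L}_m$. Since $\mathcal{L}$ is principally generated and $x$ is compact, $x$ is a finite join of $\ell$-principal elements, so $x_m$ is a finite join of $\ell$-principal elements of $\mathcal{L}_m$. By item (2) applied to each of these $\ell$-principal summands, each is $m^{k_i}$ or $0_m$, and a finite join of such is again of the form $m^k$ or $0_m$ (taking $k = \min k_i$, using that $m^{k}\geq m^{k+1}$ and that $0_m\vee m^k = m^k$ when $0_m \leq m^k$, which holds since $0_m$ is below every element). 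Finally, $m^k$ is $\ell$-principal by Lemma~\ref{Lem 2.5}(1) once we know $m = m_m$ is $\ell$-principal, which holds because $m$ is a join of $\ell$-principal elements each localizing to $m^{k_i}$ or $0_m$, and at least one must localize to $m = m^1$ (else $m_m = 0_m$, impossible as $m\neq 0$); and $0_m$ is trivially $\ell$-principal. Thus every compact element is locally $\ell$-principal, hence $\ell$-principal, so $\mathcal{L}$ is a Prüfer lattice.
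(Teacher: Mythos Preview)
Your argument for item~(1) has a genuine gap at the step ``after passing to the local situation we can arrange $x_m = m$.'' You cannot arrange this: picking an $\ell$-principal $x\leq m$ with $x\not\leq q$ in no way forces $x_m=m$ (for instance $x_m$ could lie strictly between $q_m$ and $m$, or equal $m^2$). The hypothesis $x_q=q$ in Lemma~\ref{Lem 4.3} is exactly the hard input, and producing such a witness is the whole point of Lemma~\ref{Lem 4.2}, which you never invoke. In the paper's proof one first uses Lemma~\ref{Lem 4.2} (applied to some $\ell$-principal $x\leq m$, $x\not\leq p$) to find an $\ell$-prime $q\leq m$ minimal over $p\vee x$ with $q\neq (q^2)_q$; this lets one choose an $\ell$-principal $y\leq q$ with $y\not\leq (q^2)_q$, and a second, more delicate application of Lemma~\ref{Lem 4.2} shows $q$ is minimal over $y$, whence $y_q=q$. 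Only then does Lemma~\ref{Lem 4.3} apply to give $p=0_q$, and a further iteration forces $q=m$.

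Your ``clean way'' conclusion also fails: from $p_q=0_q$ and $q$ minimal over $p$ you deduce $q=\sqrt{0_q}$, but this is not absurd --- it merely says $q$ is the unique minimal $\ell$-prime of $\mathcal{L}_q$, which is perfectly consistent. The correct contradiction (as in the paper) comes from showing that \emph{every} nonmaximal $\ell$-prime $p$ satisfies $p=0_m$ for all maximal $m>p$; then two distinct nonmaximal primes below the same $m$ would both equal $0_m$. This stronger statement is also what you need for item~(2): knowing only $\dim\mathcal{L}\leq 1$ does not rule out several minimal $\ell$-primes below $m$, so an $\ell$-radical element of $\mathcal{L}_m$ need not a priori lie in $\{m,0_m\}$; you correctly flag this as subtle but do not resolve it. Once (2) is established, item~(3) goes through more simply than you wrote: (2) makes $\mathcal{L}_m$ totally ordered, so a finite join of $\ell$-principal elements is just the largest of them.
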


\begin{proof} (1) The main part of the proof of (1) consists in showing that if $p$ is a nonmaximal $\ell$-prime element, then $p=0_m$ for all $m\in\mbox{\rm Max}(\mathcal{L})$ above $p$. For suppose that this has been established and $q\leq p<m$ are $\ell$-prime elements and $m\in\mbox{\rm Max}(\mathcal{L})$. Then $p=0_m=q$ by the claim, which in turn implies that $\dim\mathcal{L}\leq 1$. Therefore, we focus in the proof on showing that if $p$ is a nonmaximal $\ell$-prime element, then $p=0_m$ for all $m\in\mbox{\rm Max}(\mathcal{L})$ above $p$.

Let $p$ be a nonmaximal $\ell$-prime element in $\mathcal{L}$, let $m\in\mbox{\rm Max}(\mathcal{L})$ with $p<m$, and let $x$ be an $\ell$-principal element in $\mathcal{L}$ such that $x\leq m$ and $x\not\leq p$. Let $q$ be an $\ell$-prime element with $q\leq m$ and $q$ minimal over $p\vee x$. By Lemma~\ref{Lem 4.2}, $q\ne (q^2)_q$. Let $y$ be an $\ell$-principal element in $\mathcal{L}$ such that $ y\leq q$ and $y\not\leq (q^2)_q$. Write $y=y_1\cdots y_k$, where the $y_i$ are $\ell$-radical elements. Then $y_q=((y_1)_q\cdots (y_k)_q )_q$. With an aim of applying Lemma~\ref{Lem 4.3}, we show that $y_q=q$.

In light of Lemma~\ref{Lem 2.8}, the preceding decomposition of $y$ and the assumption that $y\not\leq (q^2)_q$, to prove that $y_q=q$ it is enough to show that $q$ is a minimal $\ell$-prime element above $y$. Assume that there is some $\ell$-prime element $n$ with $y\leq n<q$. Then there exists an $\ell$-principal element $z\leq q$ and $z\not\leq n$.
Since $z\vee n\leq q$, there is an $\ell$-prime element $n'\leq q$ minimal over $z\vee n$. By assumption, $z^2\vee n$ is a product of $\ell$-radical elements. From Lemma~\ref{Lem 4.2} we have that $y\leq n\leq ((n')^2)_{n'}\not=n'$. Note that $((n')^2)_{n'}=v_1\cdots v_s$, where the $v_i$ are $\ell$-radical elements of $\mathcal{L}$. There is some $1\leq j\leq s$ such that $v_j\leq n'$. Since $(n')^2\leq v_j\leq n'$, we have that $v_j=n'$. If $v_i\not\leq n'$ for all $1\leq i\leq s$ such that $i\not=j$, then $((n')^2)_{n'}=((v_1)_{n'}\cdots (v_s)_{n'})_{n'}=n'$. We infer that $((n')^2)_{n'}=(n')^2$, and thus $y\leq (n')^2\leq (q^2)_q$, a contradiction.

Now, applying Lemma~\ref{Lem 4.3}, we have that $p=0_q$. If $q<m$, then we may repeat the preceding argument to show that there is an $\ell$-prime element $n$ with $q<n\leq m$ such that $q=0_n$. Then $p=0_q=(0_n)_q=q_q=q$, contrary to the choice of $q$. Therefore, $q=m$. We conclude that if $m\in\mbox{\rm Max}(\mathcal{L})$ with $p<m$, we have that $p=0_m$. This proves the claim.

(2) Let $m\in\mbox{\rm Max}(\mathcal{L})$, and let $x\in\mathcal{L}$. Without restriction let $x\leq m$. Let $p\in\mathcal{L}$ be $\ell$-prime with $p\leq m$ and $p$ minimal over $x_m$. Suppose first that $p<m$. Then as observed in the proof of (1), $p=0_m$, in which case, $x_m=0_m$. Otherwise, if $p=m$, then $\sqrt{x_m}=m$, and since $x_m$ is a product of $\ell$-radical elements, we have that $x_m$ is a power of $m$.

(3) Let $x\in\mathcal{L}$ be compact. It follows from Lemma~\ref{Lem 2.7}(6) that $x$ is $\ell$-principal if and only if $x_m$ is $\ell$-principal in $\mathcal{L}_m$ for each $m\in\mbox{\rm Max}(\mathcal{L})$ with $x\leq m$. Let $m\in\mbox{\rm Max}(\mathcal{L})$ with $x\leq m$. We show that $x_m$ is $\ell$-principal in $\mathcal{L}_m$. Indeed, by (2) the elements in $\mathcal{L}_m$ are totally ordered with respect to $\leq$. Since $x$ is compact in $\mathcal{L}$, $x_m$ is compact in $\mathcal{L}_m$. Since every element of $\mathcal{L}$, hence of $\mathcal{L}_m$ is a join of $\ell$-principal elements, it follows that $x_m$ is a join of finitely many $\ell$-principal elements in $\mathcal{L}_m$. Since the elements in $\mathcal{L}_m$ are totally ordered, it follows that $x_m$ is $\ell$-principal in $\mathcal{L}_m$.
\end{proof}

We want to thank T. Dumitrescu for pointing out (by personal communication) that Lemma~\ref{Lem 4.4}(1) can be proved directly (i.e., not relying on Lemmas~\ref{Lem 4.2} and~\ref{Lem 4.3} of this paper) by modifying the proof of \cite[Theorem 3.3]{AD}.

\begin{remark}\label{Rem 4.5} The proof of (3) shows that for each $m\in\mbox{\rm Max}(\mathcal{L})$, every element of $\mathcal{L}_m$ is an $\ell$-principal element. For more on $C$-lattices with this property (which are called almost principal element lattices), see \cite{JJA}.
\end{remark}

\begin{theorem}\label{Thm 4.6} The following are equivalent for a principally generated $C$-lattice domain $\mathcal{L}$.
\begin{enumerate}
\item $\mathcal{L}$ is a radical factorial lattice.
\item $\dim\mathcal{L}\leq 1$ and each $\ell$-invertible element is a product of $\ell$-radical elements.
\item Each nonzero $\ell$-prime element is maximal and above an $\ell$-invertible $\ell$-radical element.
\item Each element is a product of $\ell$-radical elements $x_1\leq\cdots\leq x_n$.
\item The $\ell$-radical of each nonzero compact element is $\ell$-invertible.
\item Every nonzero compact element is $\ell$-invertible and the $\ell$-radical of every compact element is compact.
\end{enumerate}
\end{theorem}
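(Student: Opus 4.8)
The plan is to prove the equivalence of (1)--(6) by establishing a cycle of implications, using the three technical lemmas and Lemma~\ref{Lem 4.4} as the main engines. A natural route is $(1)\Rightarrow(2)\Rightarrow(3)\Rightarrow(5)\Rightarrow(6)\Rightarrow(4)\Rightarrow(1)$, with $(4)\Rightarrow(1)$ being trivial. For $(1)\Rightarrow(2)$: the dimension bound is exactly Lemma~\ref{Lem 4.4}(1), and the statement about $\ell$-invertible elements is immediate since in a radical factorial lattice \emph{every} element is a product of $\ell$-radical elements. The implication $(2)\Rightarrow(3)$ requires work: given a nonzero $\ell$-prime $p$, the dimension hypothesis forces $p$ to be maximal (any $\ell$-prime properly below it together with $0$ would give a chain of length $3$, using that $\mathcal{L}$ is a domain so $0$ is $\ell$-prime). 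To produce an $\ell$-invertible $\ell$-radical element below $p$, I would pick a nonzero $\ell$-principal (hence, in a domain, $\ell$-invertible by the remark after Definition~\ref{Def 2.4}) element $x\leq p$ with $x\neq 0$; then $x$ is $\ell$-invertible, so by (2) it factors as $x=x_1\cdots x_k$ with each $x_i$ $\ell$-radical, and by Lemma~\ref{Lem 2.5}(2) each $x_i$ is $\ell$-invertible. At least one $x_i\leq p$ (since $p$ is $\ell$-prime and $x\leq p$), giving the desired $\ell$-invertible $\ell$-radical element below $p$.

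For $(3)\Rightarrow(5)$: let $x$ be a nonzero compact element; I want $\sqrt{x}$ to be $\ell$-invertible. The strategy is to check $\ell$-invertibility locally at each maximal element $m$ via Lemma~\ref{Lem 2.7}(6)(7), or more directly to exhibit $\sqrt{x}$ as compact, weak meet principal and cancellative and invoke Lemma~\ref{Lem 2.10}. Since $\mathcal{L}$ is principally generated and a domain, and since (3) tells us each maximal element above $x$ sits above an $\ell$-invertible $\ell$-radical element, Lemma~\ref{Lem 2.8} identifies $\sqrt{x}_m=m$ for those $m$; combined with the fact that every $\ell$-invertible element is compact, weak meet principal, zero-dimensional (dimension is $\leq 1$, so nonzero $\ell$-primes are maximal), one should be able to show $\sqrt{x}$ equals the product of finitely many of these $\ell$-invertible $\ell$-radical elements (a compactness argument like Claim~3 of Theorem~\ref{Thm 3.1}) and conclude $\sqrt{x}$ is $\ell$-invertible by Lemma~\ref{Lem 2.5}(2). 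For $(5)\Rightarrow(6)$: if $\sqrt{x}$ is $\ell$-invertible for each nonzero compact $x$, then in particular $\sqrt{x}$ is $\ell$-principal hence compact (by Lemma~\ref{Lem 2.7}(7), since $\ell$-invertible elements are weak meet and weak join principal); and $x=\sqrt{x}\cdot x$? No---rather, one shows $x$ itself is $\ell$-invertible: apply Corollary~\ref{Cor 3.2} with $y=\sqrt{x}$ to get $x=y_1\cdots y_n$ with $y_i$ $\ell$-radical, then note $\sqrt{y_i}=y_i$ and each $y_i\geq\sqrt{x}$ is... this needs that each $y_i$ is $\ell$-invertible. Since each $y_i$ is an $\ell$-radical element lying above the $\ell$-invertible zero-dimensional element $\sqrt{x}$, and the nonzero $\ell$-primes above $y_i$ are among the (maximal) $\ell$-primes above $\sqrt{x}$, one gets $y_i=\sqrt{(\text{compact})}$ and hence $\ell$-invertible by hypothesis (5); then Lemma~\ref{Lem 2.5}(2) gives $x$ $\ell$-invertible. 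Finally, given $x$ compact, $\sqrt{x}$ is $\ell$-invertible hence compact.

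For $(6)\Rightarrow(4)$: given arbitrary $x\in\mathcal{L}$, write $x=\bigvee_\alpha x_\alpha$ as a join of compact (indeed $\ell$-principal, since $\mathcal{L}$ is principally generated) elements. By (6), each nonzero compact element is $\ell$-invertible. One reduces to showing $x$ is a product of $\ell$-radical elements of the desired ascending form; the idea is to invoke Theorem~\ref{Thm 3.1}, whose hypotheses demand $x$ zero-dimensional and each maximal element above $x$ lying above a compact, weak meet principal, zero-dimensional $\ell$-radical element. The dimension bound comes from Lemma~\ref{Lem 4.4}(1), once we know (6) implies $\mathcal{L}$ is a radical factorial lattice---but that is circular, so instead I would derive $\dim\mathcal{L}\leq 1$ directly from (6): every nonzero compact element being $\ell$-invertible makes $\mathcal{L}$ a Pr\"ufer (in fact Dedekind-like) lattice, and the $\ell$-radical of a nonzero compact element being compact, hence $\ell$-invertible, hence (by Lemma~\ref{Lem 2.8} at minimal primes) forces minimal $\ell$-primes over nonzero compacts to be maximal, killing chains of $\ell$-primes of length $>2$. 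Then for any maximal $m$ above $x$, pick nonzero compact $c\leq m$, so $\sqrt{c}$ is a compact $\ell$-invertible (hence weak meet principal) zero-dimensional $\ell$-radical element $\leq m$; apply Theorem~\ref{Thm 3.1}. The step I expect to be the main obstacle is $(3)\Rightarrow(5)$ (equivalently extracting $\ell$-invertibility of $\sqrt{x}$ from mere existence of $\ell$-invertible $\ell$-radical elements below maximal elements): one must run a global compactness/localization argument, carefully mimicking Claims 1--3 of Theorem~\ref{Thm 3.1}, to identify $\sqrt{x}$ with a finite product of the given $\ell$-invertible $\ell$-radical elements rather than just bounding it between powers of maximal elements locally.
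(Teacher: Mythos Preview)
Your cycle $(1)\Rightarrow(2)\Rightarrow(3)\Rightarrow(5)\Rightarrow(6)\Rightarrow(4)\Rightarrow(1)$ differs from the paper's, which runs $(1)\Rightarrow(2)\Rightarrow(3)\Rightarrow(4)\Rightarrow(1)$ together with $(4)\Rightarrow(6)\Rightarrow(5)\Rightarrow(3)$. The crucial organizational difference is that the paper gets $(3)\Rightarrow(4)$ in one line from Theorem~\ref{Thm 3.1} (every maximal element above a nonzero $x$ contains an $\ell$-invertible $\ell$-radical element, which is automatically compact, weak meet principal and zero-dimensional since $\dim\mathcal{L}\leq 1$), and then $(4)\Rightarrow(6)$ falls out of Lemma~\ref{Lem 4.4}(3) applied to the now-established radical factorial lattice. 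All the genuine difficulty is quarantined in $(5)\Rightarrow(3)$, where three claims are needed to force $\dim\mathcal{L}\leq 1$ from the hypothesis on radicals of compact elements. By contrast, your route spreads this same hard content across three implications and leaves gaps in each.

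Concretely, your $(5)\Rightarrow(6)$ has two unaddressed issues. First, to invoke Corollary~\ref{Cor 3.2} with $y=\sqrt{x}$ you need $\sqrt{x}$ to be zero-dimensional, and nothing short of the paper's Claim~2 in $(5)\Rightarrow(3)$ (minimal $\ell$-primes over nonzero compacts are minimal nonzero $\ell$-primes) gives you this. Second, your justification that each factor $y_i$ is $\ell$-invertible---``$y_i=\sqrt{(\text{compact})}$''---is not valid as stated: an $\ell$-radical element above $\sqrt{x}$ is not, on its face, the radical of any compact element. This step can be salvaged by an induction (since $y_1=\sqrt{x}$ is $\ell$-invertible, the cofactor $y_2\cdots y_n$ is compact because $y_1$ is cancellative and $x$ is compact; then $y_2=\sqrt{y_2\cdots y_n}$ is $\ell$-invertible by (5); and so on), but that is not the argument you gave. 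Similarly, your $(6)\Rightarrow(4)$ needs the dimension bound, and your sketch (``$\sqrt{c}$ $\ell$-invertible forces minimal $\ell$-primes over $c$ to be maximal via Lemma~\ref{Lem 2.8}'') is not a proof: Lemma~\ref{Lem 2.8} only identifies $\sqrt{c_p}=p$ and says nothing about maximality. You would again need the full localization argument of the paper's Claims~2--3. Finally, your proposed direct $(3)\Rightarrow(5)$ is unnecessary work: once you observe $(3)\Rightarrow(4)$ via Theorem~\ref{Thm 3.1}, the route $(4)\Rightarrow(6)\Rightarrow(5)$ is short, and the implication you flagged as ``the main obstacle'' dissolves.
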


\begin{proof} (1) $\Rightarrow$ (2) This is an immediate consequence of Lemma~\ref{Lem 4.4}(1).

(2) $\Rightarrow$ (3) Let $m\in\mbox{\rm Max}(\mathcal{L})$, and let $x$ be an $\ell$-invertible element with $x\leq m$. By (2), $x=x_1\cdots x_k$ for some $\ell$-radical elements $x_1,\ldots,x_k$. Since $x$ is $\ell$-invertible, we have by Lemma~\ref{Lem 2.5} that so is each $x_i$. Since $m$ is $\ell$-prime, there is $i$ such that $x_i\leq m$, which verifies (3).

(3) $\Rightarrow$ (4) We use Theorem~\ref{Thm 3.1} to prove this implication. Let $0 \ne y\in\mathcal{L}$, and let $m\in\mbox{\rm Max}(\mathcal{L})$ with $y\leq m$. By (3), the $\ell$-prime elements minimal over $y$ are maximal elements. Thus $y$ is zero-dimensional, and so by Theorem~\ref{Thm 3.1}, $y$ is a product of $\ell$-radical elements $y_1\leq y_2\leq\cdots\leq y_k$.

(4) $\Rightarrow$ (1) This is clear.

(4) $\Rightarrow$ (6) Let $x$ be a nonzero compact element. By Lemma~\ref{Lem 4.4}(3), $x$ is $\ell$-principal. Since $\mathcal{L}$ is a multiplicative lattice domain, we have that $x$ is $\ell$-invertible. Now by (4), $x=x_1\cdots x_k$ for some $\ell$-radical elements $x_1\leq\cdots\leq x_k$. Since $x$ is $\ell$-invertible, so is $x_1$ by Lemma~\ref{Lem 2.5}. Since $x_1=\sqrt{x}$, statement (6) follows.

(6) $\Rightarrow$ (5) This is obvious.

(5) $\Rightarrow$ (3) We prove this part by establishing a series of claims.

\medskip

{\noindent}{\textsc{Claim 1}}: Every nonzero $\ell$-prime element of $\mathcal{L}$ is above an $\ell$-invertible $\ell$-radical element.

\medskip
Let $p\in\mathcal{L}$ be a nonzero $\ell$-prime element. There is some nonzero $x\in\mathcal{L}^*$ such that $x\leq p$. Set $y=\sqrt{x}$. Then $y$ is an $\ell$-invertible $\ell$-radical element of $\mathcal{L}$ and $y\leq p$. This proves Claim 1.

\medskip

{\noindent}{\textsc{Claim 2}}: If $x\in\mathcal{L}^*$ is nonzero and $p\in\mathcal{L}$ is an $\ell$-prime element minimal above $x$, then $p$ is a minimal nonzero $\ell$-prime element of $\mathcal{L}$.

\medskip

Let $x\in\mathcal{L}^*$ be nonzero, $p\in\mathcal{L}$ an $\ell$-prime element minimal above $x$ and $q\in\mathcal{L}$ an $\ell$-prime element such that $0<q\leq p$. We have to show that $p\leq q$. Observe that $\mathcal{L}_p$ is a principally generated $C$-lattice domain and $\mbox{\rm Max}(\mathcal{L}_p)=\{p_p\}$. Moreover, if $w\in\mathcal{L}_p$ is a nonzero compact element, then $w=t_p$ for some nonzero compact element $t\in\mathcal{L}$, and hence $\sqrt{w}=\sqrt{t_p}=\sqrt{t}_p$ is an $\ell$-invertible element of $\mathcal{L}_p$ (since $\sqrt{t}$ is an $\ell$-invertible element of $\mathcal{L}$). Therefore, the $\ell$-radical of every nonzero compact element of $\mathcal{L}_p$ is $\ell$-invertible. Also note that $x_p$ is a compact element of $\mathcal{L}_p$, $p_p$ is an $\ell$-prime element of $\mathcal{L}_p$ that is minimal above $x_p$, and $q_p$ is an $\ell$-prime element of $\mathcal{L}_p$ such that $0_p<q_p\leq p_p$. Therefore, we can assume without restriction that $\mbox{\rm Max}(\mathcal{L})=\{p\}$.

Since $p$ is minimal above $x$, we have that $p=\sqrt{x}$ is an $\ell$-invertible element of $\mathcal{L}$. By Claim 1 there is an $\ell$-invertible $\ell$-radical element $y\in\mathcal{L}$ such that $y\leq q$. We have that $y\leq p$, and since $p$ is weak meet principal, there is some $b\in\mathcal{L}$ such that $y=pb$. Assume that $b\not=1$. Since $b\leq p$ and $p$ is weak meet principal, there is some $d\in\mathcal{L}$ such that $b=pd$. Consequently, $(pd)^2\leq p^2d=y$, and since $y$ is $\ell$-radical, we have that $pd\leq y$. Therefore, $b=pd\leq y\leq b$. It follows that $y=b$, and thus $y=py$. Since $y$ is cancellative, we infer that $p=1$, a contradiction. This implies that $b=1$, and hence $p=y\leq q$, which proves Claim 2.

\medskip

{\noindent}{\textsc{Claim 3}}: For each $m\in\mbox{\rm Max}(\mathcal{L})$ and all $\ell$-invertible $x,y\in\mathcal{L}$ we have that $x_m$ and $y_m$ are comparable.

\medskip

Assume to the contrary that there are some $m\in\mbox{\rm Max}(\mathcal{L})$ and $\ell$-invertible elements $x,y\in\mathcal{L}$ such that $x_m$ and $y_m$ are not comparable. Observe that $x,y\leq m$. Clearly, $x\vee y$ is nonzero and compact, and thus $z_0=\sqrt{x\vee y}$ is $\ell$-invertible and $\ell$-radical. Since $x\leq z_0$, $y\leq z_0$ and $z_0$ is weak meet principal, there are some $v,w\in\mathcal{L}$ such that $x=vz_0$ and $y=wz_0$. We have that $v$ and $w$ are $\ell$-invertible. If $v_m$ and $w_m$ are comparable, say $v_m\leq w_m$, then $x_m=(v_m(z_0)_m)_m\leq (w_m(z_0)_m)_m=y_m$, a contradiction. Therefore, $v_m$ and $w_m$ are not comparable.

Set $x_0=x$ and $y_0=y$. Using the observation before, we can recursively construct a sequence $(z_i)_{i\in\mathbb{N}_0}$ of $\ell$-invertible $\ell$-radical elements of $\mathcal{L}$ and sequences $(x_i)_{i\in\mathbb{N}_0}$ and $(y_i)_{i\in\mathbb{N}_0}$ of $\ell$-invertible elements of $\mathcal{L}$ such that $z_i=\sqrt{x_i\vee y_i}$, $x_i=x_{i+1}z_i$ and $y_i=y_{i+1}z_i$ for all $i\in\mathbb{N}_0$. Note that if $i\in\mathbb{N}_0$, then $(x_i)_m$ and $(y_i)_m$ are not comparable and, in particular, $x_i,y_i\leq m$ and $z_i\leq z_{i+1}\leq m$. Observe that $z_0$ is compact, and hence there is some $k\in\mathbb{N}$ such that $z_0^k\leq x_0\vee y_0$. We infer that $z_0^k\leq x_0\vee y_0=z_0\cdots z_{k-1}(x_k\vee y_k)\leq z_0\cdots z_k$. Since $z_k\not=1$, there is some $\ell$-prime element $p\in\mathcal{L}$ that is minimal above $z_k$. Since $z_k$ is nonzero and compact, it follows by Claim 2 that $p$ is a minimal nonzero $\ell$-prime element of $\mathcal{L}$. In particular, if $0\leq i\leq k$, then $p$ is minimal above $z_i$ and $(z_i)_p=p$. Consequently, $(p^k)_p=(z_0^k)_p\leq (z_0\cdots z_k)_p=(p^{k+1})_p$, and thus $(p^k)_p=(p^{k+1})_p$. Note that $(z_0)_p=p$ and $(z_0^k)_p=(z_0^{k+1})_p$. Observe that $(z_0)_p$ is an $\ell$-invertible element of $\mathcal{L}_p$. Therefore, $(z_0)_p=1$ and $z_0\leq z_k\leq p$, a contradiction. This proves Claim 3.

\medskip

Now let $p$ be an $\ell$-prime element and $m\in\mbox{\rm Max}(\mathcal{L})$ such that $0<p\leq m$. It is sufficient to show that $b\leq p$ for all nonzero $b\in\mathcal{L}^*$ such that $b\leq m$. Let $b\in\mathcal{L}^*$ be nonzero such that $b\leq m$. There is some $\ell$-prime element $q\in\mathcal{L}$ that is minimal above $b$ such that $q\leq m$. Observe that $p=\bigvee\{c_m\mid c\in\mathcal{L}\textnormal{ is }\ell\textnormal{-invertible and }c\leq p\}$ and $q=\bigvee\{c_m\mid c\in\mathcal{L}\textnormal{ is }\ell\textnormal{-invertible and }c\leq q\}$. Assume that $p$ and $q$ are not comparable. Then there are some $\ell$-invertible elements $c,d\in\mathcal{L}$ such that $c_m\leq p$, $c_m\not\leq q$, $d_m\not\leq p$ and $d_m\leq q$. We infer that $c_m$ and $d_m$ are not comparable, which contradicts Claim 3. Therefore, $p$ and $q$ are comparable. If $p\leq q$, then since $q$ is a minimal nonzero $\ell$-prime element by Claim 2, it follows that $p=q$. In any case we have that $b\leq q\leq p$.
\end{proof}

We will prove in Corollary~\ref{Cor 6.7} that the decomposition $x=x_1\cdots x_n$ in statement (4) of Theorem~\ref{Thm 4.6} is unique if $x$ is nonzero and the $x_i$ are proper. The following remark was communicated to us by T. Dumitrescu.

\begin{remark}\label{Rem 4.7} Let $\mathcal{L}$ be a principally generated radical factorial $C$-lattice domain. Then $\mathcal{L}$ is isomorphic to the lattice of ideals of some SP-domain.
\end{remark}

\begin{proof} By Lemma~\ref{Lem 4.4}(2), $\mathcal{L}$ is locally totally ordered. This implies that $\mathcal{L}_m$ is a modular lattice for each $m\in\mbox{\rm Max}(\mathcal{L})$, and hence $\mathcal{L}$ is a modular lattice. Now \cite[Theorem 3.4]{And} applies to show that $\mathcal{L}$ is isomorphic to the lattice of ideals of some Pr\"ufer domain $D$. Note that the $\ell$-radical elements of the lattice of ideals of $D$ are precisely the radical ideals of $D$. Since $\mathcal{L}$ is a radical factorial lattice, we infer that $D$ is an SP-domain.
\end{proof}

\begin{remark}\label{Rem 4.8} Note that the ``$\mathcal{L}$ is a principally generated lattice'' condition in Lemma~\ref{Lem 4.4} and Theorem~\ref{Thm 4.6} cannot be replaced by the condition that every element of $\mathcal{L}$ is a join of (weak) meet principal cancellative elements. We consider the monoid $H$ that is constructed in \cite[Example 4.2]{RR}, and we let $\mathcal{L}$ be the set of $t$-ideals of $H$. (The definition of $t$-ideals and the $t$-system of monoids can be found in \cite{RR}.)

Note that $H$ is an (additively written) cancellative monoid and $t$ is a finitary ideal system on $H$. By Lemma~\ref{Lem 8.1} we know that $\mathcal{L}$ is a $C$-lattice. It follows by \cite[Example 4.2]{RR} that $\mathcal{L}$ is a radical factorial lattice such that $\dim\mathcal{L}=2$ and every nonzero element of $\mathcal{L}$ is cancellative. In particular, $\mathcal{L}$ is a multiplicative lattice domain. Moreover, if $I\in\mathcal{L}$, then $I=\bigvee\{x+H\mid x\in I\}$ and $x+H$ is a meet principal cancellative element of $\mathcal{L}$ for each $x\in H$.
\end{remark}

\section{Example: Upper semicontinuous functions}

The purpose of this section is to give a class of examples of radical factorial lattices arising in a topological context. The importance of this class becomes evident in the next section, where it is shown that all principally generated radical factorial $C$-lattice domains arise this way. In later sections, we interpret these topological results in the context of rings and monoids.

Recall that for a topological space $X$, a function $f:X\rightarrow\mathbb{N}_0={\mathbb{N}}\cup\{0\}$ is {\it upper semicontinuous} if $f^{-1}([n,\infty))$ is a closed set for all $n\in\mathbb{N}_0$. Our focus is on compactly supported upper semicontinuous functions taking values among the nonnegative integers ${\mathbb{N}}_0$. If $X$ is a Hausdorff space, then compact subsets of $X$ are closed and so we have that a function $f:X\rightarrow\mathbb{N}_0$ is compactly supported and upper semicontinuous if and only if $f^{-1}([k,\infty))$ is compact for all $k\in\mathbb{N}$. The compactness of the preimages here implies that such a function takes on only finitely many values. A convenient decomposition of such functions is given in Lemma~\ref{Lem 5.2}(5) based on this observation.

\begin{definition}\label{Def 5.1} Let $X$ be a Hausdorff space. We define $U(X)$ to be the monoid of compactly supported upper semicontinuous functions $f:X\rightarrow\mathbb{N}_0$ with binary operation given by pointwise addition of functions. We define an order $\leq_d$ on $U(X)$ dual to the usual one by $f\leq_d g$ iff $f(x)\geq g(x)$ for all $x\in X$. With this order, the zero function $0$ is the top element of $U(X)$ and is an additive identity for the monoid $(U(X),+)$. For technical reasons, it will be convenient to introduce a bottom element $b$ to this partially ordered set. We define $b:X\rightarrow\mathbb{N}_0\cup\{\infty\}$ by $b(x)=\infty$ for each $x\in X$. Thus $U_b(X):=U(X)\cup\{b\}$ is the partially ordered set with this bottom element appended. Observe that $b+f=f+b=b$ for all $f\in U_b(X)$, and hence $U_b(X)$ is a monoid.
\end{definition}

Throughout this section, we denote the characteristic function of a subset $A$ of a set $X$ by $\mathbf{1}_A$; i.e., $\mathbf{1}_A(x)=1$ if $x\in A$ and $\mathbf{1}_A(x)=0$ if $x\not\in A$.

\begin{lemma}\label{Lem 5.2} Let $X$ be a Hausdorff space. With the order $\leq_d$, $U_b(X)$ is a multiplicative lattice domain with the following properties.
\begin{enumerate}
\item The meet $f\wedge_d g$ and join $f\vee_d g$ are given for all $x\in X$ by
\begin{center} $(f\wedge_d g)(x)=\max\{f(x),g(x)\}$ and $(f\vee_d g)(x)=\min\{f(x),g(x)\}$.\end{center}
\item If $\mathcal{F}$ is a nonempty subset of $U(X)$, then $\bigvee_d\mathcal{F}$ exists and is given for all $x\in X$ by \begin{center}$(\bigvee_d\mathcal{F})(x)=\min\{f(x)\mid f\in\mathcal{F}\}.$\end{center}
\item If $\mathcal{F}$ is a subset of $U(X)$ that is bounded below in $U(X)$ with respect to $\leq_d$, then $\bigwedge_d\mathcal{F}$ exists and is given by $\bigwedge_d\mathcal{F}=\bigvee_{d}{\mathcal{G}}$, where \begin{center} ${\mathcal{G}}=\{g\in U(X)\mid g(x)\geq f(x)$ {\rm{for all}} $x\in X $ {\rm{and}} $f\in\mathcal{F}\}.$\end{center}
\item If $\mathcal{F}$ is a subset of $U(X)$ that is not bounded below in $U(X)$, then $\bigwedge_d\mathcal{F} = b$ in $U_b(X)$.
\item A function $f:X\rightarrow {\mathbb{N}}_0$ is in $U(X)$ iff $$f=k_0\mathbf{1}_{C_0}+\sum_{i=1}^n (k_{i}-k_{i-1})\mathbf{1}_{C_i},$$ where $k_0<k_1<\cdots<k_n$ are positive integers and $C_0\supseteq C_1\supseteq\cdots\supseteq C_n$ are compact subsets of $X$.
\end{enumerate}
\end{lemma}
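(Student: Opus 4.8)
\textbf{Proof proposal for Lemma~\ref{Lem 5.2}.}

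The plan is to verify the multiplicative-lattice axioms and the five itemized claims essentially by reducing everything to pointwise statements about functions $X \to \mathbb{N}_0 \cup \{\infty\}$, where the order $\leq_d$ is the reverse of the usual pointwise order. First I would dispose of (1): for $f, g \in U(X)$, the pointwise maximum $\max\{f,g\}$ is again compactly supported and upper semicontinuous (its preimage of $[k,\infty)$ is the union of the two compact preimages, hence compact), so it lies in $U(X)$; it is clearly the greatest lower bound in $\leq_d$, giving $f \wedge_d g$. The pointwise minimum $\min\{f,g\}$ is likewise in $U(X)$ (preimage of $[k,\infty)$ is the intersection of two compact sets in a Hausdorff space, hence compact), and it is the least upper bound, giving $f \vee_d g$; the cases involving $b$ are immediate from $b$ being the bottom element. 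This establishes that $U_b(X)$ is a lattice.

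Next I would prove (2)--(4), which together show $U_b(X)$ is a \emph{complete} lattice. For (2), given nonempty $\mathcal{F} \subseteq U(X)$, set $g(x) = \min\{f(x) \mid f \in \mathcal{F}\}$, which is well-defined since each $f$ takes only finitely many values and $\mathcal{F}$ is nonempty; one checks $g \in U(X)$ by noting $g^{-1}([k,\infty)) = \bigcap_{f \in \mathcal{F}} f^{-1}([k,\infty))$ is an intersection of compact subsets of a Hausdorff space, hence compact. Then $g$ is pointwise $\leq$ every $f$, i.e.\ $g \geq_d f$ is false---rather $f \leq_d g$ for all $f$, and any common $\leq_d$-upper bound $h$ satisfies $h \leq f$ pointwise for all $f$, hence $h \leq g$ pointwise, i.e.\ $g \leq_d h$; so $g = \bigvee_d \mathcal{F}$. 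For (3): if $\mathcal{F}$ is bounded below in $(U(X), \leq_d)$, the set $\mathcal{G}$ of its $\leq_d$-lower bounds inside $U(X)$ is nonempty, so $\bigvee_d \mathcal{G}$ exists by (2) and is routinely checked to be the greatest lower bound of $\mathcal{F}$. For (4): if $\mathcal{F}$ is unbounded below in $U(X)$, then no element of $U(X)$ is a $\leq_d$-lower bound, but $b$ is, and it is the only candidate, so $\bigwedge_d \mathcal{F} = b$. Together with (1) this gives completeness; the top element is the zero function $0$ (the additive identity) and the bottom is $b$, matching Definition~\ref{Def 5.1}.

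For the multiplicative-lattice structure, I would observe that the monoid operation is $+$ with identity $0$, which is the top element, so axiom (c) of Definition~\ref{Def 2.1} holds; axiom (a) is completeness, just shown. Axiom (b), the join-distributivity $f + \bigvee_d \mathcal{F} = \bigvee_d (f + \mathcal{F})$, reduces pointwise to $f(x) + \min\{g(x) \mid g\} = \min\{f(x) + g(x) \mid g\}$, which holds in $\mathbb{N}_0 \cup \{\infty\}$; the case $\mathcal{F} = \emptyset$ gives both sides equal to $b$, and cases where $b \in \mathcal{F}$ or a term is $b$ are handled by $f + b = b$. To see $U_b(X)$ is a lattice \emph{domain}, note $f + g = 0$ forces $f(x) + g(x) = 0$ for all $x$, hence $f = g = 0$; more precisely, the condition ``$ab = 0$ implies $a$ or $b$ is $0$'' in Definition~\ref{Def 2.1} uses $0$ for the bottom element $b$, so I would check $f + g = b$ implies $f = b$ or $g = b$, which is clear since $f + g = b$ means some $f(x) + g(x) = \infty$, forcing $f(x) = \infty$ or $g(x) = \infty$, but an upper semicontinuous $\mathbb{N}_0$-valued function never takes $\infty$, so indeed $f = b$ or $g = b$. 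Finally for (5): given $f \in U(X)$, it takes finitely many positive values $0 < k_1 < \cdots < k_n$ on its support (finiteness because $f^{-1}([1,\infty))$ is compact and $f$ is upper semicontinuous, so the image is finite); setting $k_0 = 0$ if needed, wait---the statement uses $k_0 < k_1 < \cdots$ with $k_0$ positive and $C_0 \supseteq \cdots$, so I would take $C_i = f^{-1}([k_i, \infty))$, each compact by upper semicontinuity and compact support, nested decreasingly, and verify the telescoping identity $f = k_0 \mathbf{1}_{C_0} + \sum_{i=1}^n (k_i - k_{i-1}) \mathbf{1}_{C_i}$ pointwise: at a point where $f(x) = k_j$ the right side sums to $k_0 + (k_1 - k_0) + \cdots + (k_j - k_{j-1}) = k_j$. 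Conversely any such sum is in $U(X)$ since each $\mathbf{1}_{C_i}$ is (the $C_i$ are compact hence closed in a Hausdorff space, and $\mathbf{1}_{C_i}^{-1}([1,\infty)) = C_i$), and $U(X)$ is closed under addition.

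The main obstacle I anticipate is bookkeeping around the bottom element $b$ and the reversal of order: one must consistently track which of ``$\vee$ vs.\ $\wedge$'' and ``top vs.\ bottom'' corresponds to ``pointwise min/max'' and ``$0$ vs.\ $\infty$,'' and handle the degenerate cases ($\emptyset$, presence of $b$, unbounded families) separately in each axiom. The genuinely substantive point---used repeatedly---is that finite unions \emph{and arbitrary intersections} of compact sets in a Hausdorff space are compact, which is exactly what makes both $\wedge_d$ and arbitrary $\bigvee_d$ stay inside $U(X)$; everything else is routine verification of pointwise arithmetic identities in $\mathbb{N}_0 \cup \{\infty\}$.
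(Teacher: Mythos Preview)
Your proposal is correct and follows essentially the same route as the paper: both verify (1)--(4) via the fact that finite unions and arbitrary intersections of compact sets in a Hausdorff space are compact, check the multiplicative-lattice and domain axioms pointwise, and prove (5) by setting $C_i = f^{-1}([k_i,\infty))$ for the finitely many positive values $k_0 < \cdots < k_n$ of $f$ and telescoping. The paper is terser (e.g., it handles the domain property with the one-line remark that $f+g\in U(X)$ whenever $f,g\in U(X)$, so $b$ is $\ell$-prime), but the substance is identical.
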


\begin{proof} That $U_b(X)$ is a multiplicative lattice (written additively) follows from (1)--(4), which we will establish below, and the observation that addition (since it is pointwise) commutes with the arbitrary join defined in (2). That the bottom element $b$ of $U_b(X)$ is $\ell$-prime is a consequence of the fact that $f+g\in U(X)$ for all $f,g\in U(X)$. Thus, once we have established (1)--(4), we have that  $U_b(X)$ is a multiplicative lattice domain for which the top element of $U_b(X)$ is the zero function.

(1) Define a function $h$ on $X$ by $h(x)=\max\{f(x),g(x)\}$ for all $x\in X$. For each $k\in\mathbb{N}$, we have that $h^{-1}([k,\infty))=f^{-1}([k,\infty))\cup g^{-1}([k,\infty))$. As a union of two compact sets, $h^{-1}([k,\infty))$ is also compact. Therefore, $h\in U(X)$. It is clear that $h$ is the greatest lower bound of $f$ and $g$ with respect to $\leq_d$.
The proof that the join exists and is as claimed is similar, using instead the fact that the intersection of compact sets is closed, hence compact.

(2) The proof of (2) is a straightforward extension of the argument in (1).

(3) This follows from (2).

(4) This is clear.

(5) Let $f\in U(X)$. Since $f$ is compactly supported,
$f$ is bounded, and so $f$ takes on only finitely many positive values, say $k_0<k_1<\cdots<k_n$ are the positive values of $f$. For each $i$, let $C_i=f^{-1}([k_i,\infty))$. Since $f$ is compactly supported and each $k_i$ is postive, each closed set $C_i$ is compact. Hence each characteristic function $\mathbf{1}_{C_i}$ is in $U(X)$ and ${C_0}\supseteq\cdots\supseteq {C_n}$. Moreover,
$$f=k_0\mathbf{1}_{C_0}+\sum_{i=1}^n (k_{i}-k_{i-1})\mathbf{1}_{C_i}.$$
Conversely, any function of this form is easily seen to be upper semicontinuous and compactly supported.
\end{proof}

\begin{theorem}\label{Thm 5.3} If $X$ is a Hausdorff space, then $U_b(X)$ is a radical factorial lattice domain. \end{theorem}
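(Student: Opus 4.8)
The plan is to read the radical factorization straight off the decomposition in Lemma~\ref{Lem 5.2}(5). By Lemma~\ref{Lem 5.2} we already know $U_b(X)$ is a multiplicative lattice domain, so the only thing to verify is that every element is a product of $\ell$-radical elements, where "product" means the monoid operation of $U_b(X)$, namely pointwise addition of functions.

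First I would identify the $\ell$-radical elements that will serve as the factors. For a compact set $C\subseteq X$ I claim $\mathbf{1}_C$ is $\ell$-radical, i.e.\ $\sqrt{\mathbf{1}_C}=\mathbf{1}_C$ with respect to $\leq_d$. Unwinding Notation~\ref{Not 2.3}: if $h\in U(X)$ and $h^n\leq_d\mathbf{1}_C$, then $n\,h(x)\geq\mathbf{1}_C(x)$ for all $x$, so $h(x)\geq 1$ on $C$ (as $h$ is $\mathbb{N}_0$-valued), whence $h\leq_d\mathbf{1}_C$; and $\mathbf{1}_C$ itself lies in the defining set with $n=1$. Since $\bigvee_d$ is the pointwise minimum, $\sqrt{\mathbf{1}_C}=\mathbf{1}_C$. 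The same kind of bookkeeping shows $\sqrt{b}=b$ (no $h\in U(X)$ satisfies $h^n\leq_d b$, so the only candidate is $b$ itself) and $\sqrt{0}=0$, so the bottom element $b$ and the top element $0$ are $\ell$-radical as well.

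Next, given $f\in U(X)$ with $f\neq 0$, I would invoke Lemma~\ref{Lem 5.2}(5) to write
$$f=k_0\mathbf{1}_{C_0}+\sum_{i=1}^n (k_{i}-k_{i-1})\mathbf{1}_{C_i},$$
with $k_0<\cdots<k_n$ positive integers and $C_0\supseteq\cdots\supseteq C_n$ compact. In $U_b(X)$ the monoid operation is pointwise addition, so $k_0\mathbf{1}_{C_0}$ is literally the $k_0$-fold product of $\mathbf{1}_{C_0}$ with itself, and likewise each $(k_i-k_{i-1})\mathbf{1}_{C_i}$ is a repeated product of $\mathbf{1}_{C_i}$; since $k_0$ and each $k_i-k_{i-1}$ is a positive integer, this exhibits $f$ as a finite product of the $\ell$-radical elements $\mathbf{1}_{C_i}$. (If one wants the ordered form of Theorem~\ref{Thm 4.6}(4), note that $C_0\supseteq\cdots\supseteq C_n$ forces $\mathbf{1}_{C_0}\leq_d\cdots\leq_d\mathbf{1}_{C_n}$.) Combining this with the two degenerate cases $f=0$ and $f=b$, which are $\ell$-radical by the previous paragraph, every element of $U_b(X)$ is a product of $\ell$-radical elements; together with Lemma~\ref{Lem 5.2} this gives that $U_b(X)$ is a radical factorial lattice domain.

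There is essentially no substantive obstacle here: the entire argument is a transcription of Lemma~\ref{Lem 5.2}(5). The only points requiring care are the orientation of the dual order $\leq_d$ when computing $\sqrt{\,\cdot\,}$ (so that "smallest pointwise" reads as "largest in $\leq_d$", hence as the relevant join) and the observation—immediate once stated—that a positive-integer coefficient in the decomposition of Lemma~\ref{Lem 5.2}(5) is exactly an iterated product in the monoid $(U(X),+)$.
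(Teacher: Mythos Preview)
Your proposal is correct and follows essentially the same route as the paper: invoke Lemma~\ref{Lem 5.2} for the lattice-domain structure, verify that $\mathbf{1}_C$ is $\ell$-radical for compact $C$ by the obvious pointwise check, and then read off the factorization from Lemma~\ref{Lem 5.2}(5). The only differences are cosmetic---you separate out the trivial cases $f=0$ and $f=b$ and spell out that integer coefficients correspond to iterated products---so there is nothing substantive to compare.
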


\begin{proof} By Lemma~\ref{Lem 5.2}, $U_b(X)$ is a multiplicative lattice domain. We claim that $\mathbf{1}_A$ is an $\ell$-radical element of $U_b(X)$ for each compact subset $A$ of $X$. Let $A$ be a compact subset of $X$. Then $\mathbf{1}_A\in U_b(X)$. To see that $\mathbf{1}_A$ is $\ell$-radical, suppose $f\in U_b(X)$ and $nf\leq_d\mathbf{1}_A$ for some $n\in\mathbb{N}$. Then $1=\mathbf{1}_A(x)\leq nf(x)$ for all $x\in A$. Therefore, for each $x\in A$, $f(x)\ne 0$ and hence $1=\mathbf{1}_A(x)\leq f(x)$. It follows that $\mathbf{1}_A(x)\leq f(x)$ for all $x\in X$ and hence $f\leq_d\mathbf{1}_A$. Thus $\mathbf{1}_A$ is an $\ell$-radical element of $U_b(X)$. Applying Lemma~\ref{Lem 5.2}(5), we obtain that $U_b(X)$ is a radical factorial lattice.
\end{proof}

\begin{remark}\label{Rem 5.4} The proof of Theorem~\ref{Thm 5.3} shows that for each compact subset $A$ of $X$, ${\bf 1}_A$ is an $\ell$-radical element of $U(X)$. The converse is also true: Suppose $g$ is an $\ell$-radical element of $U(X)$. If $g=0$, then $g$ is the characteristic function of the empty set. Suppose $g\ne 0$. Since $g$ is bounded, $n=\max\{g(x)\mid x\in X\}$ exists. Since the values of $g$ are nonnegative integers, to prove $g$ is a characteristic function of a closed set, it suffices to show that $n=1$. Let $A=g^{-1}([1,\infty))$. Since $g$ is upper semicontinuous, $A$ is closed in $X$. Now $g(x)\leq n\mathbf{1}_A(x)$ for all $x\in X$. Thus $n\mathbf{1}_A\leq_d g$, and since $g$ is $\ell$-radical, we have that $\mathbf{1}_A\leq_d g$. But then $g(x)\leq 1$ for all $x\in A$. Therefore, $n=1$. Moreover, since $g$ is compactly supported, $A$ is a closed subset of a compact set and hence is compact.
\end{remark}

\section{Representation of radical factorial lattices}

In this section we show that every principally generated radical factorial $C$-lattice domain can be represented as the multiplicative lattice $U_b(X)$ of compactly supported upper semicontinuous functions studied in the last section. Using this fact, we show in  Corollary~\ref{Cor 6.6}
 that the structure of a principally generated radical factorial $C$-lattice domain is determined entirely by the topology of the space of maximal elements of $\mathcal{L}$. Using this description, we obtain in Corollary~\ref{Cor 6.7} a uniqueness result for the representation of elements as products of $\ell$-radical elements in such radical factorial lattices.

\begin{definition}\label{Def 6.1} For a $C$-lattice $\mathcal{L}$, we define $X_\mathcal{L}=\mbox{\rm Max}(\mathcal{L})$, and we view $X_\mathcal{L}$ as a topological space with respect to the {\it inverse topology} on $X_\mathcal{L}$. This topology has as a basis of open sets of the form $V(x):=\{m\in X_\mathcal{L}\mid x\leq m\}$, where $x$ is a compact element of $\mathcal{L}$. Thus $X_\mathcal{L}$ has a basis of closed sets of the form $U(x):=\{m\in X_\mathcal{L}\mid x\not\leq m\}$, where $x$ is compact.
\end{definition}

For the next lemma, recall that a topological space is {\it zero-dimensional} if it has a basis of clopen sets.

\begin{lemma}\label{Pro 6.2} If $\mathcal{L}$ is a principally generated radical factorial $C$-lattice domain, then $X_{\mathcal{L}}$ is a zero-dimensional Hausdorff space.
\end{lemma}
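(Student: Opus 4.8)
The plan is to prove that $X_{\mathcal{L}}=\mbox{\rm Max}(\mathcal{L})$ is Hausdorff and zero-dimensional by exhibiting enough clopen sets in the inverse topology, using the radical factorial structure to separate maximal elements. Recall that the inverse topology has a basis of closed sets $U(x)=\{m\mid x\not\leq m\}$ for $x$ compact, equivalently a basis of open sets $V(x)=\{m\mid x\leq m\}$. Since $\mathcal{L}$ is principally generated and a multiplicative lattice domain, Lemma~\ref{Lem 4.4}(3) gives that every compact element is $\ell$-principal, hence $\ell$-invertible; and by Theorem~\ref{Thm 4.6}(6) the $\ell$-radical $\sqrt{x}$ of any nonzero compact $x$ is again compact (and $\ell$-invertible and $\ell$-radical). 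The key idea is that for such a compact $\ell$-radical element $y=\sqrt{x}$, the basic open set $V(y)$ is in fact clopen.

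First I would show $V(y)$ is closed whenever $y$ is a compact $\ell$-radical element. Since $y$ is $\ell$-invertible it is cancellative and weak meet principal; moreover $y$ is zero-dimensional (the $\ell$-prime elements above $y$ are maximal by Lemma~\ref{Lem 4.4}(1), since $\dim\mathcal{L}\leq 1$ and $y\neq 0$). For a maximal element $m$ with $y\leq m$, Lemma~\ref{Lem 2.8} gives $\sqrt{y_m}=m$; since $y$ is $\ell$-radical one checks $y_m=m$, so $y$ behaves like a maximal element locally exactly at the maximal elements above it. The cleanest route is to produce, for such $y$, a compact element $z$ with $V(y)=U(z)$, i.e. $\{m : y\leq m\}=\{m : z\not\leq m\}$. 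Concretely, set $a=\bigvee_{\alpha}(\sqrt{y}:y_{\alpha})$-style arguments as in Claim 3 of Theorem~\ref{Thm 3.1}: more directly, since $1$ is compact and the maximal elements above $y$ form the complement we can use that $y$ is weak meet principal to find a complement. Alternatively and more simply: because $y$ is $\ell$-invertible and $\ell$-radical with $\dim\mathcal{L}\le 1$, the element $(0:y)$... — the safest concrete step is to mimic the proof of Theorem~\ref{Thm 3.1}, Claim 3, to find a compact $w$ with $w\vee y=1$ and $w\not\le m$ exactly when $y\le m$, giving $V(y)=U(w)$ closed.

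Next I would show these sets form a basis. Given any maximal $m$ and any basic open neighborhood $V(x)\ni m$ with $x$ compact nonzero, pass to $y=\sqrt{x}$: then $x\leq y$ so $V(y)\subseteq V(x)$, and $x\leq m$ implies $\sqrt{x}\leq m$ (as $m$ is $\ell$-prime, in particular $\ell$-radical), so $m\in V(y)$. Thus $m\in V(y)\subseteq V(x)$ with $V(y)$ clopen, proving $X_{\mathcal{L}}$ has a basis of clopen sets. Finally, Hausdorffness: given distinct maximal elements $m\neq n$, say $m\not\leq n$ — wait, maximal elements are incomparable, so pick a compact $x\leq m$ with $x\not\leq n$ (possible since $\mathcal{L}$ is generated by compact, indeed $\ell$-principal, elements and $m=\bigvee\{x\in\mathcal{L}^*: x\le m\}\ne n$); then $y=\sqrt{x}$ gives a clopen $V(y)$ with $m\in V(y)$, $n\notin V(y)$, and its complement $U(w)$ is a clopen neighborhood of $n$ disjoint from $V(y)$. (One should double-check a clopen-separation argument also needs $n\in$ complement, which is automatic.)

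The main obstacle I expect is the verification that $V(y)$ is clopen for $y$ a compact $\ell$-radical element — i.e., producing the complementary compact element $w$ with $w\vee y=1$ and $\{m: w\le m\}=\{m: y\not\le m\}$. The natural approach is exactly the localization computation in Claim 3 of the proof of Theorem~\ref{Thm 3.1}: for each maximal $n$ with $y\not\le n$ one has $y_n=1$, so $(\,\text{something}\,:y)_n=1$; combined with compactness of $1$ this yields a finite join witnessing $w$. Getting the quantifiers right — that $w$ is compact, that $w\le m$ fails precisely on $V(y)$ — is the delicate bookkeeping, but it is essentially a rerun of machinery already in the paper, so no genuinely new difficulty arises beyond careful use of Lemma~\ref{Lem 2.7} and Lemma~\ref{Lem 2.8}.
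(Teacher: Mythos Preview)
Your overall strategy---reduce to compact $\ell$-radical elements $y$ via Theorem~\ref{Thm 4.6} and show that the basic open sets $V(y)$ are clopen---is the right shape, but the mechanism you propose for proving $V(y)$ is closed does not work. You want a single compact element $w$ with $V(y)=U(w)$, i.e.\ $w\le m$ precisely when $y\not\le m$. Such a $w$ need not exist: take $\mathcal{L}$ to be the lattice of ideals of $\mathbb{Z}$ and $y=(p)$ for a prime $p$. Then $V(y)=\{(p)\}$, and $U(w)=\{(q):w\not\subseteq (q)\}$ is always cofinite for nonzero compact $w$ (and equals $\emptyset$ for $w=\mathbb{Z}$), so no single $U(w)$ equals $\{(p)\}$. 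The set $\{(p)\}$ \emph{is} closed, but only as an infinite intersection $\bigcap_{q\ne p}U((q))$. The appeal to Claim~3 of Theorem~\ref{Thm 3.1} does not help here: that argument produces finitely many compact radical elements whose product lies below a given element, not a complementary element in the sense you need.

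The paper avoids this obstruction by reversing the order of the two steps. First it proves Hausdorffness directly and cheaply: for distinct $m,n\in X_{\mathcal{L}}$ one has $m\vee n=1$, and compactness of $1$ yields compact $x\le m$, $y\le n$ with $x\vee y=1$, so $V(x)$ and $V(y)$ are disjoint open neighbourhoods. With Hausdorffness in hand, it then shows that $V(y)$ is \emph{compact} for nonzero compact ($\ell$-radical) $y$ by a finite-subcover argument: given $V(y)\subseteq\bigcup_\alpha V(y_\alpha)$, one checks via localization that $(y\vee y_\alpha)\vee(y:(y\vee y_\alpha))=1$, chooses compact $z_\alpha\le (y:(y\vee y_\alpha))$ with $(y\vee y_\alpha)\vee z_\alpha=1$, rewrites $V(y\vee y_\alpha)=V(y)\cap U(z_\alpha)$, and then uses compactness of $1$ to extract a finite subcover. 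Since compact subsets of Hausdorff spaces are closed, $V(y)$ is clopen. Your Hausdorff argument, by contrast, presupposes the clopen result, so you cannot bootstrap it the way the paper does; establishing Hausdorffness first is what makes ``compact $\Rightarrow$ closed'' available.
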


\begin{proof} To see that $X_\mathcal{L}$ is Hausdorff, let $m,n\in X_\mathcal{L}$. Then $m\vee n=1$, so, since $1$ is compact and each of $x$ and $y$ is a join of compact elements, there exist compact elements $x\leq m$ and $y\leq n$ such that $x\vee y=1$. Therefore, $V(x)$ and $V(y)$ are disjoint open neighborhoods of $m$ and $n$, respectively, proving that $X_\mathcal{L}$ is Hausdorff.

To prove that $X_{\mathcal{L}}$ is a zero-dimensional space, it suffices to show that for each nonzero compact element $y$ in $\mathcal{L}$, the set $V(y)$ is a compact open subspace of $X_\mathcal{L}$.
Let $y$ be a nonzero compact element in $\mathcal{L}$. By Theorem~\ref{Thm 4.6}, $\sqrt{y}$ is again compact. Thus we may assume without loss of generality that $y$ is $\ell$-radical.
Suppose $\{y_\alpha\}$ is a collection of compact elements in $\mathcal{L}$ such that $V(y)\subseteq\bigcup_\alpha V(y_\alpha)$. Then $$V(y)=\bigcup_\alpha\left(V(y_\alpha)\cap V(y)\right)=\bigcup_\alpha V(y\vee y_\alpha).$$

We claim that for each $\alpha$, $(y\vee y_\alpha)\vee (y:(y\vee y_\alpha))=1$. Let $m\in X_{\mathcal{L}}$. If $y\vee y_\alpha\leq m$, then since $\dim\mathcal{L} \leq 1$ by Theorem~\ref{Thm 4.6} and $y$ is $\ell$-radical and nonzero, we have that $y_m=(y\vee y_\alpha)_m=m$. Since $y\vee y_\alpha$ is compact, Lemma~\ref{Lem 2.7}(6) implies that $(y:(y\vee y_\alpha))_m=(y_m:(y\vee y_\alpha)_m)=(m:m)=1$. Therefore, if $y\vee y_\alpha\leq m$, we have that $(y:(y\vee y_\alpha))\not\leq m$. Consequently, $(y\vee y_\alpha)\vee (y:(y\vee y_\alpha))=1$.

Since $1$ is compact, we may choose a compact element $z_\alpha\in\mathcal{L}$ such that $z_\alpha\leq (y:(y\vee y_\alpha))$ and $(y\vee y_\alpha)\vee z_\alpha=1$.

Thus $$(y\vee y_\alpha)z_\alpha\leq (y\vee y_\alpha)(y:(y\vee y_\alpha))\leq y,$$ and so we conclude that $V(y\vee y_\alpha)=V(y)\cap U(z_\alpha).$ Therefore, $$V(y)=\bigcup_\alpha \left(V(y)\cap U(z_\alpha)\right).$$ It follows that $V(y) \subseteq \bigcup_\alpha U(z_\alpha)$ and hence
$y\vee (\bigvee_\alpha z_\alpha)=1$. Since $1$ is compact, there exist $\alpha_1,\ldots,\alpha_k$ such that $y\vee z_{\alpha_1}\vee\cdots\vee z_{\alpha_k}=1.$ Therefore,
$$ V(y)=\bigcup_{i=1}^k\left(V(y)\cap U(z_{\alpha_i})\right)=\bigcup_{i=1}^k V(y\vee y_{\alpha_i}).$$ This proves that the open cover $\{V(y)\cap V(y_\alpha)\}$ of $V(y)$ has a finite subcover. We conclude that $V(y)$ is compact.
\end{proof}

The next lemma introduces a valuation-like map ${\rm v}_m:\mathcal{L}\rightarrow {\mathbb{N}_0}\cup\{\infty\}$ for each $m\in X_\mathcal{L}$. This map is used in Lemma~\ref{Lem 6.4} to define the functions from $X_\mathcal{L}$ to ${\mathbb{N}}_0$ that will be our primary interest in this section.

\begin{lemma}\label{Lem 6.3} Let $\mathcal{L}$ be a principally generated radical factorial $C$-lattice domain, and let $m\in X_\mathcal{L}$. For each $m\in X_\mathcal{L}$, define ${\rm v}_m:\mathcal{L}\rightarrow {\mathbb{N}_0}\cup\{\infty\}$ by ${\rm v}_m(x)=\sup\{k\in\mathbb{N}_0\mid x\leq m^k\}$ for each $x\in\mathcal{L}$. The following properties hold for all nonzero $x,y\in\mathcal{L}$.
\begin{itemize}
\item[(i)] ${\rm v}_m(x)$ is the unique $k\in\mathbb{N}_0$ for which $x_m=m^k$.
\item[(ii)] ${\rm v}_m(xy)={\rm v}_m(x)+{\rm v}_m(y)$.
\end{itemize}
\end{lemma}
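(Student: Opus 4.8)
\textbf{Proof proposal for Lemma~\ref{Lem 6.3}.}

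The plan is to reduce everything to the local lattice $\mathcal{L}_m$ and use the structure theorem for principally generated radical factorial $C$-lattice domains established in the previous section, especially Lemma~\ref{Lem 4.4}. First I would establish (i). Fix a nonzero $x\in\mathcal{L}$. By Lemma~\ref{Lem 4.4}(2), either $x_m = m^k$ for some $k\in\mathbb{N}_0$ or $x_m = 0_m$; since $\mathcal{L}$ is a multiplicative lattice domain and $x\neq 0$, the compact elements below $x$ give $0 < t \le x$ with $t$ compact, and then $0_m \neq t_m \le x_m$ shows $x_m \neq 0_m$ whenever $x\neq 0$ — so in fact $x_m = m^k$ for a unique $k$ (uniqueness because the powers $m^k$ are distinct: if $m^j = m^{j+1}$ then $m = 0_m$ by cancellativity using that $m$ is $\ell$-invertible locally, via Lemma~\ref{Lem 4.4}(3), contradicting $m\neq 0$ as $\mathcal{L}$ is a domain). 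Next I would check that this unique $k$ equals ${\rm v}_m(x) = \sup\{j\in\mathbb{N}_0 \mid x \le m^j\}$. For one direction: if $x \le m^j$ then $x_m \le (m^j)_m = m^j$ by Lemma~\ref{Lem 2.7}(4), and since the local elements are totally ordered (Lemma~\ref{Lem 4.4}(2)) and $m^j \le m^k$ iff $j \ge k$ (a power of a distinct chain), we get $m^k = x_m \le m^j$, forcing $j \le k$. For the reverse: it suffices to exhibit one $j$ with $x \le m^j$ and $j = k$, or rather to show $\sup\{j \mid x \le m^j\} \ge k$. Here is the subtle point — $x \le m^k$ need not hold globally even though $x_m = m^k$. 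I would instead argue that the supremum is attained and equals $k$ by localizing: $x \le m^j$ iff $x_n \le (m^j)_n$ for all $n\in\mathrm{Max}(\mathcal{L})$; for $n\neq m$, $m\not\le n$ so $(m^j)_n = 1$ and the inequality is automatic, while for $n = m$ it reads $m^k \le m^j$, i.e. $j \le k$. Hence $\{j \mid x \le m^j\} = \{0,1,\dots,k\}$ exactly, so ${\rm v}_m(x) = k$. This simultaneously proves (i) and shows the sup is a max.

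For (ii), let $x,y$ be nonzero; by the domain hypothesis $xy \neq 0$, so all three of ${\rm v}_m(x), {\rm v}_m(y), {\rm v}_m(xy)$ are finite integers, say $j, k, \ell$ with $x_m = m^j$, $y_m = m^k$, $(xy)_m = m^\ell$ by part (i). By Lemma~\ref{Lem 2.7}(2), $(xy)_m = (x_m y_m)_m = (m^j m^k)_m = (m^{j+k})_m = m^{j+k}$, where the last equality uses Lemma~\ref{Lem 2.7}(4) since $m$ is maximal. Thus $m^\ell = m^{j+k}$, and by the distinctness of the powers of $m$ (the uniqueness clause already proved in (i)), $\ell = j + k$, which is exactly ${\rm v}_m(xy) = {\rm v}_m(x) + {\rm v}_m(y)$.

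The main obstacle is the passage in (i) from the purely local statement $x_m = m^k$ to the global comparison $x \le m^k$ (and the failure of the naive guess $x = m^k$): one must route through the "verify locally, using Lemma~\ref{Lem 2.7}(5)" principle together with the fact that $(m^j)_n = 1$ whenever $m \not\le n$, so that the global inequality $x \le m^j$ is governed entirely by the single coordinate $n = m$. Once that is in hand, the rest is bookkeeping with Lemma~\ref{Lem 2.7}(2),(4) and the cancellativity built into Lemma~\ref{Lem 4.4}(3). A minor point to get right is that the powers $m, m^2, m^3, \dots$ are genuinely distinct: this is where one uses that $m$ localizes to an $\ell$-invertible (hence cancellative) element of $\mathcal{L}_m$ and that $m \neq 0$ because $\mathcal{L}$ is a multiplicative lattice domain, so $m^j = m^{j+1}$ would give $m^j(1) = m^j(m)$ locally, then $1 = m$ in $\mathcal{L}_m$, i.e. $m = m_m = 1$, contradicting $m \in \mathrm{Max}(\mathcal{L})$.
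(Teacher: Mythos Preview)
Your argument is correct and follows the same overall shape as the paper's, with two local variations worth noting. To rule out $x_m = 0_m$ you observe directly that $0_m = 0$ in a lattice domain and that any nonzero compact $t \le x$ gives $x_m \ge t_m \ge t > 0$; the paper instead takes an $\ell$-invertible $y \le x$ and derives a contradiction from $y_m = (y^2)_m$ via cancellativity of $y$. For the distinctness of the powers $m^k$, the paper invokes Theorem~\ref{Thm 4.6} to produce a \emph{global} $\ell$-invertible $\ell$-radical element $z$ with $z_m = m$ and then cancels in $\mathcal{L}$; you instead assert that $m$ itself is cancellative in $\mathcal{L}_m$. That is true, but your citation of Lemma~\ref{Lem 4.4}(3) is not quite the right hook---that statement concerns compact elements, and $m$ need not be compact. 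What you actually need is Remark~\ref{Rem 4.5} (every element of $\mathcal{L}_m$ is $\ell$-principal) together with the fact that $\mathcal{L}_m$ is again a lattice domain (since $0_m = 0$), so nonzero $\ell$-principal elements of $\mathcal{L}_m$ are cancellative. One small slip: in your first pass at distinctness you write ``then $m = 0_m$'', but the conclusion you want (and correctly state at the end) is $m_m = 1$, contradicting $m \in \mathrm{Max}(\mathcal{L})$. Your explicit computation $\{j \mid x \le m^j\} = \{0,\dots,k\}$ via Lemma~\ref{Lem 2.7}(5) and the observation $(m^j)_n = 1$ for maximal $n \ne m$ is a clean alternative to the paper's route, which simply notes $x \le x_m = m^k$ (so $k \le {\rm v}_m(x)$) and then shows that $k < {\rm v}_m(x)$ would force $m^k = m^{k+1}$. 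Part~(ii) is handled identically in both.
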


\begin{proof} (i) First we show that there is some $k\in\mathbb{N}_0$ for which $x_m=m^k$. Assume to the contrary that $x_m\not=m^k$ for all $k\in\mathbb{N}_0$. Then by Lemma~\ref{Lem 4.4}(2), we have that $x_m=0_m$. However, $\mathcal{L}$ is a principally generated $C$-lattice domain, and so $x_m$ is above an $\ell$-invertible element $y$ in $\mathcal{L}$. Thus $y_m=0_m=(y^2)_m$, so that $yb\leq y^2$ for some compact $b\not\leq m$. Since $y$ is $\ell$-invertible, this implies $b\leq y\leq m$, a contradiction.

It remains to show that for each $n\in\mathbb{N}_0$ with $x_m=m^n$ we have that $n={\rm v}_m(x)$. Let $n\in\mathbb{N}_0$ be such that $x_m=m^n$. Then $x\leq m^n$, and hence $n\leq {\rm v}_m(x)$. Suppose that $n<{\rm v}_m(x)$. Then $x_m\leq m^{n+1}\leq m^n=x_m$, and hence $m^n=m^{n+1}$. By Theorem~\ref{Thm 4.6}, there is a zero-dimensional $\ell$-invertible $\ell$-radical element $z$ with $z_m=m$. Thus $m^n=m^{n+1}$ implies $(z^n)_m=(z^{n+1})_m$. In this case, there is a compact $b\not\leq m$ such that $bz^n\leq z^{n+1}$. Since $z$ is $\ell$-invertible, this implies $b\leq z\leq m$, a contradiction.

(ii) By (i) and Lemma~\ref{Lem 2.7}(2) it follows that
\[
m^{{\rm v}_m(xy)}=(x_my_m)_m=(m^{{\rm v}_m(x)}m^{{\rm v}_m(y)})_m=m^{{\rm v}_m(x)+{\rm v}_m(y)}.
\]
We infer by (i) that ${\rm v}_m(xy)={\rm v}_m(x)+{\rm v}_m(y)$.
\end{proof}

\begin{lemma}\label{Lem 6.4} If $\mathcal{L}$ is a principally generated radical factorial $C$-lattice domain, then for each nonzero $x\in\mathcal{L}$, the function $\alpha_x:X_\mathcal{L}\rightarrow {\mathbb{N}_0}:m\mapsto {\rm v}_m(x)$ is compactly supported and upper semicontinuous. If $x$ is compact, then $\alpha_x$ is continuous.
\end{lemma}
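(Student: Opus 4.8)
The plan is to verify the three asserted properties of $\alpha_x$ in turn, working from the valuation maps ${\rm v}_m$ established in Lemma~\ref{Lem 6.3} and the local structure theory of Lemma~\ref{Lem 4.4}. First I would handle upper semicontinuity: by definition I must show that for each $k \in \mathbb{N}_0$ the set $\alpha_x^{-1}([k,\infty)) = \{m \in X_\mathcal{L} \mid {\rm v}_m(x) \geq k\} = \{m \mid x \leq m^k\}$ is closed in the inverse topology on $X_\mathcal{L}$. Since $x = \bigvee_\beta w_\beta$ with the $w_\beta$ compact, and $x \leq m^k$ iff $w_\beta \leq m^k$ for all $\beta$, it suffices to show each $\{m \mid w_\beta \leq m^k\}$ is closed; writing $\sqrt{w_\beta}$, which is compact and $\ell$-invertible by Theorem~\ref{Thm 4.6}, and using ${\rm v}_m(w_\beta)$ expressed via ${\rm v}_m(\sqrt{w_\beta})$ on the support, I reduce to showing that for a compact $\ell$-radical element $y$ the set $\{m \mid y \leq m^k\}$ is closed. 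This set equals $V(y^k)$ if $y^k$ is compact (which it is, being a product of compact elements), so it is closed by definition of the inverse topology.

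Next, compact support: I need the set $\{m \in X_\mathcal{L} \mid \alpha_x(m) \neq 0\} = \{m \mid x \leq m\} = V(x')$ where $x'$ is compact with $\sqrt{x'} = \sqrt{x}$ below the same maximal elements — more directly, ${\rm v}_m(x) \geq 1$ iff $x \leq m$, so the support of $\alpha_x$ is $\{m \mid x \leq m\}$. Picking any nonzero compact $w \leq x$ (which exists since $\mathcal{L}$ is a $C$-lattice and $x \neq 0$), we have $\{m \mid x \leq m\} \subseteq V(w)$, and $V(w)$ is a compact subset of $X_\mathcal{L}$ by Lemma~\ref{Pro 6-2} — wait, I must use the numbering in the paper: by Lemma~\ref{Pro 6.2}, $V(w)$ is compact. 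Since the support is a closed subset (by the upper semicontinuity just proven, with $k=1$) of the compact set $V(w)$, it is compact, so $\alpha_x$ is compactly supported.

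Finally, continuity when $x$ is compact: here I additionally need each set $\alpha_x^{-1}(\{k\}) = \alpha_x^{-1}([k,\infty)) \setminus \alpha_x^{-1}([k+1,\infty))$ to be open, equivalently $\alpha_x^{-1}([k,\infty))$ to be open as well as closed. With $x$ compact, $\sqrt{x}$ is compact and $\ell$-invertible by Theorem~\ref{Thm 4.6}, and for $m$ in the support $\alpha_x(m) = {\rm v}_m(x)$ is finite (Lemma~\ref{Lem 4.4}(2) rules out the $0_m$ case since $x \neq 0$ sits below an $\ell$-invertible element, as in the proof of Lemma~\ref{Lem 6.3}(i)); thus $\alpha_x$ takes only finitely many values. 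Then $\{m \mid {\rm v}_m(x) \geq k\} = V(x^k)$ with $x^k$ compact, which is clopen in $X_\mathcal{L}$ because $X_\mathcal{L}$ is zero-dimensional (Lemma~\ref{Pro 6.2}) and $V(x^k)$ is both compact-open and closed; more carefully, each $V(z)$ for $z$ compact is open by definition, and it is closed because its complement $U(x^k)$ is a union of basic closed sets... the cleanest route is: $V(x^k)$ is open by definition, and closed because $\alpha_x^{-1}([k,\infty))$ was shown closed above. Hence all level sets $\alpha_x^{-1}(\{k\})$ are clopen, so $\alpha_x$ is continuous.

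The main obstacle I anticipate is the bookkeeping in the upper-semicontinuity step when $x$ is not compact: I cannot directly say $\{m \mid x \leq m^k\} = V(x^k)$ since $x^k$ need not be compact, so I must genuinely pass through the representation $x = \bigvee_\beta w_\beta$ and argue that an arbitrary intersection of closed sets $\bigcap_\beta \{m \mid w_\beta \leq m^k\}$ is closed — which is automatic — reducing everything to the compact case. The subtlety is ensuring ${\rm v}_m(w_\beta) \geq k \iff w_\beta \leq m^k$ is handled correctly via Lemma~\ref{Lem 6.3}(i), and that $x \leq m^k$ is genuinely equivalent to $w_\beta \leq m^k$ for all $\beta$, which follows since $m^k$ is an upper bound of $\{w_\beta\}$ iff it dominates their join. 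Once this reduction is in place the rest is routine application of Lemma~\ref{Pro 6.2} and Theorem~\ref{Thm 4.6}.
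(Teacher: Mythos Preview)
Your argument contains a genuine error that recurs in both the upper-semicontinuity step and the continuity step: you identify the set $\{m \in X_\mathcal{L} \mid {\rm v}_m(x) \geq k\} = \{m \mid x \leq m^k\}$ with $V(x^k) = \{m \mid x^k \leq m\}$. These are different sets. Since each $m$ is $\ell$-prime, $x^k \leq m$ if and only if $x \leq m$, so $V(x^k) = V(x)$ for every $k \geq 1$; in particular $V(x^k)$ does not vary with $k$, whereas $\{m \mid x \leq m^k\}$ certainly does. For instance, if $y$ is a nonzero $\ell$-radical element and $k \geq 2$, then $\{m \mid y \leq m^k\}$ is empty (because $y_m = m \ne m^k$), while $V(y^k) = V(y)$ is not. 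The proposed reduction ``via $\sqrt{w_\beta}$'' does not repair this: the radical of $w_\beta$ does not determine ${\rm v}_m(w_\beta)$, and even granting that step, the set you arrive at is not $V(y^k)$. A related slip: you assert that $V(y^k)$ is ``closed by definition of the inverse topology,'' but the sets $V(z)$ for compact $z$ are the basic \emph{open} sets; their closedness is a consequence of Lemma~\ref{Pro 6.2} (compact subsets of a Hausdorff space are closed), not a matter of definition.

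The paper avoids this trap by treating the compact case first and using the ordered radical factorization $x = x_1 \cdots x_s$ with $x_1 \leq \cdots \leq x_s$ from Theorem~\ref{Thm 4.6}(4). Since each $x_i$ is $\ell$-radical one has ${\rm v}_m(x_i) \in \{0,1\}$, so ${\rm v}_m(x) = k$ exactly when $x_k \leq m$ and $x_{k+1} \not\leq m$; the paper then shows this level set equals $V((x_k : x_{k+1}))$ and proves $(x_k : x_{k+1})$ is compact, giving openness directly. Your reduction of the general case to the compact case via $x = \bigvee_\beta w_\beta$ and $\alpha_x^{-1}([k,\infty)) = \bigcap_\beta \alpha_{w_\beta}^{-1}([k,\infty))$ is correct and is exactly what the paper does for upper semicontinuity, but it only works once the compact case is established correctly. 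Your compact-support argument is fine and matches the paper's.
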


\begin{proof} We prove the second assertion first. Let $x$ be a nonzero compact element of $\mathcal{L}$. Since ${\mathbb{N}_0}$ is a discrete space, to show that $\alpha_x$ is continuous it suffices to prove that for each $k\in {\mathbb{N}_0}$, $\{m\in X_\mathcal{L}\mid {\rm v}_m(x)=k\}$ is an open subset of $X_\mathcal{L}$. Let $k\in {\mathbb{N}_0}$. Using Theorem~\ref{Thm 4.6}, write $x=x_1\cdots x_s$ for some $\ell$-radical elements $x_1\leq\cdots\leq x_s$.

We observe that the join of any two nonzero $\ell$-radical elements $x,y\in\mathcal{L}$ is $\ell$-radical. Note that if $n\in X_\mathcal{L}$ is such that $x\vee y\leq n$, then $n\geq (x\vee y)_n\geq x_n\vee y_n=n\vee n=n$, and hence $\{(x\vee y)_m\mid m\in X_\mathcal{L},x\vee y\leq m\}=\{m\in X_\mathcal{L}\mid x\vee y\leq m\}$. Since $\dim(\mathcal{L})\leq 1$ by Theorem~\ref{Thm 4.6}, we infer from Lemma~\ref{Lem 2.7} that
\begin{eqnarray*}
\sqrt{x\vee y}&=&\bigwedge\{m\in X_\mathcal{L}\mid x\vee y\leq m\}\\
\:&=&\bigwedge\{(x\vee y)_m\mid m\in X_\mathcal{L} \ {\rm and } \ x\vee y\leq m\} \:=\:x\vee y.
\end{eqnarray*}

Moreover, for any two $\ell$-radical elements $y$ and $z$ in $\mathcal{L}$ with $y\not\leq z$, we have that $$(z:y)=\bigwedge\{m\in X_\mathcal{L}\mid m\geq z, m\not\geq y\}=\bigwedge\left(V(z)\cap U(y)\right).$$
Set $x_{s+1}=1$. Using these observations and the assumption that the $x_i$'s form a chain, we see that
\begin{eqnarray*}
\{m\in X_\mathcal{L}\mid {\rm v}_m(x)=k\}&=&\{m\in X_\mathcal{L}\mid {\rm v}_m(x_1\cdots x_s)=k\}\\
\:&=&\{m\in X_\mathcal{L}\mid x_k\leq m,x_{k+1}\not\leq m\}\\
\:&=&\{m\in X_\mathcal{L}\mid (x_k:x_{k+1})\leq m\}\:=\:V((x_k:x_{k+1})).
\end{eqnarray*}

To prove this set is open in $X_\mathcal{L}$, it suffices to show that $(x_k:x_{k+1})$ is compact. Since $x$ is compact, we have by Lemma~\ref{Lem 4.4}(3) that $x$ is $\ell$-invertible, and hence by Lemma~\ref{Lem 2.5}(2) each $x_i$ is $\ell$-invertible. From this it follows that $(x_k:x_{k+1})$ is compact: If $(x_k:x_{k+1})\leq\bigvee_\alpha y_\alpha$, then $x_k=x_{k+1}(x_k:x_{k+1})\leq\bigvee_\alpha x_{k+1}y_\alpha$. The compactness of $x_k$ implies then that $x_k\leq x_{k+1}y_{\alpha_1}\vee\cdots\vee x_{k+1}y_{\alpha_t}$ for some $\alpha_1,\ldots,\alpha_t$. Using the fact that $x_{k+1}$ is cancellative, we obtain $$(x_k:x_{k+1})\leq (x_{k+1}y_{\alpha_1}\vee\cdots\vee x_{k+1}y_{\alpha_t}:x_{k+1})=y_{\alpha_1}\vee\cdots\vee y_{\alpha_t},$$ which proves that $(x_k:x_{k+1})$ is compact. This shows that $\alpha_x$ is continuous.

Next, suppose that $x$ is an element of $X_\mathcal{L}$ that is not necessarily compact. We claim that $\alpha_x$ is upper semicontinuous. To this end, let $k\in\mathbb{N}_0$. Let $A$ be the set of $\ell$-invertible elements in $\mathcal{L}$ below $x$. Since $\mathcal{L}$ is a principally generated $C$-lattice domain, we have that $x=\bigvee_{a\in A}a$. Therefore,
\begin{eqnarray*}
\alpha_x^{-1}([k,\infty))&=&\{m\in X_\mathcal{L}\mid {\rm v}_m(x)\geq k\}\:
\:=\:\:\{m\in X_\mathcal{L}\mid x\leq m^k\}\\
&=&\{m\in X_\mathcal{L}\mid a\leq m^k {\mbox{ for all }}a\in A\}\\
&=&\{m\in X_\mathcal{L}\mid {\rm v}_m(a)\geq k {\mbox{ for all }}a\in A\}\\
&=&\bigcap_{a\in A}\alpha_{a}^{-1}([k,\infty)).
\end{eqnarray*}

Since each $a$ is compact, $\alpha_{ a}$ is a continuous function by what we have previously established. Therefore, the last intersection is an intersection of closed sets. Hence $\alpha_x^{-1}([k,\infty))$ is closed, which proves that $\alpha_x$ is upper semi-continuous.

To see next that $\alpha_x$ is compactly supported, let $y$ be an $\ell$-invertible element in $\mathcal{L}$ with $y\leq x$. To prove that $x$ is compactly supported, it suffices to show that $\{m\in X_\mathcal{L}\mid y\leq m\}$ is compact in $X_\mathcal{L}$. This is the case by Lemma~\ref{Pro 6.2}, so $\alpha_x$ is compactly supported.
\end{proof}

\begin{theorem}\label{Thm 6.5} If $\mathcal{L}$ is a principally generated radical factorial $C$-lattice domain, then $\mathcal{L}$ and $U_b(X_\mathcal{L})$ are isomorphic as multiplicative lattices.
\end{theorem}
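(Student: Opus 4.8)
The plan is to build the isomorphism using the map $\Phi:\mathcal{L}\to U_b(X_\mathcal{L})$ that sends $0$ to the bottom element $b$ and sends each nonzero $x\in\mathcal{L}$ to the function $\alpha_x:m\mapsto {\rm v}_m(x)$ produced in Lemma~\ref{Lem 6.4}. Lemma~\ref{Lem 6.4} already guarantees that $\alpha_x\in U(X_\mathcal{L})$ for nonzero $x$, so $\Phi$ is well-defined, and it sends the top element $1$ of $\mathcal{L}$ (for which ${\rm v}_m(1)=0$ for all $m$) to the zero function, the top element of $U_b(X_\mathcal{L})$. The first task is to show $\Phi$ is a homomorphism of multiplicative lattices: it is multiplicative by Lemma~\ref{Lem 6.3}(ii) on nonzero elements, and on the zero element because $\mathcal{L}$ is a lattice domain and $b$ is $\ell$-prime in $U_b(X_\mathcal{L})$ (Lemma~\ref{Lem 5.2}); it is order-reversing-compatible, i.e. $x\leq y$ iff $\alpha_x\leq_d\alpha_y$, because by Lemma~\ref{Lem 2.7}(5) an inequality in $\mathcal{L}$ can be checked locally at each $m$, and at $m$ it amounts to comparing the powers of $m$, i.e. comparing ${\rm v}_m$-values. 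This order-equivalence immediately gives that $\Phi$ is injective (if $\alpha_x=\alpha_y$ then $x\leq y$ and $y\leq x$) and that $\Phi$ preserves arbitrary meets and joins.

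The substantive part is surjectivity. By Lemma~\ref{Lem 5.2}(5), every element of $U(X_\mathcal{L})$ is a sum $k_0\mathbf{1}_{C_0}+\sum_{i=1}^{n}(k_i-k_{i-1})\mathbf{1}_{C_i}$ with $C_0\supseteq\cdots\supseteq C_n$ compact in $X_\mathcal{L}$. Since $\Phi$ is multiplicative, it suffices to realize each characteristic function $\mathbf{1}_C$ of a compact subset $C\subseteq X_\mathcal{L}$ as $\alpha_y$ for some $\ell$-radical $y\in\mathcal{L}$. Here is where I would use the zero-dimensionality of $X_\mathcal{L}$ from Lemma~\ref{Pro 6.2}: the sets $V(z)$ with $z$ a nonzero compact (equivalently, by Theorem~\ref{Thm 4.6}, $\ell$-radical, or even $\ell$-invertible $\ell$-radical) element form a basis of compact open sets, and on a zero-dimensional Hausdorff space every compact set is an intersection of such basic clopen sets — in fact a compact open set. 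So I would first argue $C$ is clopen, write its complement as a union of basic opens $V(z_\alpha)$, extract a finite subcover by compactness of $X_\mathcal{L}\setminus C$ (which is closed in the compact space $V(y_0)$ for any $\ell$-invertible $\ell$-radical $y_0$ with $V(y_0)\supseteq X_\mathcal{L}$... more carefully, work inside a fixed compact $V(y_0)$ containing $C$), hence $C=V(y_0)\cap U(z_{\alpha_1})\cap\cdots\cap U(z_{\alpha_k})=V\big((y_0:z_{\alpha_1}\cdots z_{\alpha_k})\big)$ using the cancellativity computations from the proof of Lemma~\ref{Lem 6.4}. Then I would set $y=\sqrt{w}$ for the compact element $w$ cutting out $C$, and check $\alpha_y=\mathbf{1}_C$: by $\dim\mathcal{L}\leq 1$ and $y$ being $\ell$-radical and nonzero, ${\rm v}_m(y)=1$ when $y\leq m$ and $0$ otherwise, i.e. exactly $\mathbf{1}_{V(y)}=\mathbf{1}_C$.

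I expect the main obstacle to be the surjectivity step, specifically the clean identification of an arbitrary compact subset $C$ of $X_\mathcal{L}$ with some $V(w)$ for $w$ compact: one must use both halves of Lemma~\ref{Pro 6.2} (Hausdorff plus zero-dimensional) to see $C$ is compact open, then translate ``compact open'' back into a single compact element of $\mathcal{L}$ via the $V(\,\cdot\,)\leftrightarrow$ compact-element dictionary, carefully tracking that the colon ideal $(y_0:z_{\alpha_1}\cdots z_{\alpha_k})$ is again compact (which is where $\ell$-invertibility and the cancellation argument from Lemma~\ref{Lem 6.4} re-enter). The homomorphism and injectivity parts are essentially bookkeeping on top of Lemmas~\ref{Lem 6.3} and~\ref{Lem 6.4}; the decomposition Lemma~\ref{Lem 5.2}(5) reduces surjectivity to the characteristic-function case; so the whole argument hinges on the topology of $X_\mathcal{L}$ being exactly rich enough — zero-dimensional Hausdorff with a basis of compact opens — which is precisely what the preceding lemmas were arranged to deliver.
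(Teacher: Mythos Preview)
Your treatment of well-definedness, multiplicativity, the order equivalence $x\leq y\iff\alpha_x\leq_d\alpha_y$, and injectivity is correct and is exactly what the paper does. The reduction of surjectivity to characteristic functions via Lemma~\ref{Lem 5.2}(5) is also the paper's move.

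The gap is in your surjectivity argument for $\mathbf{1}_C$. You assert that in a zero-dimensional Hausdorff space every compact set is ``in fact a compact open set'', and then try to extract a \emph{finite} subcover of the complement. This is false in general: a singleton in the Cantor set is compact and closed but not open, and indeed $X_\mathcal{L}$ can be homeomorphic to such a space (see Remark~\ref{Rem 7.9}, where $X_\mathcal{L}$ arises as the maximal spectrum of a zero-dimensional ring). Consequently $V(y_0)\setminus C$ is open, not closed, in the compact set $V(y_0)$, so no finite subcover is available, and $C$ need not equal $V(w)$ for any \emph{compact} $w$. Your attempt to land on a compact colon element $(y_0:z_{\alpha_1}\cdots z_{\alpha_k})$ therefore cannot succeed in general.

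The paper repairs exactly this point by dropping the compactness requirement on the preimage. One still finds a nonzero $\ell$-radical $y$ with $C\subseteq V(y)$ (finite cover of the compact $C$ by basic opens, then take $y=\sqrt{\prod a}$). Since $C$ is closed, write $C=\bigcap_i U(y_i)$ as a possibly \emph{infinite} intersection with $y_i$ compact $\ell$-radical, so $C=\bigcap_i\bigl(V(y)\cap U(y_i)\bigr)=\bigcap_i V\bigl((y:y_i)\bigr)=V(z)$ where $z=\bigvee_i(y:y_i)$. The element $z$ is not compact, but it is $\ell$-radical because it lies above the zero-dimensional $\ell$-radical $y$; that alone forces ${\rm v}_m(z)\in\{0,1\}$ and hence $\alpha_z=\mathbf{1}_{V(z)}=\mathbf{1}_C$. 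So the fix is simply to allow an infinite join and observe that $\ell$-radicality, not compactness, is what makes $\alpha_z$ a characteristic function.
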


\begin{proof}
We claim that the mapping $\phi:\mathcal{L}\rightarrow U_b(X_\mathcal{L})$ defined by $\phi(0) = b$ and $\phi(x) = \alpha_x$ for all nonzero $x \in \mathcal{L}$   is an isomorphism of multiplicative lattices. By Lemma~\ref{Lem 6.4}, $\phi$ is well-defined. To see that $\phi$ is a homomorphism of monoids, first observe that if $x \in \mathcal{L}$, then it is clear that $\phi(0\cdot x) = \phi(0) + \phi(x)$.  Now let  $x,y$ be nonzero elements in $\mathcal{L}$. Then for each $m\in X_\mathcal{L}$, $\phi(xy)(m)=\alpha_{xy}(m)={\rm v}_m(xy)={\rm v}_m(x)+{\rm v}_m(y)=\alpha_{x}(m)+\alpha_{y}(m)=(\phi(x)+\phi(y))(m)$ by Lemma~\ref{Lem 6.3}(ii) and $\phi(1)(m)=\alpha_1(m)={\rm v}_m(1)=0$. Therefore, $\phi(1)=\mathbf{0}$ and $\phi(xy)=\phi(x)+\phi(y)$ for all  $x,y \in \mathcal{L}$, and thus $\phi$ is a homomorphism of monoids.

We show next that for all $x,y\in\mathcal{L}$, $x\leq y$ iff $\phi(x)\leq_d\phi(y)$. Let $x,y\in\mathcal{L}$ and $m\in X_\mathcal{L}$. Without restriction let $x,y\not=0$. If $x\leq y$, then $\phi(x)(m)={\rm v}_m(x)\geq {\rm v}_m(y)=\phi(y)(m)$, so that $\phi(x)(n)\geq\phi(y)(n)$ for all $n\in X_\mathcal{L}$, and hence $\phi(x)\leq_d\phi(y)$. Conversely, if $\phi(x)\leq_d\phi(y)$, then ${\rm v}_m(y)=\phi(y)(m)\leq\phi(x)(m)={\rm v}_m(x)$, so that $x_n=n^{{\rm v}_n(x)}\leq n^{{\rm v}_n(y)}=y_n$ for all $n\in X_\mathcal{L}$ by Lemma~\ref{Lem 6.3}(i), and hence $x\leq y$ by Lemma~\ref{Lem 2.7}(5).

It is an immediate consequence of the last statement that $\phi$ is injective.

Finally, to see that $\phi$ is onto, observe first that $\phi(0) = b$ and  every element of $U(X_\mathcal{L})$ is by Lemma~\ref{Lem 5.2}(5) a linear combination of characteristic functions $\mathbf{1}_{C}$, where $C$ is compact in $X_\mathcal{L}$. Therefore, we need only show that each such characteristic function is in the image of $\phi$. Let $C$ be a compact subset of $X_\mathcal{L}$. Without restriction we can assume that $0\not\in X_{\mathcal{L}}$. Since $C\subseteq\bigcup_{a\in\mathcal{L}^*\setminus\{0\}} V(a)$, the fact that $C$ is compact implies there is a finite set $A\subseteq\mathcal{L}^*\setminus\{0\}$ such that $C\subseteq\bigcup_{a\in A} V(a)$. Set $y=\sqrt{\prod_{a\in A} a}$. Then $y$ is a nonzero $\ell$-radical element of $\mathcal{L}$ such that $C\subseteq V(y)$.

Since $C$ is closed in $X_\mathcal{L}$, there is a collection $\{y_i\}$ of compact elements in $\mathcal{L}$ such that $C=\bigcap_{i} U(y_i)$. We can assume without restriction that each $y_i$ is $\ell$-radical, since $U(y_i)=U(\sqrt{y_i})$ and $\sqrt{y_i}$ is compact for each $i$ by Theorem 4.6. Since also $C\subseteq V(y)$, we have that $C=\bigcap_{i}\left(V(y)\cap U(y_i)\right)$. Now $V(y)\cap U(y_i)=V((y:y_i))$. Let $z=\bigvee_i (y:y_i)$. Then $C=V(z)$ and $z$ is an $\ell$-radical element in $\mathcal{L}$ since every element above the zero-dimensional $\ell$-radical element $y$ is $\ell$-radical. Thus $z_m=m$ for each $m\in\mbox{\rm Max}(\mathcal{L})$ above $z$. Since $\phi(z)(m)=\alpha_z(m)={\rm v}_m(z)$, it follows that for each $m\in X_\mathcal{L}$, we have that $m\in C$ if and only if $\phi(z)(m)=1$. Therefore, $\phi(z)=\mathbf{1}_C$, which proves that $\phi$ is onto.
\end{proof}

\begin{corollary}\label{Cor 6.6} Two principally generated radical factorial $C$-lattice domains $\mathcal{L}$ and $\mathcal{L}'$ are isomorphic if and only if $X_\mathcal{L}$ and $X_{\mathcal{L}'}$ are homeomorphic.
\end{corollary}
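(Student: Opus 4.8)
The plan is to derive Corollary~\ref{Cor 6.6} as a formal consequence of the isomorphism established in Theorem~\ref{Thm 6.5}. The statement has two directions. For the forward direction, suppose $\mathcal{L}$ and $\mathcal{L}'$ are isomorphic as multiplicative lattices via an isomorphism $\psi\colon\mathcal{L}\to\mathcal{L}'$. First I would observe that $\psi$ is in particular an order isomorphism preserving arbitrary joins and meets, hence preserves the compact elements (compactness is defined purely in terms of joins) and the maximal elements. Thus $\psi$ restricts to a bijection $\mbox{Max}(\mathcal{L})\to\mbox{Max}(\mathcal{L}')$, i.e.\ a bijection $X_\mathcal{L}\to X_{\mathcal{L}'}$. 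To see this bijection is a homeomorphism for the inverse topologies, note that the basic closed sets are the $U(x)=\{m\mid x\not\leq m\}$ with $x$ compact; since $\psi$ preserves compactness and order, $\psi(U(x))=U(\psi(x))$, so $\psi$ carries basic closed sets to basic closed sets in both directions, giving a homeomorphism $X_\mathcal{L}\cong X_{\mathcal{L}'}$.

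For the converse, suppose $X_\mathcal{L}$ and $X_{\mathcal{L}'}$ are homeomorphic via a homeomorphism $h\colon X_\mathcal{L}\to X_{\mathcal{L}'}$. The key point is that by Theorem~\ref{Thm 6.5} we have multiplicative lattice isomorphisms $\phi\colon\mathcal{L}\to U_b(X_\mathcal{L})$ and $\phi'\colon\mathcal{L}'\to U_b(X_{\mathcal{L}'})$, so it suffices to produce a multiplicative lattice isomorphism $U_b(X_\mathcal{L})\to U_b(X_{\mathcal{L}'})$. This is supplied by the homeomorphism: define $h_*\colon U_b(X_\mathcal{L})\to U_b(X_{\mathcal{L}'})$ by $h_*(b)=b$ and $h_*(f)=f\circ h^{-1}$ for $f\in U(X_\mathcal{L})$. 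Since $h^{-1}$ is continuous and proper (being a homeomorphism of Hausdorff spaces, it carries compact sets to compact sets and preimages of compact sets under $h^{-1}$, i.e.\ images under $h$, are compact), precomposition with $h^{-1}$ takes compactly supported upper semicontinuous functions to compactly supported upper semicontinuous functions; and it visibly respects pointwise addition, the order $\leq_d$, and arbitrary joins and meets (all of which are defined pointwise in Lemma~\ref{Lem 5.2}), with inverse given by precomposition with $h$. Hence $h_*$ is a multiplicative lattice isomorphism, and $\mathcal{L}\cong U_b(X_\mathcal{L})\cong U_b(X_{\mathcal{L}'})\cong\mathcal{L}'$.

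I would write the proof compactly, as both directions are short once Theorem~\ref{Thm 6.5} is in hand: something like ``If $\mathcal{L}\cong\mathcal{L}'$, then since the compact elements and the maximal elements of a $C$-lattice are defined in terms of the lattice operations, an isomorphism $\mathcal{L}\to\mathcal{L}'$ induces a bijection $X_\mathcal{L}\to X_{\mathcal{L}'}$ carrying each basic closed set $U(x)$ to $U(\psi(x))$, hence a homeomorphism for the inverse topologies. Conversely, a homeomorphism $h\colon X_\mathcal{L}\to X_{\mathcal{L}'}$ induces an isomorphism $U_b(X_\mathcal{L})\to U_b(X_{\mathcal{L}'})$ of multiplicative lattices via $f\mapsto f\circ h^{-1}$ (and $b\mapsto b$), because the lattice and monoid structure on $U_b$ is pointwise and $h$ carries the compact subsets of $X_\mathcal{L}$ bijectively onto those of $X_{\mathcal{L}'}$. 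Combining this with the isomorphisms $\mathcal{L}\cong U_b(X_\mathcal{L})$ and $\mathcal{L}'\cong U_b(X_{\mathcal{L}'})$ of Theorem~\ref{Thm 6.5} gives $\mathcal{L}\cong\mathcal{L}'$.''

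The main obstacle, such as it is, is bookkeeping rather than mathematics: one must check that the notions entering the statement---compactness of an element, maximality of an element, and the inverse topology---are genuinely intrinsic to the multiplicative lattice, so that any isomorphism transports them; and dually that the data defining $U_b(X)$ (pointwise addition, $\leq_d$, joins/meets, the class of compactly supported upper semicontinuous functions) is transported by any homeomorphism of the underlying space. Both checks are routine given the pointwise descriptions in Lemma~\ref{Lem 5.2} and the purely lattice-theoretic definitions of compact and maximal elements, so no serious difficulty is expected; the corollary is essentially a formal repackaging of Theorem~\ref{Thm 6.5}.
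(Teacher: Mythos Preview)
Your proposal is correct and follows essentially the same approach as the paper: the paper proves the converse direction by noting that a homeomorphism $X_\mathcal{L}\cong X_{\mathcal{L}'}$ yields $U_b(X_\mathcal{L})\cong U_b(X_{\mathcal{L}'})$ and then invokes Theorem~\ref{Thm 6.5}, while dismissing the forward direction as ``straightforward.'' Your write-up simply makes explicit the routine checks (preservation of compact and maximal elements, transport of basic closed sets, the pointwise isomorphism $f\mapsto f\circ h^{-1}$) that the paper leaves implicit.
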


\begin{proof} If $X_\mathcal{L}$ and $X_{\mathcal{L}'}$ are homeomorphic, then it follows that $U_b(X_\mathcal{L})\cong U_b(X_{\mathcal{L}'})$. Theorem~\ref{Thm 6.5} then implies $\mathcal{L}\cong\mathcal{L}'$. The converse is straightforward.
\end{proof}

\begin{corollary}\label{Cor 6.7} Let $\mathcal{L}$ be a principally generated radical factorial $C$-lattice domain. Then each nonzero $x\in\mathcal{L}$ can be written uniquely as a product of proper $\ell$-radical elements $x_1\leq\cdots\leq x_n$.
\end{corollary}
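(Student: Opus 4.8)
The plan is to prove uniqueness by transporting the problem through the isomorphism $\phi\colon\mathcal L\to U_b(X_\mathcal L)$ of Theorem~\ref{Thm 6.5}. First I would note that existence of such a decomposition is already guaranteed: by Theorem~\ref{Thm 4.6}(4), $x=x_1\cdots x_n$ for $\ell$-radical $x_1\leq\cdots\leq x_n$, and by deleting any factor equal to $1$ (which does not affect the product) we may assume each $x_i$ is proper. So the content is uniqueness, and the natural strategy is to show that under $\phi$, proper $\ell$-radical elements correspond to characteristic functions $\mathbf 1_C$ with $C$ a nonempty compact subset of $X_\mathcal L$, and that a product of $\ell$-radical elements forming a chain corresponds to an expression $f=\mathbf 1_{C_1}+\cdots+\mathbf 1_{C_n}$ with $C_1\supseteq\cdots\supseteq C_n$ nonempty compact. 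Then uniqueness reduces to the (transparent) fact that such a ``layered'' representation of a compactly supported upper semicontinuous function is unique, since the $C_i$ are recovered from $f$ by $C_i=f^{-1}([i,\infty))=\{m\in X_\mathcal L\mid f(m)\geq i\}$.

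The key steps, in order, would be: (i) show that a proper $\ell$-radical element $y\in\mathcal L$ (so $y=\sqrt y$, $y\neq 1$, hence $y\neq 0$ since $\mathcal L$ is a domain, and $y$ nonzero compact-or-not is irrelevant) satisfies $\phi(y)=\mathbf 1_{C}$ where $C=V(y)=\{m\in X_\mathcal L\mid y\leq m\}$ is nonempty; this is essentially the computation at the end of the proof of Theorem~\ref{Thm 6.5}, using that $\dim\mathcal L\leq 1$ (Theorem~\ref{Thm 4.6}) forces $y_m=m$ for every $m\geq y$, so ${\rm v}_m(y)=1$ for $m\in C$ and ${\rm v}_m(y)=0$ otherwise, and $C\neq\emptyset$ because $y\neq 1$; (ii) conversely, show every nonempty compact $C\subseteq X_\mathcal L$ arises this way (again from Theorem~\ref{Thm 6.5}), so proper $\ell$-radical elements are exactly $\phi^{-1}(\{\mathbf 1_C: C\ne\emptyset\text{ compact}\})$; (iii) observe that if $x_1\leq\cdots\leq x_n$ are proper $\ell$-radical with $\phi(x_i)=\mathbf 1_{C_i}$, then $x_i\leq x_j$ forces $\mathbf 1_{C_i}\leq_d\mathbf 1_{C_j}$, i.e.\ $C_i\supseteq C_j$ for $i\leq j$ (using that $\phi$ is an order isomorphism), and $\phi(x_1\cdots x_n)=\mathbf 1_{C_1}+\cdots+\mathbf 1_{C_n}$; (iv) conclude: if $x=x_1\cdots x_n=x_1'\cdots x_m'$ are two such decompositions, apply $\phi$ to get $\sum_{i}\mathbf 1_{C_i}=\sum_j\mathbf 1_{C_j'}$ as functions on $X_\mathcal L$, with both sides descending chains of nonempty compacts; evaluating preimages $\{m: f(m)\geq k\}$ recovers $C_k$ on the left and $C_k'$ on the right (and $n=m$ from $f^{-1}([k,\infty))=\emptyset$ for $k$ large), so $C_k=C_k'$ for all $k$, whence $x_k=x_k'$ by injectivity of $\phi$.

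Alternatively, and perhaps more cleanly, one can avoid even naming $U_b(X_\mathcal L)$ and argue intrinsically: for a proper $\ell$-radical $y$ and any $m\geq y$ one has $y_m=m$ (by $\dim\mathcal L\le 1$), so in a decomposition $x=x_1\cdots x_n$ with $x_1\leq\cdots\leq x_n$ proper $\ell$-radical, localizing at $m\in\mathrm{Max}(\mathcal L)$ gives $x_m=m^{k(m)}$ where $k(m)=\#\{i: x_i\leq m\}$; hence $x_i\leq m$ iff $i\leq {\rm v}_m(x)$, which by Lemma~\ref{Lem 6.3}(i) shows $V(x_i)=\{m: {\rm v}_m(x)\geq i\}$ is determined by $x$ alone. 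Since $\dim\mathcal L\le 1$ and each $x_i$ is $\ell$-radical, $x_i=\bigwedge\{m: x_i\leq m\}=\bigwedge V(x_i)$ is determined by $V(x_i)$, so the $x_i$ are uniquely determined by $x$; and $n=\max\{{\rm v}_m(x): m\in X_\mathcal L\}$ (finite since each $\alpha_x$ is compactly supported hence bounded, by Lemma~\ref{Lem 6.4}), so $n$ is determined too.

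The main obstacle I anticipate is the bookkeeping needed to pin down that the chain $x_1\le\cdots\le x_n$ is genuinely recovered from $x$ rather than merely its product being forced --- concretely, verifying that $x_i=\bigwedge V(x_i)$ for an $\ell$-radical element (i.e.\ that $\ell$-radical elements in a one-dimensional principally generated radical factorial $C$-lattice domain are intersections of the maximal elements above them, which uses Lemma~\ref{Lem 2.7}(8) together with $\dim\mathcal L\le 1$), and that the multiset $\{V(x_1),\dots,V(x_n)\}$ forms a chain under $\supseteq$ whose individual members are read off as the level sets $\{m:{\rm v}_m(x)\ge i\}$. Handling the degenerate normalizations (factors equal to $0$ cannot occur since $\mathcal L$ is a domain and $x\ne 0$; factors equal to $1$ are excluded by ``proper''; $V(x_i)\ne\emptyset$ because $x_i\ne 1$) is routine once these points are set up. I expect the cleanest write-up to lean on Theorem~\ref{Thm 6.5} and the explicit form of $U(X)$-elements in Lemma~\ref{Lem 5.2}(5), so that uniqueness in $\mathcal L$ is literally uniqueness of the layer-cake decomposition of an upper semicontinuous $\mathbb N_0$-valued function.
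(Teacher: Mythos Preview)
Your proposal is correct, and your primary route (transport through $\phi$ of Theorem~\ref{Thm 6.5}) is the same one the paper takes. The paper's execution is slightly more economical: rather than first establishing the dictionary ``proper $\ell$-radical $\leftrightarrow$ $\mathbf 1_C$ with $C$ nonempty compact'' and then invoking layer-cake uniqueness, it simply observes $x_1=\sqrt{x}=y_1$ intrinsically, applies $\phi$ to get $\sum_i\alpha_{x_i}=\sum_j\alpha_{y_j}$, cancels the common first term in the (cancellative) additive monoid $U(X_\mathcal L)$, uses injectivity of $\phi$ to conclude $x_2\cdots x_n=y_2\cdots y_t$, and recurses. Your level-set argument and the paper's peel-off-$\sqrt{x}$ argument are really the same induction viewed from opposite ends. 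Your alternative intrinsic proof via ${\rm v}_m$ and $V(x_i)=\{m:{\rm v}_m(x)\geq i\}$ is a genuinely different organization that bypasses Theorem~\ref{Thm 6.5}; it works, and it has the merit of making the determinacy of the factors visible directly from the local data, at the cost of re-proving by hand some of what the isomorphism $\phi$ packages for free.
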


\begin{proof} First observe that by Theorem~\ref{Thm 4.6}, each nonzero element of $\mathcal{L}$ has such a representation.
Let $x\ne 0$ be an element of $\mathcal{L}$. Suppose that $x=x_1\cdots x_n=y_1\cdots y_t$, where $x_1\leq\cdots\leq x_n$ and $y_1\leq\cdots\leq y_t$ are proper $\ell$-radical elements. Necessarily $x_1=\sqrt{x}=y_1$, so that since by Theorem~\ref{Thm 6.5}, $\sum_{i=1}^n\alpha_{x_i}=\alpha_x=\sum_{j=1}^t\alpha_{y_j}$, it follows that $\sum_{i=2}^n\alpha_{x_i}= \sum_{j=2}^t\alpha_{y_j}$. Since the mapping $\phi$ in Theorem~\ref{Thm 6.5} is injective, we have that $x_2\cdots x_n=y_2\cdots y_t$. Repeating the argument we obtain that $n=t$ and $x_i=y_i$ for all $i=1,2,\ldots,n$.
\end{proof}

\section{Applications to commutative rings}

We now interpret the results of the last sections in the context of commutative rings by viewing the set consisting of the regular ideals of a ring and the zero ideal of the ring as a principally generated lattice domain. By doing so, we obtain in Theorem~\ref{Thm 7.4} a characterization of a class of rings whose ideals are a product of radical ideals, followed by similar characterizations for domains in Corollary~\ref{Cor 7.7}. Because it comes at no extra expense of effort, we work more generally in the first two lemmas, where the focus is on a situation in which the role that the total quotient ring plays for regular ideals is replaced with a ring extension. The notion of regularity is replaced with a relativized notion that is flexible enough to cover both subclasses of regular ideals as well as ideals that need not contain a nonzerodivisor.

We recall several relevant definitions from
\cite{KZ}. If $R \subseteq T$ is an extension of commutative rings, an ideal $I$ of $R$ is {\it $T$-regular} if $IT = T$. For example, if $T$ is the total quotient ring of $R$, then an ideal is $T$-regular if and only if it is regular in the usual sense of containing a nonzerodivisor, while if $T$ is the complete ring of quotients of $R$, then an ideal is $T$-regular if and only if no nonzero element annihilates it.
An ideal $I$ of $R$ is {\it $T$-invertible} if there is an $R$-submodule $J$ of $T$ such that $IJ = R$. The extension $R \subseteq T$ is {\it tight} if for each $t \in T$ there is a $T$-invertible ideal $I$ with $tI\subseteq R$. Any ring of quotients is tight (see~\cite[p.~39]{KZ}).

\begin{lemma}\label{Lem 7.1} Let $R\subseteq T$ be an extension of commutative rings, and let $\mathcal{L}$ be the partially ordered set (ordered by inclusion) consisting of the $T$-regular ideals of $R$ and the zero ideal.
\begin{enumerate}
\item $\mathcal{L}$ is a $C$-lattice domain having the same residual $(I:J)$ as the lattice of all ideals of $R$.
\item The compact elements of $\mathcal{L}$ are the finitely generated $T$-regular ideals of $R$ together with the zero ideal.
\item If $R\subseteq T$ is a tight extension, then the nonzero $\ell$-principal elements of $\mathcal{L}$ are the $T$-invertible ideals of $R$ and are $\ell$-invertible elements of $\mathcal{L}$.
\end{enumerate}
\end{lemma}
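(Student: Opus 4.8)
The plan is to view $\mathcal{L}$ as a subset of the full ideal lattice $\mathcal{I}(R)$ of $R$ that is closed under arbitrary joins, and to transfer structure between the two: on $\mathcal{L}$ the joins and the residuals agree with those of $\mathcal{I}(R)$, and a meet computed in $\mathcal{L}$ agrees with ideal intersection whenever that intersection remains $T$-regular.

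For (1), I would first record the closure properties of $T$-regularity: an ideal sum $\sum_\alpha I_\alpha$ is $T$-regular as soon as one summand is (since $(\sum_\alpha I_\alpha)T\supseteq I_{\alpha_0}T=T$), a product $IJ$ of $T$-regular ideals is $T$-regular ($(IJ)T=I(JT)=IT=T$), and $R$ is $T$-regular. Hence $\mathcal{L}$ is closed under arbitrary ideal sums and products, has top $R$ and bottom $0$, and so is a complete lattice with joins equal to ideal sums, inheriting the distributive law and the identity $R$ from $\mathcal{I}(R)$; it is a domain because a product of nonzero (hence $T$-regular) elements of $\mathcal{L}$ is $T$-regular, hence nonzero. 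For the residual, the crucial point is that $(0:_R J)=0$ for $T$-regular $J$ (from $rT=(rJ)T=0$ when $rJ=0$, so $r=r\cdot 1=0$); together with $I\subseteq(I:_R J)$ this shows $(I:_R J)\in\mathcal{L}$ for all $I,J\in\mathcal{L}$, and then $(I:_R J)$ is visibly the largest $A\in\mathcal{L}$ with $AJ\leq I$, i.e. the residual in $\mathcal{L}$. Finally, for the $C$-lattice property, a $T$-regular ideal $I$ contains a finitely generated $T$-regular ideal $I_0$ (take finitely many products $a_it_i$ in $IT$ summing to $1$), so $I=\bigvee\{(a)+I_0:a\in I\}$ realizes $I$ as a join of finitely generated $T$-regular ideals; such ideals are compact in $\mathcal{L}$ (a finite generating set lies in a finite subjoin) and are closed under products, so the criterion in Definition~\ref{Def 2.2} applies. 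Statement (2) comes out of the same computation: finitely generated $T$-regular ideals and $0$ are compact, and a nonzero compact $I$, being $\bigvee\{(a)+I_0:a\in I\}$, equals a finite subjoin, hence is finitely generated and $T$-regular.

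For (3), assume $R\subseteq T$ is tight. Take a $T$-invertible ideal $I$, say $IJ=R$ with $J$ an $R$-submodule of $T$. Then $I\neq 0$, $I$ is $T$-regular (as $T=I(JT)\subseteq IT$), and $I$ is cancellative in $\mathcal{L}$ (multiply $IY=IZ$ by $J$). I would check directly that $I$ is weak meet principal in $\mathcal{I}(R)$: one inclusion of $y\cap I=(y:_R I)I$ is clear, and for the other, $(y\cap I)J\subseteq IJ=R$ together with $(y\cap I)JI=(y\cap I)R=y\cap I\subseteq y$ gives $(y\cap I)J\subseteq(y:_R I)$, so $y\cap I=(y\cap I)JI\subseteq(y:_R I)I$. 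Since the ideal lattice $\mathcal{I}(R)$ is modular, Lemma~\ref{Lem 2.10}(3) makes $I$ $\ell$-principal, in particular join principal, in $\mathcal{I}(R)$; as joins and residuals of $\mathcal{L}$-elements are the same in $\mathcal{L}$ and in $\mathcal{I}(R)$, $I$ is join principal in $\mathcal{L}$, while $I$ is weak meet principal in $\mathcal{L}$ because for $y\in\mathcal{L}$ the meet $I\wedge_{\mathcal{L}}y$ is the intersection $I\cap y$ (it contains the $T$-regular ideal $Iy$, or is $0$) and the residual is unchanged. Lemma~\ref{Lem 2.10}(2) then gives $I$ meet principal in $\mathcal{L}$, so $I$ is $\ell$-principal, and hence $\ell$-invertible, in the domain $\mathcal{L}$.

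The converse is the step I expect to be the main obstacle, since it is here that tightness is genuinely used. Let $I\in\mathcal{L}$ be nonzero and $\ell$-principal, hence cancellative and weak meet principal in $\mathcal{L}$. As $IT=T$, write $1=\sum_{i=1}^n a_it_i$ with $a_i\in I$ and $t_i\in T$, choose by tightness $T$-invertible ideals $B_i$ with $t_iB_i\subseteq R$, and put $B=B_1\cdots B_n$, which is $T$-invertible with $t_iB\subseteq R$ for every $i$; then for $b\in B$ we get $b=\sum_i a_i(t_ib)\in\sum_i a_iR\subseteq I$, so $B\subseteq I$. Applying weak meet principality of $I$ to $B\leq I$ gives $B=I\wedge_{\mathcal{L}}B=(B:_R I)I$, and multiplying by the inverse $(R:_T B)$ of $B$ yields $R=B(R:_T B)=I\cdot\bigl((B:_R I)(R:_T B)\bigr)$; since $(B:_R I)(R:_T B)$ is an $R$-submodule of $T$, this exhibits $I$ as $T$-invertible. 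The remaining $\ell$-invertibility assertion is then immediate from the remark following Definition~\ref{Def 2.4}.
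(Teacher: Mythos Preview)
Your proposal is correct and follows essentially the same approach as the paper. The paper is more terse: it cites \cite{AP} for the multiplicative lattice structure and the agreement of residuals in (1), and calls the forward direction of (3) ``routine''; you supply these details explicitly, in particular invoking the modularity of $\mathcal{I}(R)$ together with Lemma~\ref{Lem 2.10} to pass from weak meet principal and cancellative to $\ell$-principal, and giving an explicit construction (via $B=B_1\cdots B_n$) of the $T$-invertible ideal inside $I$ that the paper merely asserts to exist by tightness before applying weak meet principality in exactly the same way.
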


\begin{proof} (1) and (2). Since the intersection of any two $T$-regular ideals $I$ and $J$ is $T$-regular (as it contains $IJ$), and the sum of any two $T$-regular ideals is $T$-regular, we may view $\mathcal{L}$ as a sublattice of the complete lattice of all ideals of $R$. It is clear that the join of an arbitrary subset of $\mathcal{L}$ is again in $\mathcal{L}$. If $\mathcal{F}$ is a subset of $\mathcal{L}$ such that $\bigcap_{I\in\mathcal{F}}I$ is not $T$-regular, then we set $\bigwedge\mathcal{F}=0$. Otherwise, we set $\bigwedge\mathcal{F}=\bigcap_{I\in\mathcal{F}}I$. With this definition of arbitrary meets, $\mathcal{L}$ is a multiplicative lattice (see \cite[pp.~409--410]{AP}). Moreover, as discussed in \cite[p.~410]{AP}, the residuation $(I:J)$ is the same whether defined relative to $\mathcal{L}$ or the lattice of all ideals of $R$. It follows that $\mathcal{L}$ is a multiplicative lattice. Since the nonzero elements in $\mathcal{L}$ are $T$-regular, $\mathcal{L}$ is a lattice domain.

To see next that $\mathcal{L}$ is a $C$-lattice, we first verify (2).
Let $I$ be a compact element in $\mathcal{L}$. Since $IT=T$, $I$ is the sum of the finitely generated $T$-regular ideals of $R$ contained in $I$. Compactness of $I$ now implies that $I$ is finitely generated. The converse, that a finitely generated $T$-regular ideal is compact in $\mathcal{L}$, is routine.
Since every $T$-regular ideal of $R$ is the sum of the finitely generated $T$-regular ideals contained in it, this proves that $\mathcal{L}$ is a $C$-lattice.

(3) Let $I$ be a nonzero $\ell$-principal element in $\mathcal{L}$. Since $I$ is a $T$-regular ideal and $R\subseteq T$ is tight, there is a $T$-invertible ideal $A$ contained in $I$. Since $I$ is weak meet principal, we have that $IJ=A$ for some $T$-regular ideal $J$ of $R$. There is some $R$-submodule $D$ of $T$ for which $AD=R$. We infer that $I(JD)=R$, proving that $I$ is $T$-invertible. This also implies $I$ is cancellative since if $IB\subseteq IC$ for ideals $B,C$ of $R$, then $(JD)IB\subseteq (JD)IC$, so that $B\subseteq C$. Conversely, if $I$ is a $T$-invertible ideal of $R$, it is routine to see that $I$ is $\ell$-principal in $\mathcal{L}$.
\end{proof}

In the next lemma we work under the assumption that every $T$-regular ideal of $R$ is a sum of $T$-invertible ideals. This can be viewed as a generalization of the Marot property that requires of a ring that every regular ideal is generated by regular elements. However, the former assumption is quite a bit broader than the Marot property, since for example any Pr\"ufer ring also satisfies it. We point this out again in Remark~\ref{Rem 7.6}.

Lemma~\ref{Lem 7.1} situates the lattice of $T$-regular ideals in the context of multiplicative lattices. With the lemma, we may apply Theorem~\ref{Thm 4.6} to obtain a characterization of the radical factorization property for $T$-regular ideals. If $R\subseteq T$ is an extension of rings, we denote by $\mbox{\rm Max}^{-1}_T(R)$ the set of maximal $T$-regular ideals with the {\it inverse topology}, that is, the topology having as a basis of open sets the sets of the form $V(I):=\{M\in\mbox{\rm Max}^{-1}_T(R)\mid I\subseteq M\}$, where $I$ is $T$-regular and finitely generated. The extension $R \subseteq T$ is a {\it Pr\"ufer extension} if it is tight and every finitely generated $T$-regular ideal is $T$-invertible.

\begin{lemma}\label{Lem 7.2} The following are equivalent for a tight extension $R\subseteq T$ such that every $T$-regular ideal of $R$ is a sum of $T$-invertible ideals.
\begin{enumerate}
\item Every $T$-regular ideal is a product of radical ideals.
\item Each $T$-regular ideal is a product of (unique proper) radical ideals $J_1\subseteq\cdots\subseteq J_n$.
\item Every $T$-regular prime ideal is maximal and each $T$-invertible ideal is a product of radical ideals.
\item Each $T$-regular prime ideal is maximal and contains a $T$-invertible radical ideal.
\item The radical of each finitely generated $T$-regular ideal is $T$-invertible.
\item The multiplicative lattice consisting of the $T$-regular ideals and the zero ideal is isomorphic to $U_b(\mbox{\rm Max}_T^{-1}(R))$.
\item $R\subseteq T$ is a Pr\"ufer extension for which the radical of each finitely generated $T$-regular ideal is finitely generated.
\end{enumerate}
\end{lemma}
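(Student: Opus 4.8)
The plan is to reduce Lemma~\ref{Lem 7.2} to Theorem~\ref{Thm 4.6} applied to the $C$-lattice domain $\mathcal{L}$ of $T$-regular ideals (together with the zero ideal) furnished by Lemma~\ref{Lem 7.1}, and to Theorem~\ref{Thm 6.5} for the representation statement (6). First I would record the dictionary between lattice-theoretic and ring-theoretic notions for this particular $\mathcal{L}$: by Lemma~\ref{Lem 7.1}(1)--(2) the compact elements are the finitely generated $T$-regular ideals (plus zero) and residuals agree with those in the full ideal lattice, so $\ell$-prime elements of $\mathcal{L}$ are exactly the $T$-regular prime ideals; by Lemma~\ref{Lem 7.1}(3), since the extension is tight, the nonzero $\ell$-principal elements are precisely the $T$-invertible ideals, and these are automatically $\ell$-invertible. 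The hypothesis that every $T$-regular ideal is a sum of $T$-invertible ideals says exactly that every element of $\mathcal{L}$ is a join of $\ell$-invertible elements, hence in particular that $\mathcal{L}$ is principally generated. Finally, $\sqrt{I}$ computed in $\mathcal{L}$ agrees with the usual radical of $I$ (for nonzero $I$) by Lemma~\ref{Lem 2.7}(8) together with the identification of $\ell$-primes above, and the $\ell$-radical elements of $\mathcal{L}$ are the radical $T$-regular ideals (and zero). With this dictionary the equivalence of (1)--(5) is a direct translation of the corresponding equivalences (1),(4),(3),(2) respectively among the numbered conditions of Theorem~\ref{Thm 4.6}; I should be slightly careful that ``$\dim\mathcal{L}\le 1$ with each maximal $\ell$-prime above an $\ell$-invertible $\ell$-radical element'' unravels to ``every $T$-regular prime is maximal and contains a $T$-invertible radical ideal,'' and that condition (2) here corresponds to condition (4) of Theorem~\ref{Thm 4.6} (with uniqueness supplied by Corollary~\ref{Cor 6.7}).

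Next I would treat the equivalence with (6). Given (1)--(5), $\mathcal{L}$ is a principally generated radical factorial $C$-lattice domain, so Theorem~\ref{Thm 6.5} gives $\mathcal{L}\cong U_b(X_\mathcal{L})$ as multiplicative lattices, where $X_\mathcal{L}=\mbox{\rm Max}(\mathcal{L})$ with the inverse topology. It remains to identify $X_\mathcal{L}$ with $\mbox{\rm Max}^{-1}_T(R)$ as topological spaces: the maximal elements of $\mathcal{L}$ are exactly the maximal $T$-regular ideals, and the basic open sets $V(x)$ for $x$ compact in $\mathcal{L}$ are exactly the sets $V(I)$ for $I$ finitely generated $T$-regular, matching the definitions. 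Conversely, if the lattice of $T$-regular ideals is isomorphic to $U_b(\mbox{\rm Max}_T^{-1}(R))$, then by Theorem~\ref{Thm 5.3} it is radical factorial, which is (1); so (6)$\Rightarrow$(1) is immediate.

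For (7), the plan is to run through the equivalences in the multiplicative-lattice language: a Pr\"ufer extension is exactly the condition that every finitely generated $T$-regular ideal is $T$-invertible, i.e., every compact element of $\mathcal{L}$ is $\ell$-invertible; adding ``the radical of each finitely generated $T$-regular ideal is finitely generated'' says the $\ell$-radical of each compact element is compact. That is precisely condition (6) of Theorem~\ref{Thm 4.6}, which is equivalent to $\mathcal{L}$ being radical factorial, hence to (1) here. One subtlety: in a $C$-lattice domain that is principally generated, condition (6) of Theorem~\ref{Thm 4.6} forces $\dim\mathcal{L}\le 1$ and $\ell$-invertibility of nonzero compacts via Lemma~\ref{Lem 4.4}; but note the present lemma does not a priori assume $R\subseteq T$ is Pr\"ufer, so in the direction (7)$\Rightarrow$(1) I first invoke (7) to get that every compact is $\ell$-invertible (so $\mathcal{L}$ is a Pr\"ufer lattice, in particular principally generated) before applying the theorem, and in the direction (1)$\Rightarrow$(7) I use Theorem~\ref{Thm 4.6}(6) (equivalently Lemma~\ref{Lem 4.4}(3) together with the compactness of radicals of nonzero compact elements) to recover the Pr\"ufer extension property.

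The main obstacle I anticipate is bookkeeping rather than conceptual: carefully matching the several numbered conditions across Lemma~\ref{Lem 7.2}, Theorem~\ref{Thm 4.6}, and Theorem~\ref{Thm 6.5}, handling the zero ideal as a degenerate case uniformly (it is the bottom element of $\mathcal{L}$ and corresponds to $b\in U_b(X_\mathcal{L})$, and an empty product of radical ideals convention or the trivial factorization covers it), and verifying that ``tight'' is exactly what is needed for the $\ell$-principal/$T$-invertible identification in Lemma~\ref{Lem 7.1}(3) so that the theorem's hypotheses genuinely apply. Once the dictionary is fixed, each implication is a one- or two-line translation, and I would present the proof as: establish the dictionary, deduce (1)$\Leftrightarrow$(2)$\Leftrightarrow$(3)$\Leftrightarrow$(4)$\Leftrightarrow$(5) from Theorem~\ref{Thm 4.6} and Corollary~\ref{Cor 6.7}, then (1)$\Leftrightarrow$(6) from Theorem~\ref{Thm 6.5} and Theorem~\ref{Thm 5.3}, and finally (1)$\Leftrightarrow$(7) from Theorem~\ref{Thm 4.6}(6) via Lemma~\ref{Lem 4.4}.
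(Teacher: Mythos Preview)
Your proposal is correct and follows essentially the same approach as the paper: the paper's proof simply notes that Lemma~\ref{Lem 7.1} makes $\mathcal{L}$ a principally generated $C$-lattice domain and then invokes Theorems~\ref{Thm 4.6} and~\ref{Thm 6.5} and Corollary~\ref{Cor 6.7}, which is exactly your strategy of setting up the dictionary and translating each condition. Your write-up is more explicit about the bookkeeping (and correctly brings in Theorem~\ref{Thm 5.3} for the reverse implication in (6)), but there is no substantive difference in method.
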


\begin{proof} By Lemma~\ref{Lem 7.1}, the lattice $\mathcal{L}$ consisting of the $T$-regular ideals and the zero ideal is a $C$-lattice domain. The assumption that every $T$-regular ideal is a sum of $T$-invertible ideals implies that $\mathcal{L}$ is principally generated. Thus statements (1)--(7) follow from Lemma~\ref{Lem 7.1}, Theorems~\ref{Thm 4.6} and~\ref{Thm 6.5} and Corollary~\ref{Cor 6.7}.
\end{proof}

\begin{remark}\label{Rem 7.3} If $R \subseteq T$ is a Pr\"ufer extension, then it is clear that every $T$-regular ideal is a sum of $T$-invertible ideals. Thus statements (1)--(7) of Theorem~\ref{Lem 7.2} are equivalent under the lone hypothesis that $R \subseteq T$ is a Pr\"ufer extension.
\end{remark}

Specializing to the case where $T$ is the total quotient ring of $R$, we obtain the main theorem of this section, which generalizes to a larger class of rings some known characterizations of Marot SP-rings and adds a new one. Specifically, under the more restrictive assumption that $R$ is a Marot $N$-ring, the equivalence of (1)--(6) is proved by Ahmed and Dumitrescu in \cite[Theorem 2.12]{AD}. (A Marot ring is an $N$-ring if for each regular maximal ideal $M$ of $R$, $R_M$ is a discrete rank one Manis valuation ring.) In the theorem, we use $\mbox{\rm Max}_{\rm{reg}}^{-1}(R)$ to denote the set of maximal regular ideals of $R$ with respect to the inverse topology. Alternatively, this space can be viewed as $\mbox{\rm Max}_{Q(R)}^{-1}(R)$.

\begin{theorem}\label{Thm 7.4} The following are equivalent for a ring $R$ for which every regular ideal is a sum of invertible ideals.
\begin{enumerate}
\item $R$ is an SP-ring.
\item Each regular ideal is a product of (unique proper) radical ideals $J_1\subseteq\cdots\subseteq J_n$.
\item Every regular prime ideal is maximal and every invertible ideal of $R$ is a product of radical ideals of $R$.
\item Each regular prime ideal is maximal and contains an invertible radical ideal.
\item The radical of each regular finitely generated ideal is invertible.
\item The multiplicative lattice consisting of the regular ideals and the zero ideal is isomorphic to $U_b(\mbox{\rm Max}^{-1}_{\rm reg}(R))$.
\item $R$ is a Pr\"ufer ring for which the radical of each finitely generated regular ideal is finitely generated.
\end{enumerate}
\end{theorem}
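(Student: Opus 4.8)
The plan is to obtain Theorem~\ref{Thm 7.4} as the specialization of Lemma~\ref{Lem 7.2} to the ring extension $R\subseteq T:=Q(R)$, where $Q(R)$ is the total quotient ring of $R$; the whole argument consists of checking that, for this choice of $T$, every notion appearing in Lemma~\ref{Lem 7.2} becomes the corresponding notion in the statement of Theorem~\ref{Thm 7.4}.

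First I would set up the dictionary. As recalled before Lemma~\ref{Lem 7.1}, an ideal of $R$ is $Q(R)$-regular if and only if it is regular in the usual sense, so the multiplicative lattice of Lemma~\ref{Lem 7.1} is precisely the lattice consisting of the regular ideals and the zero ideal, its compact elements are the finitely generated regular ideals together with $0$ by Lemma~\ref{Lem 7.1}(2), and $\mbox{\rm Max}^{-1}_{Q(R)}(R)=\mbox{\rm Max}^{-1}_{\rm reg}(R)$ as topological spaces (as noted in the text). Since a fractional ideal of $R$ is by definition an $R$-submodule of $Q(R)$, an ideal is $Q(R)$-invertible exactly when it is invertible; hence the hypothesis of Theorem~\ref{Thm 7.4} that every regular ideal is a sum of invertible ideals is word for word the hypothesis of Lemma~\ref{Lem 7.2} for $T=Q(R)$ (and in particular makes the lattice principally generated), and by Lemma~\ref{Lem 7.1}(3) --- using that $R\subseteq Q(R)$ is tight, any ring of quotients being tight --- the nonzero $\ell$-principal, equivalently $\ell$-invertible, elements of the lattice are the invertible ideals of $R$. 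Finally, since $R\subseteq Q(R)$ is automatically tight, ``$R\subseteq Q(R)$ is a Pr\"ufer extension'' is simply the assertion that every finitely generated regular ideal is invertible, i.e.\ that $R$ is a Pr\"ufer ring.

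With this dictionary, statements (2)--(7) of Theorem~\ref{Thm 7.4} are the literal translations of statements (2)--(7) of Lemma~\ref{Lem 7.2}, and statement (1) --- that $R$ is an SP-ring --- is by definition statement (1) of Lemma~\ref{Lem 7.2} in this setting; so the theorem follows immediately from Lemma~\ref{Lem 7.2} (equivalently, from Remark~\ref{Rem 7.3} in the Pr\"ufer case). The only step I would expect to need a sentence of genuine verification --- rather than a bare invocation --- is the identification of $Q(R)$-invertibility with ordinary invertibility, which is what guarantees both that the lattice's $\ell$-invertible elements are the honest invertible ideals of $R$ and that the standing hypothesis transfers unchanged; everything else is bookkeeping.
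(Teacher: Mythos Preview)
Your proposal is correct and follows exactly the paper's own approach: the paper's proof is the single sentence ``Apply Lemma~\ref{Lem 7.2} in the case in which $T$ is the total quotient ring of $R$,'' and what you have written is precisely an elaboration of the dictionary that makes this one-line reduction go through.
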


\begin{proof} Apply Lemma~\ref{Lem 7.2} in the case in which $T$ is the total quotient ring of $R$.
\end{proof}

As in Remark~\ref{Rem 7.3}, a Pr\"ufer ring has the property that every regular ideal is a sum of invertible ideals, so we have the following corollary.

\begin{corollary}\label{Cor 7.5} Statements (1)--(7) of Theorem~\ref{Thm 7.4} are equivalent for a Pr\"ufer ring~$R$.
\end{corollary}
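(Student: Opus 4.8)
The plan is to obtain this as an immediate consequence of Theorem~\ref{Thm 7.4}. The only standing hypothesis in that theorem is that every regular ideal of $R$ is a sum of invertible ideals, and statements (1)--(7) of the corollary are verbatim statements (1)--(7) of Theorem~\ref{Thm 7.4}. So the single point I need to verify is that a Pr\"ufer ring satisfies this hypothesis.

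To do so, I would recall that $R$ is a Pr\"ufer ring precisely when every finitely generated regular ideal of $R$ is invertible (equivalently, when $R\subseteq Q(R)$ is a Pr\"ufer extension, using that every ring of quotients is tight, as noted before Lemma~\ref{Lem 7.1}). Given an arbitrary regular ideal $I$ of $R$, I would fix a nonzerodivisor $a\in I$ and observe that for each $x\in I$ the ideal $(a,x)$ is finitely generated and regular, hence invertible, and that $I=\sum_{x\in I}(a,x)$. Thus every regular ideal of $R$ is a sum of invertible ideals, and Theorem~\ref{Thm 7.4} applies to give the equivalence of (1)--(7).

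There is no genuine obstacle here; the content of the corollary is entirely absorbed by Theorem~\ref{Thm 7.4}, and the verification above is exactly the reasoning indicated in the paragraph preceding the statement and in Remark~\ref{Rem 7.3}. An equally short alternative, avoiding even that elementary verification, is to invoke Remark~\ref{Rem 7.3} with $T=Q(R)$: since $R\subseteq Q(R)$ is a Pr\"ufer extension, the equivalences in Lemma~\ref{Lem 7.2} hold unconditionally, and specializing $T$ to the total quotient ring translates them into the seven statements of Theorem~\ref{Thm 7.4}.
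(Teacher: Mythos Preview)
Your proof is correct and follows exactly the approach of the paper: the sentence preceding the corollary reduces everything to checking that a Pr\"ufer ring satisfies the standing hypothesis of Theorem~\ref{Thm 7.4}, and your verification that $I=\sum_{x\in I}(a,x)$ with each $(a,x)$ invertible is the standard (and intended) argument behind that reduction.
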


\begin{remark}\label{Rem 7.6} A Marot $N$-ring is a Pr\"ufer ring, so the equivalence of (1)--(6) in the corollary can also be viewed as a generalization of \cite[Theorem 2.12]{AD}.
\end{remark}

Specializing Theorem~\ref{Thm 7.4} to domains, we obtain the following characterization.

\begin{corollary}\label{Cor 7.7} The following are equivalent for an integral domain $R$.
\begin{enumerate}
\item Each proper ideal of $R$ is a product of radical ideals.
\item Each (nonzero) ideal is a product of (unique proper) radical ideals $J_1\subseteq\cdots\subseteq J_n$.
\item $\dim(R)\leq 1$ and each invertible ideal of $R$ is a product of radical ideals of $R$.
\item Each nonzero prime ideal is maximal and contains an invertible radical ideal.
\item The radical of each nonzero finitely generated ideal is invertible.
\item The multiplicative lattice of ideals of $R$ is isomorphic to $U_b(\mbox{\rm Max}^{-1}(R))$.
\item $R$ is a Pr\"ufer domain for which the radical of each finitely generated ideal is finitely generated.
\end{enumerate}
\end{corollary}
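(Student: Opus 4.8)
The plan is to deduce the corollary directly from Theorem~\ref{Thm 7.4} by taking $T=Q(R)$, the quotient field of the domain $R$. The first step is to check that the standing hypothesis of Theorem~\ref{Thm 7.4} --- that every regular ideal of $R$ is a sum of invertible ideals --- holds automatically here. Since $R$ is a domain, every nonzero element is a nonzerodivisor, so the regular ideals of $R$ are precisely its nonzero ideals; and any nonzero ideal $I$ satisfies $I=\sum_{0\ne a\in I}aR$, a sum of nonzero principal ideals, each of which is invertible. Thus Theorem~\ref{Thm 7.4} applies with no hypothesis on $R$ beyond being a domain, and by Lemma~\ref{Lem 7.1} the ambient multiplicative lattice is the lattice of all ideals of $R$.

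The second step is to read off each of the seven conditions of the corollary from the corresponding condition of Theorem~\ref{Thm 7.4} under the dictionary ``regular ideal $\leftrightarrow$ nonzero ideal.'' Statements (2) and (5) are then literally the same. For (3) and (4) one uses that a regular prime ideal is a nonzero prime, and that every nonzero prime being maximal is equivalent to $\dim R\leq 1$: since $(0)$ is prime, a strictly increasing chain of two nonzero primes would be a prime chain of length two, and conversely $\dim R\leq 1$ forbids distinct comparable nonzero primes. For (1), recalling that an SP-ring is one in which every regular ideal is a product of radical ideals, the domain case says every nonzero ideal is a product of radical ideals; since $\sqrt{0}=0$ the zero ideal is itself radical and $R$ is the empty product, so this is the same as saying every proper ideal of $R$ is a product of radical ideals. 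For (7), a Pr\"ufer ring whose total quotient ring is a field is exactly a Pr\"ufer domain. For (6), for a domain that is not a field the maximal regular ideals are exactly the maximal ideals, so $\mbox{\rm Max}^{-1}_{\rm reg}(R)=\mbox{\rm Max}^{-1}(R)$, and the lattice of regular ideals together with $(0)$ is the lattice of all ideals; the field case is degenerate.

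The final step is just to invoke the equivalence of (1)--(7) of Theorem~\ref{Thm 7.4}, now read through this dictionary, to conclude the equivalence of (1)--(7) of the corollary. I expect the only mildly delicate point --- the ``main obstacle'' --- to be the bookkeeping in translating statements (1) and (6): one must verify that the conventions about the zero ideal, the empty product, and the passage from maximal regular ideals to maximal ideals introduce no mismatch. Once these are checked, nothing further remains.
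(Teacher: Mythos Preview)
Your proposal is correct and matches the paper's approach: the paper presents Corollary~\ref{Cor 7.7} as an immediate specialization of Theorem~\ref{Thm 7.4} to the domain case, without writing out an explicit proof, and your dictionary ``regular ideal $\leftrightarrow$ nonzero ideal'' together with the observation that every nonzero ideal is a sum of principal (hence invertible) ideals is exactly what is needed to justify this specialization. Your flagging of the field case in condition~(6) is apt---indeed $\mbox{Max}_{\rm reg}^{-1}(R)=\emptyset$ while $\mbox{Max}^{-1}(R)=\{(0)\}$ for a field---and the paper simply does not address this degenerate case.
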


The equivalence of (1) and (2) can also be found in \cite[Lemma 4.2]{HOR} (see also \cite[Theorem 2.1]{Olb}). The rings satisfying (1) are known as $SP$-domains in the literature (where ``SP'' stands for semi-prime). See \cite{Olb} for background on this class rings, and see also \cite{AD,FHL,HOR, MC, R} for more characterizations and properties of these rings.

We mention another consequence of Lemma~\ref{Lem 7.2}, this one concerned with a ``neighborhood'' version of radical factorization that shows that invertible zero-dimensional radical ideals give rise to ideals that factor into radical ideals.

\begin{theorem}\label{Thm 7.8} Let $R$ be a commutative ring. If $J$ is an invertible radical zero-dimensional ideal, then every ideal that contains a power of $J$ can be written uniquely as a product of proper radical ideals $J_1\subseteq\cdots\subseteq J_n$.
\end{theorem}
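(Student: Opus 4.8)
The plan is to obtain the \emph{existence} of the factorization from Corollary~\ref{Cor 3.2} and its \emph{uniqueness} from a localization argument. First I would record the lattice setup. Since $J$ is invertible it is regular, hence so is every power $J^t$ and every ideal $I$ of $R$ containing such a power; we may therefore work inside the $C$-lattice domain $\mathcal{L}$ consisting of the regular ideals of $R$ together with $0$, as in Lemma~\ref{Lem 7.1} (taking $T$ to be the total quotient ring). In $\mathcal{L}$ the element $J$ is compact (being finitely generated), $\ell$-radical (being a radical ideal), weak meet principal (being $\ell$-principal by Lemma~\ref{Lem 7.1}(3)), and zero-dimensional (every prime of $R$ above $J$ is maximal, and these are exactly the maximal elements of $\mathcal{L}$ above $J$); and, $J$ being finitely generated, an ideal $I$ contains a power of $J$ if and only if $J\leq\sqrt{I}$. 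Thus Corollary~\ref{Cor 3.2} applies with $y=J$: for every such $I$ with $I\ne R$ we get radical ideals $y_1\subseteq\cdots\subseteq y_k$ with $I=y_1\cdots y_k$, and deleting those $y_i$ equal to $R$ (which form a terminal segment of the chain) leaves a factorization $I=J_1\cdots J_n$ into proper radical ideals with $J_1\subseteq\cdots\subseteq J_n$ and $n\geq 1$. (For $I=R$ one takes the empty product.) This gives existence.

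For uniqueness I would analyze $I$ locally. Fix $I\ne R$ containing a power $J^t$ of $J$. For $M\in\mathrm{Max}(R)$ with $J\not\subseteq M$ we have $I_M=R_M$. For $M\in\mathrm{Max}(R)$ with $J\subseteq M$ I claim $J_M=M_M$: being invertible, $J$ localizes to a principal ideal $J_M=aR_M$ with $a$ a non-zero-divisor of $R_M$ (invertible ideals are locally free of rank one); since taking radicals commutes with localization, $J_M$ is a radical ideal; and since every prime of $R$ above $J$ is maximal, $M_M$ is the only prime of $R_M$ above $J_M$, so $\sqrt{J_M}=M_M$ and hence $J_M=M_M$. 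Thus $M_M=aR_M$ is principal, generated by a non-zero-divisor, and an elementary induction shows that the only ideals of $R_M$ containing a power of $M_M$ are the pairwise distinct ideals $R_M=M_M^{0}\supsetneq M_M^{1}\supsetneq M_M^{2}\supsetneq\cdots$. Since $M_M^{t}\subseteq I_M$, it follows that $I_M=M_M^{e(M)}$ for a unique $e(M)\in\mathbb{N}_0$; moreover $e(M)\geq 1$ implies $I\subseteq M$, which with $J^t\subseteq I$ forces $J\subseteq M$. So the function $e\colon\mathrm{Max}(R)\to\mathbb{N}_0$ (with the convention $e(M)=0$ when $J\not\subseteq M$) depends only on $I$, has finite image, and we may set $C_i=\{M\in\mathrm{Max}(R)\mid e(M)\geq i\}$ for $i\geq 1$.

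Now suppose $I=J_1\cdots J_n$ with $J_1\subseteq\cdots\subseteq J_n$ proper radical ideals. From $I\subseteq J_1$ and $J_1$ radical we get $J\leq\sqrt{I}\subseteq J_1\subseteq J_i$ for all $i$, so each $J_i$ is a radical ideal all of whose primes are maximal; hence $(J_i)_M=M_M$ if $J_i\subseteq M$ and $(J_i)_M=R_M$ otherwise. Localizing $I=J_1\cdots J_n$ at a maximal ideal $M\supseteq J$ gives $M_M^{e(M)}=M_M^{\,\lvert\{i\,:\,J_i\subseteq M\}\rvert}$, whence $e(M)=\lvert\{i:J_i\subseteq M\}\rvert$ since distinct powers of $M_M$ are distinct. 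Because $J_1\subseteq\cdots\subseteq J_n$, the set $\{i:J_i\subseteq M\}$ is an initial segment of $\{1,\ldots,n\}$, so $J_i\subseteq M\iff e(M)\geq i\iff M\in C_i$; that is, $\{M\in\mathrm{Max}(R):J_i\subseteq M\}=C_i$. Since $\lvert\{i:J_i\subseteq M\}\rvert\leq n$ for every $M$, with equality whenever $M$ lies above the proper ideal $J_n$, we get $n=\max_M e(M)$. Finally each $J_i$, being radical with only maximal primes, equals $\bigcap\{M\in\mathrm{Max}(R):J_i\subseteq M\}=\bigcap_{M\in C_i}M$. Hence $n$ and all the $J_i$ are determined by $I$, which is uniqueness.

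The main obstacle, I expect, is the localization step: proving $J_M=M_M$ for $M\supseteq J$ (which relies on invertibility, passage to the radical, and zero-dimensionality all surviving localization) and deducing the list of ideals of $R_M$, together with the bookkeeping that reassembles the local exponents $e(M)$ into the global factors $\bigcap_{M\in C_i}M$. By contrast the existence half is essentially immediate from Corollary~\ref{Cor 3.2} once the hypotheses on $J$ are matched to the lattice-theoretic vocabulary.
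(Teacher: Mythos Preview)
Your argument is correct, but it follows a genuinely different route from the paper's proof. The paper constructs the auxiliary extension $T=\bigcup_{k\geq 0}(R:_{Q(R)}J^{k})$, observes that the $T$-regular ideals are exactly the ideals containing a power of $J$, and then invokes Lemma~\ref{Lem 7.2}; both existence and uniqueness thus come packaged through Theorem~\ref{Thm 4.6} and the representation machinery of Sections~5--6 (Theorem~\ref{Thm 6.5} and Corollary~\ref{Cor 6.7}). You instead stay inside the lattice of ordinary regular ideals: existence is obtained directly from Corollary~\ref{Cor 3.2} (which is essentially the same engine, Theorem~\ref{Thm 3.1}, just applied without building a new $T$), and uniqueness is handled by an elementary localization argument---showing $J_M=M_M$ is principal on a nonzerodivisor, reading off the exponent $e(M)$, and reconstructing each $J_i$ as $\bigcap_{M\in C_i}M$. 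Your uniqueness proof is more self-contained and avoids the upper-semicontinuous-function formalism entirely; the paper's approach, by contrast, is shorter on the page and simultaneously yields the structural isomorphism recorded in Remark~\ref{Rem 7.9}, namely that the lattice of ideals containing a power of $J$ is isomorphic to $U_b(\mathrm{Max}(R/J))$.
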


\begin{proof} Let $Q(R)$ be the total quotient ring of $R$, and let $T=\bigcup_{k\in\mathbb{N}_0} (R:_{Q(R)}J^k)$. Then $R\subseteq T$ is a tight extension for which every $T$-regular ideal is a sum of invertible $T$-ideals. The multiplicative lattice consisting of the zero ideal and the ideals containing a power of $J$ is the lattice of $T$-regular ideals and the zero ideal. The claim now follows from Lemma~\ref{Lem 7.2}.
\end{proof}

\begin{remark}\label{Rem 7.9} Under the hypotheses of Theorem~\ref{Thm 7.8}, we obtain also from Lemma~\ref{Lem 7.2} that the multiplicative lattice $\mathcal{L}$ consisting of the zero ideal and the ideals that contain a power of $J$ is isomorphic to $U_b(\mbox{\rm Max}_T^{-1}(R))$. Since the maximal $T$-regular ideals are the maximal ideals of $R$ containing $J$, it follows that $\mbox{\rm Max}_T^{-1}(R)$ and $\mbox{\rm Max}^{-1}(R/J)$ are homeomorphic. Since $\dim(R/J)=0$, the Zariski and inverse topologies agree on $\mbox{\rm Max}(R/J)$ \cite[Lemma 6.3]{HOR}. Therefore, $\mbox{\rm Max}^{-1}(R/J)=\mbox{\rm Max}(R/J)$, and hence $\mathcal{L}\cong U_b(\mbox{\rm Max}(R/J)).$
\end{remark}

\begin{remark}\label{Rem 7.10} Ahmed and Dumitrescu \cite{AD} define a ring $R$ to be an {\it SSP-ring} if every ideal of $R$ is a product of radical ideals. Remark~\ref{Rem 4.5} generalizes a theorem of Ahmed and Dumitrescu \cite{AD} that states an SSP-ring is an almost multiplication ring, i.e., a ring $R$ such that $R_M$ is a discrete valuation domain or a special principal ideal ring for all $M\in\mbox{\rm Max}(R)$.
\end{remark}

\section{Ideal systems and star operations}

We next apply the results of Section 2--6 to ideal systems of monoids, which allows us to give characterizations of the radical factorization property for subclasses of ideals subject to natural closure conditions. At the end of this section, we translate these characterizations into the context of modular star operations on integral domains.

\begin{center}
\textit{Throughout this section let $H$ be a commutative multiplicative monoid.}
\end{center}

By $z(H)$ we denote the set of zero elements of $H$ (i.e., the set of elements $z\in H$ for which $xz=z$ for each $x\in H$) and by $\mathbb{P}(H)$ we denote the power set of $H$. We say that $H$ is cancellative if every $x\in H\setminus z(H)$ is cancellative. Let $r:\mathbb{P}(H)\rightarrow\mathbb{P}(H)$, $X\mapsto X_r$ be a map. We say that $r$ is a weak ideal system on $H$ if $r$ satisfies the following properties for all $X,Y\subseteq H$ and $c\in H$.
\begin{itemize}
\item[(A)] $XH\cup z(H)\subseteq X_r$.
\item[(B)] If $X\subseteq Y_r$, then $X_r\subseteq Y_r$.
\item[(C)] $cX_r\subseteq (cX)_r$.
\end{itemize}

Let $r$ be a weak ideal system on $H$. We say that $r$ is an ideal system on $H$ if $cX_r=(cX)_r$ for all $X\subseteq H$ and $c\in H$. Moreover, $r$ is called finitary if $X_r=\bigcup_{E\subseteq X,|E|<\infty} E_r$ for all $X\subseteq H$ (equivalently, $X_r\subseteq\bigcup_{E\subseteq X,|E|<\infty} E_r$ for all $X\subseteq H$). We say that a subset $X\subseteq H$ is an $r$-ideal of $H$ if $X_r=X$. Moreover, $X_r$ is called the $r$-closure of $X$.

A cancellative element $x\in H$ is called regular if $(xA)_r=xA_r$ for all $A\subseteq H$. We say that
\begin{enumerate}
\item[(1)] $I$ is proper if $I\subsetneq H$.
\item[(2)] $I$ is nontrivial if $z(H)\subsetneq I$.
\item[(3)] $I$ is regular if it contains a regular element of $H$.
\item[(4)] $I$ is $r$-invertible if $(IJ)_r=yH$ for some $J\subseteq H$ and some regular $y\in H$.
\end{enumerate}
By $\mathcal{I}_r(H)$ (resp. $\mathcal{I}_r(H)_{\rm reg}$) we denote the set of $r$-ideals (resp. the set of regular $r$-ideals together with the bottom element $\emptyset_r$) of $H$. Let $I$ be an $r$-ideal of $H$. We say that $I$ is $r$-finitely generated if $I=E_r$ for some finite $E\subseteq I$. Furthermore, we say that $r$ is modular if for all $r$-ideals $I,J,N$ of $H$ with $I\subseteq N$ it follows that $(I\cup J)_r\cap N\subseteq (I\cup (J\cap N))_r$ (equivalently: for all $r$-ideals $I,J,N$ of $H$ with $I\subseteq N$ it follows that $(I\cup J)_r\cap N=(I\cup (J\cap N))_r$).

Next we introduce two important ideal systems, namely the $s$-system and the $d$-system. Let $R$ be a ring.

Let $$s:\mathbb{P}(H)\rightarrow\mathbb{P}(H), X\mapsto XH\cup z(H).$$

Let $$d:\mathbb{P}(R)\rightarrow\mathbb{P}(R),X\mapsto {}_R(X),$$ where is ${}_R(X)$ is the (ring) ideal of $R$ generated by $X$. It is straightforward to prove that $s$ resp. $d$ are modular finitary ideal systems on $H$ resp. $R$.

Let $r$ be a weak ideal system on $H$. We define the $r$-multiplication $\mathcal{I}_r(H)\times\mathcal{I}_r(H)\rightarrow\mathcal{I}_r(H)$ by $(I,J)\mapsto (IJ)_r$. Then $\mathcal{I}_r(H)$ is a multiplicative lattice, where the multiplication is the $r$-multiplication and the partial order is inclusion (see \cite[Chapter 8]{HA}). Also note that if $\mathcal{J}\subseteq\mathcal{I}_r(H)$, then $\bigvee\mathcal{J}=(\bigcup_{J\in\mathcal{J}} J)_r$ and $\bigwedge\mathcal{J}=\bigcap_{J\in\mathcal{J}} J$. Moreover, $\mathcal{I}_r(H)_{\rm reg}$ forms a multiplicative lattice domain under restricted $r$-multiplication and inclusion. (Clearly, finite $r$-products and arbitrary joins of regular $r$-ideals are regular. The meet of elements in $\mathcal{I}_r(H)_{\rm reg}$ is the meet of these elements in $\mathcal{I}_r(H)$ if it is regular and the bottom element otherwise.)

\begin{lemma}\label{Lem 8.1} Let $r$ be a weak ideal system on $H$. Each compact element of $\mathcal{I}_r(H)$ and each compact element of $\mathcal{I}_r(H)_{\rm reg}$ is $r$-finitely generated and the following are equivalent:
\begin{enumerate}
\item[(A)] $r$ is finitary.
\item[(B)] Every $r$-finitely generated $r$-ideal of $H$ is a compact element of $\mathcal{I}_r(H)$.
\item[(C)] $\{x\}_r$ is a compact element of $\mathcal{I}_r(H)$ for every $x\in H$.
\end{enumerate}
If these equivalent conditions are satisfied, then $\mathcal{I}_r(H)$ and $\mathcal{I}_r(H)_{\rm reg}$ are both $C$-lattices.
\end{lemma}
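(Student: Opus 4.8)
The plan is to prove the (unconditional) ``$r$-finitely generated'' assertions first, then the chain of equivalences, and finally the $C$-lattice conclusion. The one computational fact I would isolate and use throughout is that for every $X\subseteq H$ one has $X_r=\bigvee_{x\in X}\{x\}_r$ in $\mathcal{I}_r(H)$: indeed $X\subseteq\bigcup_{x\in X}\{x\}_r\subseteq X_r$ by the weak ideal system axioms, so applying $r$ gives $(\bigcup_{x\in X}\{x\}_r)_r=X_r$. Now if $I$ is a compact element of $\mathcal{I}_r(H)$, then $I=\bigvee_{x\in I}\{x\}_r$, and compactness yields finitely many $x_1,\dots,x_n\in I$ with $I\leq\{x_1\}_r\vee\cdots\vee\{x_n\}_r=(\{x_1,\dots,x_n\})_r$; the reverse containment is clear, so $I=E_r$ with $E=\{x_1,\dots,x_n\}$ finite. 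For a compact element $I$ of $\mathcal{I}_r(H)_{\rm reg}$ with $I\neq\emptyset_r$, fix a regular $y\in I$ and write $I=\bigvee_{x\in I}(\{y,x\})_r$, a join of regular $r$-finitely generated $r$-ideals; here I use that joins of regular $r$-ideals in $\mathcal{I}_r(H)_{\rm reg}$ agree with their joins in $\mathcal{I}_r(H)$. Compactness in $\mathcal{I}_r(H)_{\rm reg}$ then exhibits $I$ as $(\{y,x_1,\dots,x_n\})_r$, and $\emptyset_r=\emptyset_r$ is trivially $r$-finitely generated.

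For the equivalences, (B)$\Rightarrow$(C) is immediate since $\{x\}_r$ is $r$-finitely generated. For (C)$\Rightarrow$(A): given $X\subseteq H$ and $y\in X_r$, apply compactness of $\{y\}_r$ to $\{y\}_r\leq X_r=\bigvee_{x\in X}\{x\}_r$ to obtain a finite $E\subseteq X$ with $y\in E_r$; thus $X_r\subseteq\bigcup_{E\subseteq X\text{ finite}}E_r$, and the reverse inclusion is axiom (B), so $r$ is finitary. For (A)$\Rightarrow$(B): let $I=E_r$ with $E=\{x_1,\dots,x_n\}$ and suppose $I\leq\bigvee_\alpha J_\alpha=(\bigcup_\alpha J_\alpha)_r$. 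Finitariness puts each $x_i$ into $(F_i)_r$ for some finite $F_i\subseteq\bigcup_\alpha J_\alpha$; the finite set $\bigcup_i F_i$ lies in $J_{\alpha_1}\cup\cdots\cup J_{\alpha_m}$ for finitely many indices, so $E\subseteq(J_{\alpha_1}\cup\cdots\cup J_{\alpha_m})_r=J_{\alpha_1}\vee\cdots\vee J_{\alpha_m}$, and hence $I=E_r$ lies below this finite subjoin. Thus $I$ is compact.

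Finally, assuming (A)--(C), I would verify the two defining properties of a $C$-lattice. Every $r$-ideal is $\bigvee_{x\in I}\{x\}_r$, a join of compact elements by (C); in $\mathcal{I}_r(H)_{\rm reg}$ one instead uses $I=\bigvee_{x\in I}(\{y,x\})_r$ for a regular $y\in I$, together with the observation that $r$-finitely generated regular $r$-ideals are compact in $\mathcal{I}_r(H)_{\rm reg}$ (which again follows from (B), since joins agree). Multiplicative closure of the compact elements comes down to the identity $(E_rF_r)_r=(EF)_r$ for finite $E,F$ (a routine consequence of the weak ideal system axioms) and the fact that $\{1\}_r=H$ is the compact top element: the $r$-product of two $r$-finitely generated (respectively, $r$-finitely generated regular) $r$-ideals is again of that form, hence compact by (B). I expect the only real obstacle to be the bookkeeping for $\mathcal{I}_r(H)_{\rm reg}$, where one must consistently use that its joins coincide with those of $\mathcal{I}_r(H)$ (while its meets need not) and keep the adjoined bottom element $\emptyset_r$ under control; once this principle is isolated, the argument runs parallel to the one for $\mathcal{I}_r(H)$.
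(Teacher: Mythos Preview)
Your proposal is correct and follows essentially the same approach as the paper: both arguments hinge on writing $I=\bigvee_{x\in I}\{x\}_r$ (respectively $I=\bigvee_{x\in I}\{x,y\}_r$ with $y$ regular) and exploiting that joins in $\mathcal{I}_r(H)_{\rm reg}$ agree with those in $\mathcal{I}_r(H)$, and both run through the same cycle of implications. The only cosmetic differences are that you make the identity $(E_rF_r)_r=(EF)_r$ explicit for multiplicative closure (the paper leaves this implicit), and your phrase ``the reverse inclusion is axiom (B)'' risks momentary confusion with condition (B) of the lemma---you might flag that you mean the weak ideal system axiom.
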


\begin{proof} Let $I\in\mathcal{I}_r(H)$ be compact. Then $I=I_r=(\bigcup_{x\in I}\{x\}_r)_r=\bigvee\{\{x\}_r\mid x\in I\}$, and hence there is some finite $E\subseteq I$ such that $I=\bigvee\{\{x\}_r\mid x\in E\}$. Therefore, $I=E_r$ is $r$-finitely generated.

Now let $A\in\mathcal{I}_r(H)_{\rm reg}$ be nontrivial and compact and let $z\in A$ be regular. Since $A=\bigvee\{\{x,z\}_r\mid x\in A\}$, we have that $A=\bigvee\{\{x,z\}_r\mid x\in F\}$ for some finite $F\subseteq A$. Therefore, $A=(F\cup\{z\})_r$ is $r$-finitely generated.

(A) $\Rightarrow$ (B) Let $r$ be finitary, $I$ an $r$-finitely generated $r$-ideal of $H$ and $\mathcal{E}$ a set of $r$-ideals of $H$ such that $I\leq\bigvee\mathcal{E}$. There is some finite $F\subseteq I$ such that $I=F_r$. We have that $F\subseteq (\bigcup_{J\in\mathcal{E}} J)_r$. Since $r$ is finitary, there is some finite $F^{\prime}\subseteq\bigcup_{J\in\mathcal{E}} J$ such that $F\subseteq (F^{\prime})_r$. Clearly, there is some finite $\mathcal{E}^{\prime}\subseteq\mathcal{E}$ such that $F^{\prime}\subseteq\bigcup_{J\in\mathcal{E}^{\prime}} J$, and thus $I=F_r\subseteq F^{\prime}\subseteq (\bigcup_{J\in\mathcal{E}^{\prime}} J)_r$. We infer that $I\leq\bigvee\mathcal{E}^{\prime}$, and hence $I$ is compact.

(B) $\Rightarrow$ (C) Trivial.

(C) $\Rightarrow$ (A) Let $X\subseteq H$ and $x\in X$. Since $\{x\}_r$ is compact and $\{x\}_r\leq X_r=\bigvee\{\{y\}_r\mid y\in X\}$, there is some finite $E\subseteq X$ such that $\{x\}_r\leq\bigvee\{\{y\}_r\mid y\in E\}$ and hence $x\in\{x\}_r\subseteq (\bigcup_{y\in E}\{y\}_r)_r=E_r$.

Now let the above conditions be satisfied. We infer that the set of compact elements of $\mathcal{I}_r(H)$ is the set of $r$-finitely generated $r$-ideals of $H$, and hence it is multiplicatively closed. Since $J=\bigvee\{\{x\}_r\mid x\in J\}$ for each $J\in\mathcal{I}_r(H)$, we infer that $\mathcal{I}_r(H)$ is a $C$-lattice.

Clearly, every compact element of $\mathcal{I}_r(H)$ that is an element of $\mathcal{I}_r(H)_{\rm reg}$ is a compact element of $\mathcal{I}_r(H)_{\rm reg}$. We infer that the set of compact elements of $\mathcal{I}_r(H)_{\rm reg}$ is the set of regular $r$-finitely generated $r$-ideals of $H$ together with the bottom element, and thus it is multiplicatively closed. If $J\in\mathcal{I}_r(H)_{\rm reg}$ and $y\in J$ is regular, then since $J=\bigvee\{\{x,y\}_r\mid x\in J\}$ is a join of compact elements of $\mathcal{I}_r(H)_{\rm reg}$, we have that $\mathcal{I}_r(H)_{\rm reg}$ is a $C$-lattice.
\end{proof}

\begin{lemma}\label{Lem 8.2} Let $r$ be a weak ideal system on $H$.
\begin{enumerate}
\item[(1)] Every $r$-invertible $r$-ideal of $H$ is a weak meet principal and cancellative element of both $\mathcal{I}_r(H)$ and $\mathcal{I}_r(H)_{\rm reg}$.
\item[(2)] If $r$ is modular, then every $r$-invertible $r$-ideal of $H$ is an $\ell$-invertible element of both $\mathcal{I}_r(H)$ and $\mathcal{I}_r(H)_{\rm reg}$.
\item[(3)] Every $\ell$-invertible element of $\mathcal{I}_r(H)_{\rm reg}$ is $r$-invertible.
\end{enumerate}
\end{lemma}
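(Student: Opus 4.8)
The plan is to prove~(1) by a direct ideal-theoretic computation, deduce~(2) from~(1) via Lemma~\ref{Lem 2.10}, and prove~(3) by essentially reversing the computation in~(1).

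For~(1), let $I$ be an $r$-invertible $r$-ideal, say $(IJ)_r=yH$ with $y\in H$ regular. First I would tidy up the hypothesis: since $IJ\subseteq IJ_r\subseteq(IJ)_r$, axioms~(B) and~(C) give $(IJ_r)_r=(IJ)_r$, so we may assume $J$ is an $r$-ideal; then $IJ\subseteq HJ\subseteq J$ and $IJ\subseteq I$, hence $(IJ)_r\subseteq I\cap J$, so the regular element $y$ lies in both $I$ and $J$. In particular $I$ is a \emph{regular} $r$-ideal, which is what makes the assertion about $\mathcal{I}_r(H)_{\mathrm{reg}}$ meaningful. Cancellativity in $\mathcal{I}_r(H)$ is then easy: if $(IK)_r=(IL)_r$ for $r$-ideals $K,L$, multiply through by $J$ and use associativity of the $r$-multiplication together with the identity $(yHM)_r=(yM)_r=yM$ for an $r$-ideal $M$ (valid because $HM=M$ and $y$ is regular) to get $yK=yL$; since $y$ is cancellative in $H$, a pointwise comparison yields $K=L$. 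The same computation applies in $\mathcal{I}_r(H)_{\mathrm{reg}}$, using additionally that $r$-products of regular $r$-ideals are regular and that $\bot\cdot x=\bot$ for every $x$.

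The substance of~(1) is the identity $I\wedge K=((K:I)I)_r$ for every $r$-ideal $K$, i.e.\ weak meet principality. The inclusion $((K:I)I)_r\subseteq I\cap K$ is routine: any $A$ with $(AI)_r\subseteq K$ also satisfies $(AI)_r\subseteq I$ because $AI\subseteq I$, and joining over such $A$ gives it. For the reverse inclusion I would ``divide by $y$'': put $B=(J(I\cap K))_r$; then $B\subseteq(JI)_r=yH$, so by cancellativity of $y$ the set $C=\{c\in H:yc\in B\}$ satisfies $yC=B$, and $C$ is an $r$-ideal since $yC_r=(yC)_r=B=yC$ and $y$ cancels. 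Now, using that $y$ is regular and the standard rules $(X_rY)_r=(XY)_r$,
\[
y(CI)_r=(yCI)_r=(BI)_r=((JI)_r(I\cap K))_r=(yH(I\cap K))_r=y(I\cap K),
\]
so $(CI)_r=I\cap K\subseteq K$; hence $C\subseteq(K:I)$ and therefore $I\cap K=(CI)_r\subseteq((K:I)I)_r$. To transfer this to $\mathcal{I}_r(H)_{\mathrm{reg}}$ I would note that for regular $r$-ideals $A,B$ one has $(AB)_r\subseteq A\cap B$ and $(AB)_r$ is regular, so the meets, residuals and products entering the identity are regular and coincide with those of $\mathcal{I}_r(H)$; the case $K=\bot$ follows from $\bot\cdot x=\bot$ and cancellativity of $I$.

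For~(2), the point is that, by the very definition, $r$ is modular exactly when $\mathcal{I}_r(H)$ is a modular lattice, and then $\mathcal{I}_r(H)_{\mathrm{reg}}$ is modular too: for regular $r$-ideals $I\subseteq N$ and a regular $r$-ideal $J$, the meet $J\cap N$ contains the regular ideal $(JN)_r$ and is therefore regular, so the modular law in $\mathcal{I}_r(H)_{\mathrm{reg}}$ reduces to that in $\mathcal{I}_r(H)$ (the cases involving $\bot$ being trivial). Combining this with~(1) and Lemma~\ref{Lem 2.10} gives that every $r$-invertible $r$-ideal is $\ell$-invertible in both lattices. For~(3), let $I$ be $\ell$-invertible in $\mathcal{I}_r(H)_{\mathrm{reg}}$; since $I$ is cancellative it is not the bottom element, so $I$ is a regular $r$-ideal and we may choose a regular $y\in I$. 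Then $yH\subseteq I$, so $(yH)_r\leq I$, and since $I$ is meet principal, hence weak meet principal, $(yH)_r=I\wedge(yH)_r=(((yH)_r:I)I)_r$, the residual taken in $\mathcal{I}_r(H)_{\mathrm{reg}}$; as $(yH)_r=yH_r=yH$ because $y$ is regular, setting $J:=((yH)_r:I)\subseteq H$ gives $(IJ)_r=yH$, so $I$ is $r$-invertible. The step I expect to be the main obstacle is the reverse inclusion in the weak meet principal identity of~(1): one must descend from the abstract residual $(K:I)$ to a concrete $r$-ideal of $H$, and all the axioms of an ideal system together with the two defining properties of a regular element get consumed in checking that $yC=B$, that $C$ is an $r$-ideal, and that the displayed chain of $r$-closures is valid; a secondary, pervasive but minor issue is verifying that meets, residuals and products agree in $\mathcal{I}_r(H)$ and in its regular sublattice, for which the inclusion $(AB)_r\subseteq A\cap B$ and the fact that $r$-invertible $r$-ideals contain a regular element do all the work.
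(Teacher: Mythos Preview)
Your proposal is correct and follows essentially the same route as the paper: the cancellativity argument is identical, the weak meet principal step is the same ``divide by $y$'' construction (your $C$ is the paper's $A$, your $I\cap K$ is the paper's $J\subseteq I$), part~(2) is Lemma~\ref{Lem 2.10}(3), and part~(3) is the same application of weak meet principality to $yH\subseteq I$. If anything, you are more careful than the paper in checking that the relevant meets, joins and residuals in $\mathcal{I}_r(H)_{\mathrm{reg}}$ agree with those in $\mathcal{I}_r(H)$ and that $\mathcal{I}_r(H)_{\mathrm{reg}}$ inherits modularity, points the paper passes over as ``clear''.
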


\begin{proof} (1) Let $I$ be an $r$-invertible $r$-ideal of $H$. It is clear that $I\in\mathcal{I}_r(H)_{\rm reg}$. Therefore, it is sufficient to show that $I$ is a weak meet principal and cancellative element of $\mathcal{I}_r(H)$. There are some $B\in\mathcal{I}_r(H)$ and some regular $y\in H$ such that $(IB)_r=yH$.

First we show that $I$ is a cancellative element of $\mathcal{I}_r(H)$. Let $J,L\in\mathcal{I}_r(H)$ be such that $(IJ)_r=(IL)_r$. Then $yJ=(yJ)_r=(IBJ)_r=((IJ)_rB)_r=((IL)_rB)_r=(IBL)_r=(yL)_r=yL$, and thus $J=L$.

Next we show that $I$ is weak meet principal. It is sufficient to show that for each $J\in\mathcal{I}_r(H)$ such that $J\subseteq I$, there is some $A\in\mathcal{I}_r(H)$ such that $J=(AI)_r$. Let $J\in\mathcal{I}_r(H)$ be such that $J\subseteq I$. Then $(JB)_r\subseteq (IB)_r=yH$. Set $A=(\{z\in H\mid yz\in (JB)_r\})_r$. Then $A\in\mathcal{I}_r(H)$. Let $x\in (JB)_r$. Then $x=yz$ for some $z\in H$. We have that $z\in A$, and thus $x\in yA$. Moreover, $yA=(y\{z\in H\mid yz\in (JB)_r\})_r\subseteq (JB)_r$. Therefore, $(JB)_r=yA$, and thus $yJ=(yHJ)_r=((IB)_rJ)_r=(IBJ)_r=(JBI)_r=((JB)_rI)_r=(yAI)_r=y(AI)_r$. We infer that $J=(AI)_r$.

(2) This is an immediate consequence of (1) and Lemma~\ref{Lem 2.10}(3).

(3) Let $I$ be an $\ell$-invertible element of $\mathcal{I}_r(H)_{\rm reg}$. There is some regular $x\in I$. Since $xH\subseteq I$ and $I$ is weak meet principal, it follows that $xH=xH\cap I=((xH:I)I)_r$. On the other hand, $(xH:I)\in\mathcal{I}_r(H)$, and thus $I$ is $r$-invertible.
\end{proof}

\begin{corollary}\label{Cor 8.3} Let $r$ be a modular weak ideal system on $H$ such that every regular $r$-ideal of $H$ is an $r$-union of $r$-invertible $r$-ideals of $H$. Then $\mathcal{I}_r(H)_{\rm reg}$ is a principally generated lattice.
\end{corollary}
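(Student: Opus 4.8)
The goal is to show that $\mathcal{I}_r(H)_{\rm reg}$ is principally generated, i.e., every element is a join of $\ell$-principal elements. By Lemma~\ref{Lem 8.2}(2), since $r$ is modular, every $r$-invertible $r$-ideal of $H$ is an $\ell$-invertible (in particular $\ell$-principal) element of $\mathcal{I}_r(H)_{\rm reg}$; also the bottom element $\emptyset_r$ is trivially a join of $\ell$-principal elements (the empty join). So the plan reduces to showing that every regular $r$-ideal $I$ of $H$ is the join, in $\mathcal{I}_r(H)_{\rm reg}$, of the $r$-invertible $r$-ideals it contains.

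\emph{First I would} fix a regular $r$-ideal $I$ and let $\mathcal{A}$ be the set of $r$-invertible $r$-ideals $A$ with $A \subseteq I$. By hypothesis $I$ is an $r$-union of $r$-invertible $r$-ideals, say $I = (\bigcup_{j} A_j)_r$ with each $A_j$ an $r$-invertible $r$-ideal; since each $A_j \subseteq I_r = I$, each $A_j \in \mathcal{A}$, so $\mathcal{A}$ is nonempty and $I = (\bigcup_{A \in \mathcal{A}} A)_r$. \emph{Next} I would check that this $r$-closure of the union is exactly the join $\bigvee \mathcal{A}$ computed in the lattice $\mathcal{I}_r(H)_{\rm reg}$. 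Recall from the text that joins in $\mathcal{I}_r(H)$ are given by $\bigvee \mathcal{J} = (\bigcup_{J \in \mathcal{J}} J)_r$, and the embedding $\mathcal{I}_r(H)_{\rm reg} \hookrightarrow \mathcal{I}_r(H)$ preserves arbitrary joins (finite $r$-products and arbitrary joins of regular $r$-ideals are regular, as noted before Lemma~\ref{Lem 8.1}). Hence $\bigvee_{\mathcal{I}_r(H)_{\rm reg}} \mathcal{A} = (\bigcup_{A \in \mathcal{A}} A)_r = I$, which is what we want.

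\emph{The main point to be careful about} is the interplay between the two lattices $\mathcal{I}_r(H)$ and $\mathcal{I}_r(H)_{\rm reg}$: an element being $\ell$-principal is a statement quantifying over all lattice elements, so one must confirm that an $r$-invertible $r$-ideal remains $\ell$-principal when we restrict attention to the sublattice $\mathcal{I}_r(H)_{\rm reg}$ — but Lemma~\ref{Lem 8.2}(2) already asserts $\ell$-invertibility (hence $\ell$-principality) in \emph{both} lattices, so this is handled. The other minor point is that "$r$-union" in the hypothesis means precisely the $r$-closure of the set-theoretic union, which matches the join formula; once this is pinned down, the argument is immediate. I do not expect any serious obstacle: the corollary is essentially a bookkeeping consequence of Lemma~\ref{Lem 8.2}(2) together with the explicit description of joins in these lattices.
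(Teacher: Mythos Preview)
Your proposal is correct and follows essentially the same approach as the paper: invoke Lemma~\ref{Lem 8.2}(2) to see that $r$-invertible $r$-ideals are $\ell$-invertible in $\mathcal{I}_r(H)_{\rm reg}$, and then observe that the $r$-union defining each regular $r$-ideal coincides with the join in $\mathcal{I}_r(H)_{\rm reg}$. The paper's proof is the same two-step argument compressed into two sentences; your version simply spells out the bookkeeping (the bottom element, why the join in the sublattice agrees with the $r$-union) more explicitly.
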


\begin{proof} We infer by Lemma \ref{Lem 8.2}(2) that every $r$-invertible $r$-ideal of $H$ is an $\ell$-invertible element of $\mathcal{I}_r(H)_{\rm reg}$. Consequently, $\mathcal{I}_r(H)_{\rm reg}$ is principally generated, since the $r$-union of elements in $\mathcal{I}_r(H)_{\rm reg}$ is their join in $\mathcal{I}_r(H)_{\rm reg}$.
\end{proof}

Let $r$ be a weak ideal system on $H$. We say that $H$ is an $r$-Pr\"ufer monoid if every regular $r$-finitely generated $r$-ideal is $r$-invertible. Note that every $r$-Pr\"ufer monoid satisfies the condition that every regular $r$-ideal of $H$ is an $r$-union of $r$-invertible $r$-ideals in Corollary~\ref{Cor 8.3}. Moreover, note that this condition is satisfied by every $r$-Marot monoid (i.e., a monoid for which every regular $r$-ideal is the $r$-closure of a set of regular elements). An $r$-ideal $I$ of $H$ is called radical if for each $n\in\mathbb{N}$ and $x\in H$ such that $x^n\in I$ it follows that $x\in I$. Moreover, $H$ is called an $r$-SP-monoid if every $r$-ideal is a finite $r$-product of radical $r$-ideals of $H$.

Furthermore, let $r$-$\max(H)$ denote the set of $r$-maximal $r$-ideals (resp. let $r$-$\max(H)_{\rm reg}$ denote the set of regular $r$-maximal $r$-ideals of $H$). Let $r$-$\max^{-1}(H)$ resp. $r$-$\max^{-1}(H)_{\rm reg}$ be the corresponding topological spaces equipped with the inverse topology (as defined in Definition~\ref{Def 6.1}). Moreover, set $\dim_r(H)=\dim\mathcal{I}_r(H)$.

\begin{theorem}\label{Thm 8.4} The following are equivalent for a modular finitary weak ideal system $r$ on $H$ such that each regular $r$-ideal of $H$ is an $r$-union of $r$-invertible $r$-ideals of $H$.
\begin{enumerate}
\item Every regular $r$-ideal of $H$ is an $r$-product of radical $r$-ideals.
\item Every regular prime $r$-ideal is $r$-maximal and each $r$-invertible $r$-ideal is an $r$-product of radical $r$-ideals.
\item Each regular prime $r$-ideal is $r$-maximal and contains an $r$-invertible radical $r$-ideal.
\item Each regular $r$-ideal is an $r$-product of (unique proper) radical $r$-ideals $J_1\subseteq\cdots\subseteq J_n$.
\item The radical of each regular $r$-finitely generated $r$-ideal is $r$-invertible.
\item The $C$-lattice $\mathcal{I}_r(H)_{\rm reg}$ is isomorphic to $U_b(r$-$\max^{-1}(H)_{\rm reg})$.
\item $H$ is an $r$-Pr\"ufer monoid and the radical of every $r$-finitely generated $r$-ideal is $r$-finitely generated.
\end{enumerate}
\end{theorem}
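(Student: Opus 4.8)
The plan is to apply Theorem~\ref{Thm 4.6}, Theorem~\ref{Thm 6.5} and Corollary~\ref{Cor 6.7} to the multiplicative lattice $\mathcal{L}:=\mathcal{I}_r(H)_{\rm reg}$ and then translate the resulting seven statements back into the language of $r$-ideals of $H$. The first task is to check that $\mathcal{L}$ meets the standing hypotheses of those results. It is a multiplicative lattice domain by the discussion preceding Lemma~\ref{Lem 8.1}; since $r$ is finitary it is a $C$-lattice by Lemma~\ref{Lem 8.1}; and since $r$ is modular and every regular $r$-ideal of $H$ is an $r$-union of $r$-invertible $r$-ideals, it is principally generated by Corollary~\ref{Cor 8.3}. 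Thus $\mathcal{L}$ is a principally generated $C$-lattice domain, as required.

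The substantive step is to set up a dictionary between the lattice-theoretic vocabulary for $\mathcal{L}$ and the $r$-ideal vocabulary for $H$. The nonzero elements of $\mathcal{L}$ are exactly the regular $r$-ideals, multiplication is $r$-multiplication, and join is $r$-union; by Lemma~\ref{Lem 8.1} the nonzero compact elements are exactly the regular $r$-finitely generated $r$-ideals. Since every $r$-ideal containing a regular $r$-ideal is again regular, the maximal elements of $\mathcal{L}$ are precisely the regular $r$-maximal $r$-ideals, so $X_\mathcal{L}$ is the space $r$-$\max^{-1}(H)_{\rm reg}$. By Lemma~\ref{Lem 8.2}(2),(3) (this is where modularity enters) the nonzero $\ell$-invertible elements of $\mathcal{L}$ are exactly the $r$-invertible $r$-ideals. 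It remains to identify the $\ell$-prime and $\ell$-radical elements: I would show that a regular $r$-ideal $P$ is $\ell$-prime in $\mathcal{L}$ if and only if it is a prime $r$-ideal, one direction being routine element chasing and the other using that for $ab\in P$ and a regular $z\in P$ one has $(\{a,z\}_r\{b,z\}_r)_r\subseteq P$ with $\{a,z\}_r,\{b,z\}_r\in\mathcal{L}$, so that $\ell$-primality of $P$ forces $a\in P$ or $b\in P$. Combining this with Lemma~\ref{Lem 2.7}(8) and the description of meets in $\mathcal{L}$, the element $\sqrt{I}$ computed in $\mathcal{L}$ equals the intersection of the prime $r$-ideals containing $I$, hence the radical of $I$; so the $\ell$-radical elements of $\mathcal{L}$ are exactly the radical regular $r$-ideals, together with the bottom element $\emptyset_r$ (which is $\ell$-radical since $\mathcal{L}$ is a domain). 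Finally, because every regular prime $r$-ideal is contained in a regular $r$-maximal $r$-ideal, $\dim\mathcal{L}\leq 1$ is equivalent to every regular prime $r$-ideal being $r$-maximal.

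With the dictionary in hand the translation is mechanical. Statement (1) is exactly ``$\mathcal{L}$ is a radical factorial lattice'', i.e.\ Theorem~\ref{Thm 4.6}(1); statements (2), (3), (4), (5) correspond term by term to Theorem~\ref{Thm 4.6}(2), (3), (4), (5), with the uniqueness assertion in (4) supplied by Corollary~\ref{Cor 6.7}; statement (7) corresponds to Theorem~\ref{Thm 4.6}(6), once one observes (via Lemmas~\ref{Lem 8.1} and~\ref{Lem 8.2}) that ``$H$ is an $r$-Pr\"ufer monoid'' says precisely that every nonzero compact element of $\mathcal{L}$ is $\ell$-invertible; and statement (6) is the conclusion of Theorem~\ref{Thm 6.5}, noting that $U_b(X)$ is itself always radical factorial by Theorem~\ref{Thm 5.3}, so that $\mathcal{L}\cong U_b(r$-$\max^{-1}(H)_{\rm reg})$ holds if and only if $\mathcal{L}$ is radical factorial. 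I expect the main obstacle to be the dictionary itself --- in particular, verifying that the $\ell$-prime and $\ell$-radical elements of the \emph{sublattice} $\mathcal{I}_r(H)_{\rm reg}$, rather than of the full lattice $\mathcal{I}_r(H)$ (which need not even be a multiplicative lattice domain), are the ones described above, and keeping careful track of the formal bottom element $\emptyset_r$ throughout.
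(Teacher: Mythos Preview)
Your approach is correct and coincides with the paper's own proof, which simply cites Theorems~\ref{Thm 4.6}, \ref{Thm 5.3}, \ref{Thm 6.5}, Corollaries~\ref{Cor 6.7}, \ref{Cor 8.3} and Lemma~\ref{Lem 8.1} without spelling out the dictionary. You have supplied precisely the translation steps the paper leaves implicit, and no genuinely different idea is involved.
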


\begin{proof} This is an immediate consequence of Theorems~\ref{Thm 4.6}, \ref{Thm 5.3} and \ref{Thm 6.5}, Corollaries~\ref{Cor 6.7} and \ref{Cor 8.3} and Lemma~\ref{Lem 8.1}.
\end{proof}

\begin{corollary}\label{Cor 8.5} The following are equivalent if $H$ is cancellative and $r$ is a modular finitary ideal system on $H$.
\begin{enumerate}
\item $H$ is an $r$-SP-monoid.
\item $\dim_r(H)\leq 1$ and each $r$-invertible $r$-ideal is an $r$-product of radical $r$-ideals.
\item $\dim_r(H)\leq 1$ and each nontrivial prime $r$-ideal contains an $r$-invertible radical $r$-ideal.
\item Each (nontrivial) $r$-ideal is an $r$-product of (unique proper) radical $r$-ideals $J_1\subseteq\cdots\subseteq J_n$.
\item The radical of each nontrivial $r$-finitely generated $r$-ideal is $r$-invertible.
\item The $C$-lattice $\mathcal{I}_r(H)$ is isomorphic to $U_b(r$-$\max^{-1}(H))$.
\item $H$ is an $r$-Pr\"ufer monoid and the radical of every $r$-finitely generated $r$-ideal is $r$-finitely generated.
\end{enumerate}
\end{corollary}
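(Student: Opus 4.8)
The plan is to deduce Corollary~\ref{Cor 8.5} from Theorem~\ref{Thm 8.4} by showing that, under the present hypotheses, the lattice $\mathcal{I}_r(H)$ of all $r$-ideals coincides with the lattice $\mathcal{I}_r(H)_{\rm reg}$ occurring in Theorem~\ref{Thm 8.4}, and that the standing hypothesis of that theorem---every regular $r$-ideal is an $r$-union of $r$-invertible $r$-ideals---holds automatically here. The key first step is the observation that, because $r$ is an ideal system (not merely a weak one), $cX_r=(cX)_r$ for all $c\in H$ and $X\subseteq H$; taking $X=\{1\}$ and using $\{1\}_r=H$ gives $\{x\}_r=xH$ for every $x\in H$. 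In particular every cancellative element of $H$ is regular in the sense of Section~8, so, since $H$ is cancellative, every element of an $r$-ideal $I$ outside $z(H)$ is regular. Consequently the nontrivial $r$-ideals of $H$ are exactly the regular $r$-ideals, and since $z(H)\subseteq I$ for every $r$-ideal $I$ while $z(H)$ is itself the bottom element $\emptyset_r$, we obtain $\mathcal{I}_r(H)=\mathcal{I}_r(H)_{\rm reg}$ as multiplicative lattices. Moreover each nontrivial $r$-ideal $I$ satisfies $I=\big(\bigcup_{x\in I\setminus z(H)}xH\big)_r$, and each $xH$ with $x$ cancellative is $r$-invertible because $(xH\cdot H)_r=(xH)_r=xH$ with $x$ regular; this verifies the standing hypothesis of Theorem~\ref{Thm 8.4} and, via Lemma~\ref{Lem 8.2}(2) (modularity of $r$) or Corollary~\ref{Cor 8.3}, shows that $\mathcal{I}_r(H)$ is a principally generated $C$-lattice domain---it is a $C$-lattice by Lemma~\ref{Lem 8.1} since $r$ is finitary, and a lattice domain by the discussion preceding that lemma.

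With these identifications, one applies Theorem~\ref{Thm 8.4} and translates its seven statements into those of Corollary~\ref{Cor 8.5}. Under $\mathcal{I}_r(H)=\mathcal{I}_r(H)_{\rm reg}$, ``regular $r$-ideal'' becomes ``nontrivial $r$-ideal'', the spaces $r$-$\max^{-1}(H)_{\rm reg}$ and $r$-$\max^{-1}(H)$ coincide, and statement~(1) of Theorem~\ref{Thm 8.4} becomes the assertion that every nontrivial $r$-ideal is an $r$-product of radical $r$-ideals---equivalent to $H$ being an $r$-SP-monoid, since the remaining $r$-ideal $z(H)$ is radical in a cancellative monoid (if $x^n\in z(H)$ then $x\in z(H)$) and hence is trivially such a product. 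Finally ``every regular prime $r$-ideal is $r$-maximal'' translates to $\dim_r(H)\le 1$: in the lattice domain $\mathcal{I}_r(H)$ the bottom element $\emptyset_r$ is the unique minimal $\ell$-prime and lies below every prime $r$-ideal, while the $\ell$-prime elements different from $\emptyset_r$ are precisely the nontrivial (equivalently regular) prime $r$-ideals, so a chain of $\ell$-prime elements of length three exists if and only if some regular prime $r$-ideal fails to be $r$-maximal. Statements (2), (3), (5) and (7) then match their counterparts in Theorem~\ref{Thm 8.4} verbatim under this dictionary, and (6) matches because the two underlying spaces coincide.

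The main obstacle I anticipate is not any single deep argument but the care required in the bookkeeping around the bottom element and the set $z(H)$ of zero elements: one must verify that $\mathcal{I}_r(H)$ and $\mathcal{I}_r(H)_{\rm reg}$ genuinely coincide as multiplicative lattices (including agreement of their meets), that $z(H)$ is $r$-closed, $\ell$-prime, and a radical $r$-ideal, and that the dimension translation correctly accounts for $\emptyset_r$ being the least $\ell$-prime of the domain. Once the dictionary ``nontrivial $\leftrightarrow$ regular'' together with ``$\dim_r(H)\le 1\leftrightarrow$ every regular prime $r$-ideal is $r$-maximal'' has been established, each of the seven equivalences in Corollary~\ref{Cor 8.5} is a restatement of the corresponding equivalence in Theorem~\ref{Thm 8.4}.
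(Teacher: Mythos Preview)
Your proposal is correct and follows essentially the same route as the paper's proof: observe that for an ideal system $r$ on a cancellative monoid each principal ideal $xH$ with $x\notin z(H)$ is $r$-invertible, deduce that every nontrivial $r$-ideal is regular and is an $r$-union of $r$-invertible $r$-ideals, and then invoke Theorem~\ref{Thm 8.4}. You supply more of the dictionary (the identification $\mathcal{I}_r(H)=\mathcal{I}_r(H)_{\rm reg}$, the translation of ``regular prime $r$-maximal'' into $\dim_r(H)\le 1$, and the radicality of $z(H)$) than the paper makes explicit, but the underlying argument is the same.
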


\begin{proof} Observe that $xH$ is an $r$-invertible $r$-ideal of $H$ for every cancellative element $x\in H$. Let $I$ be a nontrivial $r$-ideal of $H$. Clearly, $I$ is regular and $I=(\bigcup_{x\in I\setminus z(H)} xH)_r$. The statement now follows by Theorem \ref{Thm 8.4}.
\end{proof}

Let $R$ be an integral domain. An ideal system $r$ on $R$ is called a star operation on $R$ if every $r$-ideal of $R$ is a (ring) ideal of $R$. A star operation on $R$ is called of finite type if it is finitary (as an ideal system). A star operation is called modular if it is modular as an ideal system. Note that this concept of star operation differs from the classical notion of star operation, but it leads to the same monoid of star ideals as the ``classical star operations''. More precisely, if $r$ is a star operation on $R$ and $\mathcal{F}(R)$ denotes the set of nonzero fractional ideals of $R$, then $*:\mathcal{F}(R)\rightarrow\mathcal{F}(R)$ defined by $X_*=c^{-1}(cX)_r$ for all $X\in\mathcal{F}(R)$ and nonzero $c\in R$ such that $cX\subseteq R$ is a ``classical star operation'' on $R$ and $\mathcal{I}_r(R)=\{I\in\mathcal{F}(R)\mid I\subseteq R,I_*=I\}\cup\{(0)\}$. Conversely if $*:\mathcal{F}(R)\rightarrow\mathcal{F}(R)$ is a ``classical star operation'' on $R$, then $r:\mathbb{P}(R)\rightarrow\mathbb{P}(R)$ defined by $X_r=({}_R(X))_*$ if $X\nsubseteq\{0\}$ and by $X_r=\{0\}$ if $X\subseteq\{0\}$, then $r$ is a star operation on $R$. If $*$ is a star operation of finite type, then we say that $R$ is a P$*$MD if every nonzero $*$-finitely generated $*$-ideal of $R$ is $*$-invertible. We say that $R$ is a $*$-SP-domain if every $*$-ideal of $R$ a finite $*$-product of radical $*$-ideals of $R$.

\begin{corollary}\label{Cor 8.6} The following are equivalent for an integral domain $R$ and a modular star operation $*$ on $R$ of finite type.
\begin{enumerate}
\item $R$ is a $*$-SP-domain.
\item $\dim_*(R)\leq 1$ and every $*$-invertible $*$-ideal of $R$ is a finite $*$-product of radical $*$-ideals of $R$.
\item Each nonzero prime $*$-ideal is maximal and contains a $*$-invertible radical $*$-ideal.
\item Each $*$-ideal is a product of (unique) radical $*$-ideals $J_1\subseteq\cdots\subseteq J_n$.
\item The radical of each nonzero $*$-finitely generated $*$-ideal is $*$-invertible.
\item The $C$-lattice $\mathcal{I}_*(R)$ is isomorphic to $U_b(*$-$\max^{-1}(R))$.
\item $R$ is a P$*$MD for which the radical of each $*$-finitely generated $*$-ideal is $*$-finitely generated.
\end{enumerate}
\end{corollary}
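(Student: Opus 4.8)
The plan is to deduce Corollary~\ref{Cor 8.6} from Corollary~\ref{Cor 8.5} by applying the latter to the multiplicative monoid $H=(R,\cdot)$ of the domain $R$ with ideal system $r=*$. In the terminology of Section~8 a star operation on $R$ is by definition a particular ideal system on $R$, that is, on the monoid $(R,\cdot)$, so the standing hypotheses of Corollary~\ref{Cor 8.5} are met at once: $H$ is cancellative because $R$ is a domain (so $z(H)=\{0\}$ and every nonzero element is cancellative), and the assumption that $*$ is a modular star operation of finite type on $R$ says precisely that $r=*$ is a modular finitary ideal system on $H$. One should also record, for later use, that $\mathcal{I}_*(R)$, the lattice of integral $*$-ideals of $R$ together with the zero ideal, is exactly the lattice $\mathcal{I}_r(H)$ of $r$-ideals of $H$, with $*$-product $(I,J)\mapsto (IJ)_*$ as its monoid multiplication.

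The substance of the proof is the translation of the notions appearing in Corollary~\ref{Cor 8.5} into their domain-theoretic counterparts. Because $R$ is a domain, every nonzero $x\in R$ is regular (the identity $(xA)_*=xA_*$ being part of the star operation axioms), so a regular, equivalently nontrivial, $r$-ideal of $H$ is exactly a nonzero $*$-ideal of $R$; the $r$-invertible $r$-ideals of $H$ are the $*$-invertible $*$-ideals of $R$; an $r$-ideal is radical in the monoid sense precisely when it is a radical $*$-ideal of $R$; and $xR$ is $*$-invertible for every nonzero $x$, so every nontrivial $r$-ideal is automatically an $r$-union of $r$-invertible $r$-ideals. Moreover, since $(0)$ is a prime ideal of the domain $R$ and $\mathcal{I}_*(R)$ is a $C$-lattice by Lemma~\ref{Lem 8.1}, the inequality $\dim_*(R)\le 1$ is equivalent to the assertion that every nonzero prime $*$-ideal of $R$ is maximal. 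Finally, ``$H$ is an $r$-Pr\"ufer monoid'' unwinds to ``$R$ is a P$*$MD'', and $r\text{-}\max^{-1}(H)$ is literally $*\text{-}\max^{-1}(R)$. Feeding these translations into Corollary~\ref{Cor 8.5} turns its hypotheses and its seven conditions into the hypotheses and the seven conditions of Corollary~\ref{Cor 8.6}, the uniqueness of the factorization in (4) (via Corollary~\ref{Cor 6.7}) and the isomorphism $\mathcal{I}_*(R)\cong U_b(*\text{-}\max^{-1}(R))$ in (6) included.

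There is no genuinely hard step here; the only thing that demands care is this dictionary. The most delicate point is the equivalence of $\dim_*(R)\le 1$ with ``every nonzero prime $*$-ideal of $R$ is maximal'': this uses that $(0)$ is prime in a domain and that every maximal $*$-ideal is a prime $*$-ideal, the latter holding because $*$ is of finite type, so that $\mathcal{I}_*(R)$ is a $C$-lattice in which maximal elements are $\ell$-prime. One must also take a moment to confirm that the monoid-theoretic notions of $r$-invertibility, of an $r$-Pr\"ufer monoid, and of an $r$-union coincide with $*$-invertibility, P$*$MD, and the $*$-closure of a union, respectively. Once these identifications are settled, every part of the statement --- the equivalences, the uniqueness in (4), and the topological description in (6) --- follows immediately from Corollary~\ref{Cor 8.5}, and hence ultimately from Theorems~\ref{Thm 4.6}, \ref{Thm 5.3} and \ref{Thm 6.5} and Corollary~\ref{Cor 6.7}.
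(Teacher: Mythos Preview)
Your proposal is correct and follows exactly the approach the paper takes: the paper's proof is the single line ``This follows from Corollary~\ref{Cor 8.5},'' and you have simply written out the dictionary that makes this citation meaningful. Your additional care in verifying that $\dim_*(R)\le 1$ is equivalent to every nonzero prime $*$-ideal being $*$-maximal, and that the monoid notions translate correctly, is helpful elaboration but not a different argument.
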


\begin{proof} This follows from Corollary~\ref{Cor 8.5}.
\end{proof}

\subsection*{Acknowledgements} We would like to thank T. Dumitrescu for his careful reading and for many useful comments and remarks that helped to simplify and shorten several proofs of this paper. We also want to thank the referee for his/her suggestions and comments.


\begin{thebibliography}{10}
\bibitem{AD} M. T. Ahmed, T. Dumitrescu, {SP-rings with zero-divisors}, Comm. Algebra {\bf 45} (2017), no. 10, 4435--4443.
\bibitem{And} D. D. Anderson, Abstract commutative ideal theory without chain conditions, Algebra Univ. {\bf 6} (1976), 131--145.
\bibitem{AJo} D. D. Anderson and E. W. Johnson, Dilworth's principal elements, Algebra Univ., {\bf 36} (1996), 392--404.
\bibitem{AJ} D. D. Anderson and C. Jayaram, Principal element lattices, Czech. Math. J., Vol. {\bf 46} (1996), No. 1, 99--109.
\bibitem{AP} D. D. Anderson and J. Pascual, Ideals in commutative rings, sublattices of regular Ideals, and Pr\"ufer rings, J. Algebra {\bf 111} (1987), 404--426.
\bibitem{Dil} R. P. Dilworth, Abstract commutative ideal theory, Pacific J. Math. {\bf 12} (1962), 481--498.
\bibitem{FHL} M. Fontana, E. Houston, T. Lucas, {Factoring Ideals in Integral domains}, Lecture Notes of the Unione Mathematica Italiana, Springer-Verlag Berlin Heidelberg, 2013.
\bibitem{HA} F. Halter-Koch, {\it Ideal Systems. An Introduction to Multiplicative Ideal Theory}, Marcel Dekker, New York, 1998.
\bibitem{HOR} O. A. Heubo-Kwegna, B. Olberding, A. Reinhart, {Group-theoretic and topological invariants of completely integrally closed Pr\"ufer domains}, J. Pure Appl. Algebra {\bf 220} (2016), no. 12, 3927--3947.
\bibitem{JJA} C. Jayaram and E.G. Johnson, Almost principal element lattices, Internat. J. Math. \& Math. Sci. {\bf 18} (1995), 535--538.
\bibitem{JJSP} C. Jayaram and E.G. Johnson, $S$-prime elements in multiplicative lattices, Periodica Math. Hungarica {\bf 31} (1995), pp. 201--208.
Principal Elements of Lattices of Ideals
\bibitem{KZ} M. Knebusch and D. Zhang. Manis Valuations and Pr\"ufer Extensions I: A New Chapter in Commutative Algebra. Berlin, Heidelberg: Springer Berlin Heidelberg, 2002.
\bibitem{McC} P. J. McCarthy, Principal elements of lattices of ideals, Proc. Amer. Math. Soc., {\bf 30} (1971), 43--45.
\bibitem{MC} W. W. McGovern, {B\'ezout SP-domains}, Comm. Algebra {\bf 35} (2007), 1777--1781.
\bibitem{Olb} B. Olberding, {Factorization into radical ideals}, in Arithmetical properties of commutative Rings and monoids (S. Chapman, editor), Lect. Notes in Pure Appl. Math. Vol 241. Chapman \& Hall, 363--377, 2005.
\bibitem{R} A. Reinhart, {Radical factorial monoids and domains}, Ann. Sci. Math. Qu\'ebec {\bf 36} (2012), no.~1, 193--229.
\bibitem{RR} A. Reinhart, {On monoids and domains whose monadic submonoids are Krull}, in Commutative Algebra-Recent Advances in Commutative Rings, Integer-valued Polynomials, and Polynomial Functions, M. Fontana, S. Frisch, and S. Glaz, eds., Springer, 2014.
\bibitem{V} N. H. Vaughan, R. W. Yeagy, {Factoring ideals into semiprime ideals}, Canad. J. Math. {\bf 30} (1978), 1313--1318.
\bibitem{Y} R. W. Yeagy, {Semiprime factorizations in unions of Dedekind domains}, J. Reine Angew. Math. {\bf 310} (1979), 182--186.
\end{thebibliography}
\end{document}